\tikzstyle{stpoint} =
\tikzstyle{stsmallpoint} =
\newcommand{\point}[1]{\node[circle,inner sep = 0pt,minimum size =0pt]}
\newcommand{\constant}{19}
\newtheorem{lemma}{Lemma}
\newtheorem{proposition}{Proposition}
\newtheorem{theorem}{Theorem}
\newtheorem{corollary}{Corollary}
\newtheorem{claim}{Claim}
\newtheorem{claimprop3}{Claim}
\theoremstyle{definition}
\newtheorem{remark}{Remark}
\newtheorem{question}{Question}[section]
   \newcommand{\commentout}[1]{}
\DeclareMathOperator{\Sh}{Sh}
\DeclareMathOperator{\St}{Sh}
\DeclareMathOperator{\Sp}{C}
\begin{document}

\thispagestyle{empty}
\centerline{\Large\bf Packing and covering with balls on Busemann surfaces
  \footnote{The final publication is available at Springer via \url{http://dx.doi.org/10.1007/s00454-017-9872-0}} 
}

\vspace{10mm}

\centerline{{\sc Victor Chepoi}, {\sc Bertrand Estellon}, and {\sc Guyslain Naves}}

\vspace{3mm}

\date{\today}

\medskip
\begin{small}
\centerline{Aix Marseille Université, CNRS, LIF UMR 7279, 13288, Marseille, France}

\centerline{\texttt{\{victor.chepoi, bertrand.estellon, guyslain.naves\}@lif.univ-mrs.fr}}
\end{small}

\bigskip\bigskip\noindent {\footnotesize {\bf Abstract.} In this note
we prove that for any compact subset $S$ of a Busemann surface
$({\mathcal S},d)$ (in particular, for any simple polygon with
geodesic metric) and any positive number $\delta$, the minimum number
of closed balls of radius $\delta$ with centers at $\mathcal S$ and
covering the set $S$ is at most \constant{} times the maximum number of
disjoint closed balls of radius $\delta$ centered at points of $S$:
$\nu(S)\le \rho(S)\le \constant{}\nu(S)$, where $\rho(S)$ and $\nu(S)$ are the
covering and the packing numbers of $S$ by ${\delta}$-balls.  Busemann surfaces
represent a far-reaching generalization not only of simple polygons,
but also of Euclidean and hyperbolic planes and of all planar
polygonal complexes of global non-positive curvature. Roughly
speaking, a Busemann surface is a geodesic metric space homeomorphic
to ${\mathbb R}^2$ in which the distance function is convex.}

\section{Introduction}

The set packing and the set covering problems are classical questions
in computer science~\cite{Va}, combinatorics~\cite{Be}, and
combinatorial optimization~\cite{Co,Sch}. Packing and covering
problems in ${\mathbb R}^d$ with special geometric objects have been
also actively investigated in computational geometry
~\cite{AgMu,BrGo,ChHP,MuRa} and in discrete geometry~\cite{Bo,PaAg}.
Finally, the covering and packing problems of arbitrary metric spaces
with balls (which is the subject of the current paper) have been
formulated in the middle of 20th century in pure mathematics
~\cite{KoTi}. The respective covering and packing numbers capture the
size of the underlying metric space and play a central role in several
areas of pure and applied mathematics: information theory, functional
analysis, probability theory, statistics, and learning theory
~\cite{Du,KuMi,Lo}.

In the {\it set covering problem}, given a collection $\mathcal F$ of
subsets of a (finite or infinite) domain $X$, the task is to find a
subcollection of $\mathcal F$ of minimum size $\rho({\mathcal F})$
whose union is $X.$ The {\it set packing problem} asks to find a
maximum number $\nu({\mathcal F})$ of pairwise disjoint subsets of
$\mathcal F$. Another problem closely related to set covering is the
hitting set problem. A subset $T$ is called a {\it hitting set} of
$\mathcal F$ if $T\cap S\ne\varnothing$ for any $S\in {\mathcal F}.$ The
{\it minimum hitting set problem} asks to find a hitting set of
$\mathcal S$ of smallest cardinality $\tau ({\mathcal F}).$ All these
three problems are $NP$-hard, moreover, they are difficult to
approximate within a constant factor unless $P=NP$.  In case when $X$
is a metric space and $\mathcal F$ is the set of its balls of equal
radii, then the minimum covering and the
minimum hitting set problems are equivalent, i.e., $\rho({\mathcal
F})=\tau({\mathcal F})$. Indeed, the centers of balls in any covering of $X$
define a hitting set of  $\mathcal F$ and vice-versa, given a hitting set $T$ of
$\mathcal F$ one can define a covering of $X$ of the same size by considering the
balls centered at the points of $X$.

The inequality $\tau({\mathcal F})\ge \nu({\mathcal F})$ holds for any
family of sets $\mathcal F$ on any domain $X$: any two sets from a
packing cannot be hit by the same point of $X$.  Of particular
importance are the families of sets $\mathcal F$ for which there
exists a universal constant $c:=c({\mathcal F})$ such that
$\tau({\mathcal F}')\le c\nu({\mathcal F}')$ holds for any subfamily
${\mathcal F}'$ of $\mathcal F$.  In general, proving that for all
subfamilies of a particular family of sets $\mathcal F$ such a
universal constant $c$ exists is a notoriously difficult problem and
it is open for many simple particular cases.  For example, in 1965,
Wegner~\cite{We} asked if for the family $\mathcal R$ of all
axis-parallel rectangles in ${\mathbb R}^2$ it is always true that
$\tau({\mathcal R})\le 2\nu({\mathcal R})-1$ (Gy\'{a}rf\'{a}s and
Lehel~\cite{GuLe} relaxed this question by asking if $\tau({\mathcal
R})\le c\nu({\mathcal R})$ for a universal constant $c$).

We briefly review now some families $\mathcal F$ for which the
inequality $\tau({\mathcal F})\le c\nu({\mathcal F})$ holds (when
$\mathcal F$ is a family of balls in a metric space some known results
will be reviewed in the next section).  The equality $\tau({\mathcal
F})=\nu({\mathcal F})$ holds if $\mathcal F$ is an interval
hypergraph, a hypertree, and more generally, a normal hypergraph
~\cite{Be,Sch}. Covering and packing problems for special families of
subtrees of a tree have been considered in~\cite{BaEdWo, Sch}. Alon
~\cite{Al1,Al} established that if ${\mathcal F}$ is a family of
$\kappa$-intervals (i.e., unions of at most $\kappa$ intervals) of the line (or a
family consisting of unions of at most $\kappa$ subtrees of a tree), then
$\tau({\mathcal F})\le 2\kappa^2\nu({\mathcal F}).$ A
similar result has been obtained in~\cite{ChEs} for unions of $\kappa$
balls in a geodesic $\delta$-hyperbolic space.  Gy\'{a}rf\'{a}s and
Lehel's relaxation of Wegner's conjecture was confirmed in
~\cite{ChFe,CoFePe-LaSo} for families of axis-parallel rectangles
intersecting a common monotone curve. One common feature of all these
results is that the inequality $\tau({\mathcal F})\le c\nu({\mathcal
F})$ is established by constructing in a primal-dual way a hitting set
$T$ and a packing ${\mathcal P}\subseteq {\mathcal F}$ such that
$|T|\le c|{\mathcal P}|$.  Consequently, this provides a factor $c$
approximation algorithm for hitting set and packing problems for
$\mathcal F$.

In this note, we consider the problem of covering and packing by balls
of equal radii of subsets of Busemann surfaces. Using a similar
approach as above, we prove that the minimum number of closed balls of
radius $\delta$ required to cover a compact subset $S$ of a Busemann
surface $({\mathcal S},d)$ is at most \constant{} times the maximum number of
pairwise disjoint closed balls of radius $\delta$ with centers in
$S$. Our initial motivation was to establish that such an inequality
holds for simple polygons with geodesic metric. Busemann surfaces
represent a far-reaching generalization not only of simple polygons,
but also of Euclidean and hyperbolic planes and of all planar
polygonal complexes of global non-positive curvature. Roughly
speaking, a Busemann surface is a geodesic metric space homeomorphic
to ${\mathbb R}^2$ in which the distance function is convex~\cite{Pa}.

\section{Preliminaries and main results}

In this section, we recall all necessary definitions and results
related to the subject of this paper.  We start with a subsection in which
we recall some definitions, characterizations,  and notations on
geodesic metric spaces, Busemann spaces, and Busemann surfaces. We continue with
two subsections, one dedicated to basic notions and notations about covering and
packing problems, and the second one to some known results on covering
and packing metric spaces and graphs with balls.  We conclude the section
with the formulation of the main results.

\subsection{Busemann surfaces}

We start with definitions of geodesics and geodesic metric spaces, in
which we follow \cite[Chapter I.1]{BrHa} and \cite[Chapter 2]{Pa}.
Let $(X,d)$ be a metric space.  A {\it geodesic path} joining
$x \in X$ to $y \in X$ is a map $\gamma$ from the closed interval
$[0,l] \subset {\mathbb R}$ to $X$ such that
$\gamma(0) = x, \gamma(l) = y$ and
$d(\gamma(t),\gamma(t')) = |t - t'|$ for all $t,t' \in [0,l]$ (in
particular, $l = d(x,y)$).  The image of $\gamma$ is called a {\it
  geodesic segment} (or a {\it geodesic}) with endpoints $x$ and $y$.
Let $[a,b] \subset {\mathbb R}$ be an interval. A map
$\gamma: [a,b] \rightarrow X$ is said to be an {\it affine
  reparametrized geodesic} or a {\it constant speed geodesic}, if
there exists a constant $\lambda$ such that
$d(\gamma(t),\gamma(t')) = \lambda \cdot |t-t'|$ for all
$t,t'\in [a,b]$.

The definitions of (geodesic) lines and (geodesic) rays are similar to
that of geodesic segment: a \emph{geodesic line} (\emph{resp.}
\emph{geodesic ray}) $\gamma$ is a map from $I := \mathbb{R}$
(\emph{resp.} $I := [0,\infty)$) to $X$ such that for all
$t, t' \in I$, $d(\gamma(t),\gamma(t')) = |t - t'|$. We will refer to
the image of $\gamma$ as a geodesic line or geodesic ray. A {\it local
  geodesic} is a map $\gamma$ from an interval
$I \subseteq {\mathbb R}$ to $X$ such that for every $t \in I$ there
exists $\epsilon > 0$ such that the restriction of $\gamma$ on
$I \cap [t-\epsilon,t+\epsilon]$ is geodesic.

A metric space $X$ is said to be a {\it geodesic metric space} if
every pair of points in $X$ can be joined by a geodesic. A {\it
  uniquely geodesic space} is a geodesic space in which every pair of
points can be joined by a unique geodesic.

We continue with the definition of Busemann spaces; we follow
\cite[Chapter 8]{Pa}.  A {\it Busemann space} (or a {\it
  non-positively curved space in the sense of Busemann}) is a geodesic
metric space $(X,d)$ in which the distance function between any two
geodesics is convex: for all affinely reparametrized geodesics
$\gamma: [0,1]\rightarrow X$ and $\gamma': [0,1]\rightarrow
X$, we have for all $t\in [0,1]$,
$d(\gamma(t),\gamma'(t))\le (1-t)\cdot d(\gamma(0),\gamma'(0))+t\cdot d(\gamma(1),\gamma'(1))$.
Equivalently, $(X,d)$ is a Busemann space if for any two affinely
reparametrized geodesics $\gamma: [a,b]\rightarrow X$ and
$\gamma': [a',b']\rightarrow X$ the map
$f_{\gamma,\gamma'}(t):[0,1]\rightarrow {\mathbb R}$ defined by
$f_{\gamma,\gamma'}(t)=d(\gamma((1-t)a+tb),\gamma'((1-t)a'+tb'))$ is a
convex function.  We continue by recalling the following fundamental
characterizations of Busemann surfaces (they constitute a part of
\cite[Proposition 8.1.2]{Pa}):

\medskip\noindent
{\bf Proposition A}. \label{thales} \cite[Proposition 8.1.2(ii)\&(v)\&(vi)]{Pa}
{\it For a geodesic metric space $(X,d)$, the following conditions are equivalent:

\begin{itemize}
\item[(i)] $X$ is a Busemann space;

\item[(ii)] Let $\gamma : [0,l] \to X$ and $\gamma' : [0,l'] \to X$ be
  two arbitrary geodesics in $X$. For every $t \in [0,1]$,
  $d(\gamma(t \cdot l),\gamma'(t \cdot l'))
   \le (1-t) \cdot d(\gamma(0),\gamma'(0)) + t \cdot d(\gamma(l),\gamma'(l'));$

 \item[(iii)] Let $\gamma : [0,l] \to X$ and $\gamma' : [0,l'] \to X$
   be two arbitrary geodesics of $X$.  Then
   $d\left(\gamma\left(\frac{l}{2}\right),\gamma'\left(\frac{l'}{2}\right)\right)
    \le \frac{1}{2}(d(\gamma(0),\gamma'(0)) + d(\gamma(l) + \gamma'(l')));$

\item[(iv)] Let $\gamma : [0,l] \to X$ and
  $\gamma' : [0,l'] \to X$ be two arbitrary geodesics of
  $X$ having a common initial point $\gamma(0) = \gamma'(0)$. For all
  $t \in [0,1]$,
  $d(\gamma(t \cdot l),\gamma'(t \cdot l'))
   \le t \cdot d(\gamma(l),\gamma'(l')).$
\end{itemize}
}

Busemann spaces satisfy many fundamental metric, geometric, and
topological properties: they are contractible, have the fixed point
property, are uniquely geodesic, local geodesics are geodesics, open
and closed balls are convex, projections on convex sets are unique,
and geodesics vary continuously with their endpoints.  They can be
characterized in a pretty local-to-global way: every complete geodesic
locally compact, locally convex and simply connected metric space is a
Busemann space. For these and other results on Busemann spaces consult
the book of Papadopoulos~\cite{Pa}.

Basic examples of Busemann spaces are the Euclidean space ${\mathbb
E}^n$, and more generally, normed strictly convex vector spaces, the
hyperbolic $n$-dimensional space ${\mathbb H}^n$, ${\mathbb R}$-trees,
and Riemannian manifolds of global nonpositive sectional curvature.  A
large subclass of Busemann spaces is constituted by non-positively
curved spaces in the sense of Alexandrov, known also under the name of
CAT(0) spaces~\cite{BrHa}.

A {\it planar surface} (without boundary) $\mathcal S$ is a
2-dimensional manifold homeomorphic to the plane ${\mathbb R}^2$.  A
geodesic metric space $({\mathcal S},d)$ is called a {\it Busemann
  surface} if $\mathcal S$ is a 2-dimensional manifold and the metric
space $({\mathcal S},d)$ is a Busemann space. Since Busemann spaces
are contractible (by convexity of the distance function), each
Busemann surface is a planar surface.

Particular instances of Busemann surfaces are
non-positively curved piecewise-Euclidean (PE) (or piecewise
hyperbolic) planar complexes without boundary. In fact, as is shown in
\cite[Subsection 2.4]{ChChNa}, any finite non-positively curved
planar complex can be extended to a Busemann surface. Recall that a {\it
planar PE complex} $X$ is obtained from a (not necessarily finite)
planar graph $G$ by replacing each inner face of $G$ having $n$ sides
by a convex $n$-gon in the Euclidean plane. The planar PE complex $X$ is
called a {\it non-positively curved planar complex} if the sum of angles
around each inner vertex of $G$ is at least $2\pi$.  Equivalently, by
\cite[Theorem 5.4]{BrHa} $X$ is  non-positively curved if and only if $X$
endowed with the intrinsic $l_2$-metric $d$ is  uniquely geodesic, or,
equivalently, is a
Busemann (or a CAT(0)) space.

Our motivating examples of Busemann surfaces are the simple polygons
$P$ in the plane endowed with the intrinsic geodesic metric. After
triangulating $P$, one can view $P$ as a finite non-positively curved
planar complex and, as noticed in \cite{ChChNa}, $P$ can be extended
to a Busemann surface $\mathcal S$ so that $P$ will be a convex subset
of $\mathcal S$.

To embed a finite non-positively curved planar complex $X$ (or a
triangulated simple polygon) into a Busemann surface $\mathcal S$, to
each boundary edge $e$ of $X$ we add a closed halfplane $H_e$ of
${\mathbb R}^2$ so that $e$ is a segment of the boundary of $H_e$. If
two boundary edges $e,e'$ of $X$ share a common endvertex $x,$ then
$H_e$ and $H_e'$ will be glued along the rays of their boundaries
emanating from $x$ which are disjoint from $e$ and $e'$.  It can be
easily seen that the resulting planar surface $\mathcal S$ is CAT(0)
and that $X$ isometrically embeds into $\mathcal S$.

Several elementary properties of geodesic lines and convex sets in
Busemann planar surfaces have been presented in \cite{ChChNa}. In our
proofs we use some of these properties (convexity of cones and
triangles, Pasch and Peano axioms, geodesic extension property), which
will be recalled together with some basic properties of Busemann
spaces (convexity of balls, local geodesics are geodesics) in
Subsection \ref{subsection:auxiliary}.  Our proofs require some other
properties of convexity and distance function in Busemann surfaces,
which will be established in Subsection \ref{subsection:auxiliary}:
monotonicity of perimeters of triangles, convexity preserves diameters
of sets, Helly theorem, convexity of shades of geodesic segments and
of triangles, line-separation of a triangle and a point not belonging
to this triangle, to mention some of them.

\subsection{Covering and packing with balls}

Let $(X,d)$ be a metric space, $S$ be a subset of $X$, and 
${\delta}$ be an arbitrary positive real number. For a point 
$x\in X$, we will denote by
$B_{\delta}(x)=\{ y\in X: d(x,y)\le {\delta}\}$ and
$B_{\delta}^{\circ}(x)=\{ y\in X: d(x,y)< {\delta}\}$ the {\it closed}
and the {\it open balls} of radius ${\delta}$ and center $x$.  A {\it
${\delta}$-simplex} is a subset $Y$ of $X$ of diameter at most
$2{\delta}$, i.e., $d(x,y)\le 2{\delta}$ for any $x,y\in Y$  The {\it Rips} 
(or the {\it Vietoris-Rips}) {\it complex} $P_{\delta}(S)$ of $S$
\cite[p.468]{BrHa} is a simplicial complex whose vertices are the
points of $S$ and a subset $Y\subseteq S$ is a simplex of
$P_{\delta}(S)$ if and only if diam$(Y)\le {\delta}$, i.e., if $Y$ is
a $\frac{{\delta}}{2}$-simplex. Denote by $G_{\delta}(S)$ the
1-skeleton of $P_{\delta}(S)$, i.e., $S$ is the vertex-set of
$G_{\delta}(S)$ and $x,y$ are adjacent in $G_{\delta}(S)$ if and only
if the pair $x,y$ defines a simplex of $P_{\delta}(S)$, i.e.,
$d(x,y)\le {\delta}$. Notice that $P_{\delta}(S)$ is the clique
complex of $G_{\delta}(S)$. Finally, let $\overline{G}_{\delta}(S)$ 
denote the complement of the graph $G_{\delta}(S)$. 

For a given radius ${\delta}>0$, a set of
closed balls ${\mathcal C}=\{ B_{{\delta}}(x_i): i\in I\}$ with
centers $x_i\in X$ is called a {\it covering} of a set $S$ if $S\subseteq
\bigcup_{i\in I} B_{\delta}(x_i)$. Analogously, a set of open balls
${\mathcal C}^{\circ}=\{ B^{\circ}_{{\delta}}(x_i): i\in I\}$ is
called an {\it open covering} of $S$ if $S\subseteq \bigcup_{i\in I}
B^{\circ}_{\delta}(x_i)$. Denote by $\rho_{\delta}(S)$ (respectively,
by $\rho^{\circ}_{\delta}(S)$) the minimum number of balls of radius
${\delta}$ in a covering (respectively, in a open covering) of $S$,
and call $\rho_{\delta}(S)$ and $\rho^{\circ}_{\delta}(S)$ the {\it
covering} and the {\it open covering numbers} of $S$. Obviously,
$\rho_{\delta}(S)\le \rho^{\circ}_{\delta}(S)$. If $S$ is compact,
then $\rho^{\circ}_{\delta}(S)$ is finite, and therefore
$\rho_{\delta}(S)$ is finite as well.

A set of closed balls ${\mathcal P}=\{ B_{{\delta}}(x_i): i\in I\}$
with centers $x_i\in S$ is called a {\it packing} of $S\subseteq X$ if
the balls of ${\mathcal P}$ are pairwise disjoint. Analogously, a set
of open balls ${\mathcal P}^{\circ}=\{ B_{{\delta}}^{\circ}(x_i): i\in
I\}$ with centers $x_i\in S$ is called an {\it open packing} of $S$ if
the balls of ${\mathcal P}^{\circ}$ are pairwise disjoint. Denote by
$\nu_{\delta}(S)$ the maximum number of closed balls in a packing of
$S$, i.e., the size of a largest subset $P$ of $S$ such that
$d(x_i,x_j)>2{\delta}$ for any two distinct points $x_i,x_j$ of $P$,
and call $\nu_{\delta}(S)$ the {\it packing number} of
$S$. Analogously, the {\it open packing number}
$\nu^{\circ}_{\delta}(S)$ is the size of a largest subset $P$ of $S$
such that $d(x_i,x_j)\ge 2{\delta}$ for any two distinct points
$x_i,x_j$ of $P$. Clearly, for any $S\subseteq X$, the following
inequalities hold: $\nu_{\delta}(S)\le \nu^{\circ}_{\delta}(S)$,
$\nu_{\delta}(S)\le \rho_{\delta}(S)$, and $\nu^{\circ}_{\delta}(S)\le
\rho^{\circ}_{\delta}(S)$.  Therefore, if $S$ is compact, then
$\nu(S)$ and $\nu^{\circ}_{\delta}(S)$ are finite as well.
Finally, a {\it
${\delta}$-simplex covering} of $S$ is a collection $\mathcal R=\{
Y_i: i\in I\}$ of ${\delta}$-simplices such that $Y_i\subseteq S$ and
$S=\bigcup_{i\in I} Y_i$. The {\it ${\delta}$-simplex covering number}
$\theta_{\delta}(S)$ of $S$ is the minimum number of
${\delta}$-simplices in a covering of $S$. Notice that
$\theta_{\delta}(S)=1$ (i.e., $S$ is an ${\delta}$-simplex) if and
only if $\nu_{\delta}(S)=1$.

We will say that a class $\mathcal M$ of metric spaces has the {\it
bounded covering-packing property} if there exists a universal
constant $c$ such that for any metric space $(X,d)$ from ${\mathcal
M}$, any ${\delta}>0$, and any compact subset $S$ of $X$, the
inequality $\rho_{\delta}(S)\le c\nu_{\delta}(S)$ holds. We will also
say that $\mathcal M$ has the {\it bounded simplex-ball covering
property}, if there exists a universal constant $c$ such that for any
$(X,d)\in {\mathcal M}$ and any ${\delta}>0$, any ${\delta}$-simplex
$S$ of $X$ can be covered by at most $c$ balls of radius
${\delta}$. Recall also that a class $\mathcal G$ of graphs is {\it
linearly $\chi$-bounded} if there exists a constant $c$ such that
$\chi(G)\le c\omega(G)$ for any graph $G\in {\mathcal G}$.

\begin{lemma} \label{bounded-cover}
Let $\mathcal M$ be a class of metric spaces having the bounded
simplex-ball covering property. If the class of graphs ${\mathcal
G}=\{ \overline{G}_{2{\delta}}(S): {\delta}>0 \mbox{ and } S \mbox{ is
a compact subset of } X\}$ is linearly $\chi$-bounded, then $\mathcal
M$ satisfies the bounded covering-packing property.
\end{lemma}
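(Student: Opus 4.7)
The plan is to identify the packing number $\nu_\delta(S)$ with the clique number of $\overline{G}_{2\delta}(S)$ and the minimum number of $\delta$-simplices covering $S$ with its chromatic number, then glue the two hypotheses together. First I would observe that a set $P\subseteq S$ is a packing of $S$ by closed $\delta$-balls precisely when $d(x,y)>2\delta$ for any two distinct points $x,y\in P$, i.e., precisely when $P$ induces a clique in $\overline{G}_{2\delta}(S)$. Hence $\nu_\delta(S)=\omega(\overline{G}_{2\delta}(S))$, and in particular $\omega(\overline{G}_{2\delta}(S))$ is finite by compactness of $S$.

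Next I would invoke the linear $\chi$-boundedness of the class $\mathcal G$: there is a universal constant $c_2$ with
\[
\chi(\overline{G}_{2\delta}(S))\le c_2\,\omega(\overline{G}_{2\delta}(S))=c_2\,\nu_\delta(S).
\]
Fix a proper coloring of $\overline{G}_{2\delta}(S)$ with $k:=\chi(\overline{G}_{2\delta}(S))$ colors. Each color class is an independent set of $\overline{G}_{2\delta}(S)$, hence a clique in $G_{2\delta}(S)$, i.e., a subset of $S$ of diameter at most $2\delta$; this is by definition a $\delta$-simplex. Thus $S$ is partitioned into at most $c_2\,\nu_\delta(S)$ many $\delta$-simplices, giving $\theta_\delta(S)\le c_2\,\nu_\delta(S)$.

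Finally I would apply the bounded simplex-ball covering property: there is a universal constant $c_1$ such that every $\delta$-simplex is covered by at most $c_1$ closed balls of radius $\delta$. Covering each of the $k$ $\delta$-simplices of the partition by at most $c_1$ balls and taking the union yields a covering of $S$ by at most $c_1 k\le c_1 c_2\,\nu_\delta(S)$ balls, whence $\rho_\delta(S)\le c_1 c_2\,\nu_\delta(S)$. This proves the bounded covering-packing property with constant $c:=c_1 c_2$.

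The only conceptual obstacle is that for a compact but non-finite subset $S$ the graph $\overline{G}_{2\delta}(S)$ has infinitely many vertices, so one must make sense of the $\chi$-bounded inequality in that setting; but since $\omega(\overline{G}_{2\delta}(S))=\nu_\delta(S)$ is finite by compactness, the hypothesis (applied to the graph as an element of $\mathcal G$) directly forces $\chi$ to be finite, and the coloring/partition argument above runs through unchanged. No other delicate step is needed.
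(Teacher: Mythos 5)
Your proposal is correct and follows essentially the same route as the paper: identify $\nu_\delta(S)$ with $\omega(\overline{G}_{2\delta}(S))$, use the linear $\chi$-bound to turn a proper coloring of $\overline{G}_{2\delta}(S)$ into a clique cover of $G_{2\delta}(S)$, i.e., a $\delta$-simplex covering of size at most $c_2\nu_\delta(S)$, and then cover each simplex by at most $c_1$ balls. Your closing remark on why finiteness of $\omega$ (via compactness) makes the $\chi$-bound meaningful for the infinite graph $\overline{G}_{2\delta}(S)$ is a small but welcome clarification that the paper's two-line proof leaves implicit.
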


\begin{proof}
  Since any coloring of $\overline{G}_{2{\delta}}(S)$ is a clique
  covering of ${G}_{2{\delta}}(S)$ and each clique of
  ${G}_{2{\delta}}(S)$ is a $\delta$-simplex of $S$, the set $S$
  admits a $\delta$-simplex covering with at most
  $c\omega(\overline{G}_{2{\delta}}(S))$ simplices.  If $(X,d)$ has
  the bounded covering-packing property with constant $c'$, we
  conclude that $S$ can be covered with at most $c'c
  \omega(\overline{G}_{2{\delta}}(S))=c'c\nu_{\delta}(S)$ balls of
  radius $\delta$.
\end{proof}

An important class of metric spaces satisfying the bounded
covering-packing property (and extending the Euclidean spaces) is
constituted by metric spaces with {\it bounded doubling dimension},
i.e., metric spaces $(X,d)$ in which for any ${\delta}>0$ any ball of
radius $2{\delta}$ of $X$ can be covered with a constant number of
balls of radius ${\delta}$~\cite{Cl}. We will relax this doubling
property in the following way. We will say that a metric space $(X,d)$
satisfies the {\it weak doubling property} if there exists a constant
$c$ such that for any ${\delta}>0$ and any compact set $S\subseteq X$,
there exists a point $v\in S$ such that $B_{2{\delta}}(v)\cap S$ can
be covered with at most $c$ balls of radius ${\delta}$ of $X$. The
proof of the following result will be given in the next section:

\begin{proposition} \label{weak-doubling}
If a complete metric space $(X,d)$ satisfies the weak doubling
property with constant $c$, then for any compact set $S\subseteq X$
and any ${\delta}>0$, $\rho_{\delta}(S)\le c\nu_{\delta}(S)$.
\end{proposition}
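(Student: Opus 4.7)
The plan is to argue by induction on $k := \nu_\delta(S)$. The base case $k = 0$ forces $S = \emptyset$, so the bound $\rho_\delta(S) \le 0$ holds trivially.

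For the inductive step, assume the statement for all compact subsets with packing number strictly less than $k$, and let $S$ be compact with $\nu_\delta(S) = k \ge 1$. I would first apply the weak doubling property directly to $S$, producing a point $v \in S$ and $c$ closed balls of radius $\delta$ whose union contains $B_{2\delta}(v) \cap S$. Next I would set $T := \overline{S \setminus B_{2\delta}(v)}$; being closed in the compact set $S$, it is itself compact. Since every point of $S$ lies either in $B_{2\delta}(v) \cap S$ or in $T$, combining the $c$ balls with any cover of $T$ yields a cover of $S$. Hence it suffices to prove $\nu_\delta(T) \le k - 1$: the inductive hypothesis would then give $\rho_\delta(T) \le c(k-1)$ and therefore $\rho_\delta(S) \le c + c(k-1) = ck$, as required.

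The heart of the argument, and the only delicate point, is the bound $\nu_\delta(T) \le k - 1$. Given a packing $\{w_1, \ldots, w_m\} \subseteq T$, the direct attempt to append $v$ and obtain a packing of $S$ of size $m+1$ can fail, since $T$ may contain limit points of $S \setminus B_{2\delta}(v)$ lying on the sphere $\{x : d(v,x) = 2\delta\}$, whereas a packing requires strict $(>2\delta)$-separation. I plan to handle this by a perturbation argument: set $\eta := \tfrac{1}{3}\min_{i\ne j}(d(w_i,w_j) - 2\delta) > 0$, and for each $w_i$ with $d(v,w_i) = 2\delta$ choose a point $w_i' \in S \setminus B_{2\delta}(v)$ within distance $\eta$ of $w_i$ (such a point exists precisely because $w_i$ belongs to the closure); the remaining $w_i$ are already in $S \setminus B_{2\delta}(v)$ and are kept unchanged. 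The triangle inequality then yields $d(w_i', w_j') > 2\delta$ for all $i \ne j$ and $d(v, w_i') > 2\delta$ for every $i$, so $\{v, w_1', \ldots, w_m'\}$ is a genuine packing of $S$ of size $m+1$, forcing $m \le k-1$ and closing the induction.

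The main obstacle is thus this last step: converting a packing of the closure $T$ into a bona fide strict packing of $S$ that is compatible with the extra point $v$. The natural alternative of working with $S \setminus B_{2\delta}(v)$ directly would avoid the sphere-point issue but is not compact, so one cannot invoke the weak doubling property on it at later stages of the induction; taking the closure restores compactness at the cost of introducing the boundary points, which the perturbation is designed to absorb.
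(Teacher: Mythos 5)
Your argument is correct and is essentially the paper's proof in inductive clothing: the same greedy peeling of $B_{2\delta}(v)\cap S$ via the weak doubling property, the same use of closures to preserve compactness, and the same perturbation of the sphere points of $\overline{S\setminus B_{2\delta}(v)}$ back into $S\setminus B_{2\delta}(v)$ to restore strict $>2\delta$ separation before appending $v$. The only organizational difference is that the paper runs the peeling as an explicit iterative algorithm that builds an open packing and then repairs it by one global reverse-order perturbation at the end, whereas you repair locally at each inductive step --- slightly cleaner bookkeeping of the same idea.
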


\subsection{Related work}

Kolmogorov and Tikhomirov~\cite{KoTi} introduced the three covering
and packing numbers (under different notations and names) and noticed
the following simple but fundamental relationship between them: for
any completely bounded (in particular, compact) subset $S$ of an
arbitrary metric space $(X,d)$,
$$\nu_{{\delta}}(S)\le \theta_{\delta}(S)\le \rho_{\delta}(S)\le
\nu_{\frac{{\delta}}{2}}(S).$$ Furthermore, they called the binary
logarithms of the quantities $\theta_{\delta}(S), \rho_{\delta}(S),$
and $\nu_{\delta}(S)$ the {\it ${\delta}$-entropy} of $S$, the {\it
${\delta}$-entropy} of $S$ with respect to $X$, and the {\it
${\delta}$-capacity} of $S$, respectively (also called {\it metric
entropy} and {\it metric capacity} of $S$). These quantities found
numerous applications in pure and applied mathematics~\cite{Lo},
probability theory and statistics~\cite{Du}, learning theory
~\cite{KuMi}, and computational geometry~\cite{Cl}, just to name some.

Notice also the following graph-theoretical interpretation of covering
and packing numbers $\theta_{\delta}(S),\nu_{\delta}(S),$ and
$\rho_{\delta}(S)$. A ${\delta}$-simplex covering of $S$ in the sense of Kolmogorov
and Tikhomirov corresponds to a covering of $S$ by simplices of the 
Rips complex $P_{2{\delta}}(S)$ and to a clique cover of $G_{2{\delta}}(S)$;
therefore $\theta_{\delta}(S)$ corresponds to the size of a minimum
clique covering of $G_{2{\delta}}(S)$, i.e., to the chromatic number
$\chi(\overline{G}_{2{\delta}}(S))$ of the complement
$\overline{G}_{2{\delta}}(S)$ of the graph
$G_{2{\delta}}(S)$. Analogously, a packing of $S$ corresponds to a
stable set of $G_{2{\delta}}(S)$, i.e., to a clique of
$\overline{G}_{2{\delta}}(S)$; consequently, $\nu_{\delta}(S)$ equals
the clique number $\omega(\overline{G}_{2{\delta}}(S))$ of the
complement of $G_{2{\delta}}(S)$. Finally, $\rho_{\delta}(S)$
corresponds to the domination number of $G_{\delta}(S)$, i.e., to the
minimum covering of $S$ by stars of $G_{\delta}(S)$.

It was shown in~\cite{ChEsVa} that the class ${\mathcal M}_{\sf
planar}$ of all metric spaces obtained as standard graph-metrics of
planar graphs has the bounded simplex-ball covering property. In
~\cite{BoCh}, this result was generalized to all graphs on surfaces of
a given genus; see also~\cite{Bou,BouTho} for other generalizations of
the result of~\cite{ChEsVa}. It was conjectured in \cite[Problem
5]{Ca} that the class ${\mathcal M}_{\sf planar}$ has the bounded
covering-packing property, namely, that it satisfies the weak doubling
property. Notice also, that it was shown in~\cite{ChEs} that if $S$ is
a compact subset of a geodesic $\varepsilon$-hyperbolic space (in the
sense of Gromov) or of an $\varepsilon$-hyperbolic graph, then
$\rho_{{\delta}+2\varepsilon}(S)\le \nu_{\delta}(S)$ (compare it with the
general inequality $\nu_{{\delta}}(S)\le \rho_{\delta}(S)\le
\nu_{\frac{{\delta}}{2}}(S)$). This result can be interesting if the
hyperbolicity $\varepsilon$ constant is much smaller than the radius
$\delta$ of balls used in the covering.

There exists a strong analogy between the properties of graphs and
geodesic metric spaces, due to their uniform local structure. Any
graph $G=(V,E)$ gives rise to a network-like geodesic space (into
which $G$ isometrically embeds) obtained by replacing each edge $xy$
of $G$ by a segment isometric to $[0,1]$ with ends at $x$ and
$y$. Conversely, by \cite[Proposition 8.45]{BrHa}, any geodesic metric
space $(X,d)$ is (3,1)-quasi-isometric to a graph $G=(V,E)$. (Let $(X_1,d_1)$
and $(X_2,d_2)$ be metric spaces. A map $f:X_1\rightarrow X_2$ is called a
$(\lambda,\epsilon)$-{\it quasi-isometric embedding} if there exists constants
$\lambda\ge 1$ and $\epsilon\ge 0$ such that for all $x,y\in X_1$,
$\frac{1}{\lambda}d_1(x,y)-\epsilon\le d_2(f(x),f(y))\le \lambda d_1(x,y)+\epsilon$.) This
graph $G$ is constructed in the following way: let $V$ be an open
$\frac{1}{3}$-packing of $X$ (it exists by Zorn's lemma but can be
infinite). Then two points $x,y\in V$ are adjacent in $G$ if and only
if $d(x,y)\le 1$.

Due to this analogy, one can formulate the previous question about
${\mathcal M}_{\sf planar}$ for their continuous counterparts
${\mathcal M}_{\sf polygon}$--- polygons in ${\mathbb R}^2$ endowed
with the (intrinsic) geodesic metric.  It turns out that this question
was not yet considered even for simple polygons (in this case, only a
factor 2 approximation algorithm for packing number was recently given
in~\cite{Vi}). The geodesic metric on simple polygons was studied in
several papers in connection with algorithmic problems. In particular,
in was shown in~\cite{PoShRo}, that balls are convex, implying that
simple polygons are Busemann spaces.  In this paper, we consider the
relationship between the packing and covering numbers not only for
simple polygons in the Euclidean or hyperbolic planes but also for
(compact subsets of) general Busemann surfaces.

\subsection{The main results}

We continue with statements of the main results of this
note. Starting from now, we will denote $\rho_{\delta}(S)$ and
$\nu_{\delta}(S)$ by $\rho(S)$ and $\nu(S)$, respectively.

\begin{theorem} \label{packing_covering}
 Let $S$ be a compact subset of a Busemann surface $({\mathcal S},d)$
and ${\delta}$ an arbitrary positive number. Then $\rho(S)\le
\constant{}\nu(S)$.
\end{theorem}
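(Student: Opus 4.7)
The plan is to establish the weak doubling property of Proposition~\ref{weak-doubling} for Busemann surfaces with constant~$\constant{}$, and then read off $\rho(S)\le\constant{}\,\nu(S)$ directly from that proposition. This reduces Theorem~\ref{packing_covering} to the following single geometric statement: for every compact $S\subseteq\mathcal{S}$ there exists a point $v\in S$ such that $B_{2\delta}(v)\cap S$ admits a covering by $\constant{}$ closed balls of radius $\delta$.

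To find such a $v$, I would choose it as an extreme point of $\conv(S)$ lying in $S$. Concretely, fixing any $p\in\mathcal{S}\setminus\conv(S)$ and letting $v\in S$ maximize $d(p,\cdot)$ puts $v$ on the boundary of the convex ball $B_{d(p,v)}(p)$, which contains $S$. Using the planar-convexity machinery promised in Subsection~\ref{subsection:auxiliary}---Helly's theorem, convexity of shades of geodesic segments and of triangles, line-separation of a triangle from an external point, and the geodesic extension property---I would then produce a geodesic line $L$ through $v$ such that $S$ lies in one of the two closed half-planes $H^{+}$ bounded by $L$ (a supporting geodesic at $v$). Consequently $B_{2\delta}(v)\cap S\subseteq H:=B_{2\delta}(v)\cap H^{+}$, a ``half-disk'' of radius $2\delta$ at $v$.

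The covering of $H$ by $\constant{}$ balls of radius $\delta$ would be built in two stages. First, the single ball $B_\delta(v)$ covers $B_\delta(v)\cap H$. For the remaining half-annulus $A:=\{x\in H:\delta\le d(v,x)\le 2\delta\}$, I would choose finitely many geodesic segments of length $2\delta$ emanating from $v$, with their far endpoints evenly spaced along the arc $\{x:d(v,x)=2\delta\}\cap H^{+}$ (the two extremal geodesics running along $L$). The Busemann inequality of Proposition~A(iv), $d(\gamma(t\ell),\gamma'(t\ell'))\le t\,d(\gamma(\ell),\gamma'(\ell'))$, controls the diameter of each resulting angular-annular piece as a function of the endpoint spacing. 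Choosing the spacing so that consecutive endpoints are within distance a fixed multiple of $\delta$ forces each piece to have bounded diameter, and a small constant number of balls of radius $\delta$, placed at well-chosen centers inside each piece, covers it; the arithmetic of the spacing and of the interior balls is tuned to yield exactly $\constant{}-1=18$ balls for $A$ and so $\constant{}$ in total.

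The main obstacle will be the precise covering count. In the Euclidean half-plane, a half-disk of radius $2\delta$ needs only about $5$ balls of radius $\delta$; the inflation to $\constant{}$ comes from the one-sided nature of Proposition~A(iv) (no matching lower bound and no Euclidean law of cosines), which forces a more conservative angular partition of $A$ and some overlap near $L$, and these slacknesses accumulate. A secondary obstacle, logically prior, is actually producing the supporting geodesic $L$: although intuitive from the $\mathbb{R}^{2}$ picture, this separation statement depends essentially on the nontrivial planar-convexity lemmas (line-separation of a triangle and an external point, convex shades, Helly) that the authors develop in Subsection~\ref{subsection:auxiliary}, and it is there that the 2-dimensionality of $\mathcal{S}$ (as opposed to a general Busemann space) is used.
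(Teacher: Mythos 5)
Your reduction to the weak doubling property via Proposition~\ref{weak-doubling} is exactly the paper's route, but the geometric core --- producing a point $v\in S$ for which $B_{2\delta}(v)\cap S$ is coverable by \constant{} balls --- has a genuine gap. You propose to cover the \emph{entire} half-disk $H=B_{2\delta}(v)\cap H^{+}$ by slicing it into angular sectors whose far endpoints are ``evenly spaced along the arc'' $\partial B_{2\delta}(v)\cap H^{+}$. This presupposes that the arc can be subdivided into a \emph{bounded} number of pieces with endpoints at mutual distance $O(\delta)$, which is false: Busemann surfaces are not doubling, and the paper's own Remark after Proposition~\ref{prop:twenty-three-balls} constructs a surface in which $\partial B_{2\delta}(v)$ contains $n$ points that are pairwise at distance $4\delta$ (a branch point at distance $<2\delta$ from $v$ on the $S$-side of any supporting line reproduces this inside a half-plane). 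Proposition~A(iv) only transfers distance bounds from the outer endpoints inward along the cone; it cannot bound the number of spacing points you need on the arc in the first place, so the ``tuning of the arithmetic'' to get $18$ balls is circular. A secondary, logically prior gap is the existence of a supporting geodesic line at your extreme point of $\conv(S)$: this is asserted from the $\mathbb{R}^{2}$ picture but is not among the separation lemmas the paper proves (Lemma~\ref{separa} separates a point from a \emph{triangle} by extending one of its sides, which is a different statement), and the paper never needs it.

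The paper's actual proof of Proposition~\ref{prop:twenty-three-balls} avoids both problems by (i) covering only $B_{2\delta}(v)\cap S$ rather than a full half-disk, and (ii) choosing $v$ as an endpoint of a \emph{diametral pair} $u,v$ of $S$, not merely an extreme point of $\conv(S)$. Diametrality gives the inequality $d(u,z)\le d(u,v)$ for every $z\in S$, and this --- fed into the quadrangle inequality (Lemma~\ref{quadrangle}) and the perimeter monotonicity (Lemma~\ref{perimeter}) --- is what forces the two ``lateral'' regions $B$ and $C$ of $S'_0$ to have diameter at most $2\delta$ (hence $3$ balls each by Proposition~\ref{prop:three-balls}), while the remaining region $A$ is a geodesic quadrilateral with all sides at most $2\delta$, covered by $4$ balls via Proposition~A(iii). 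Splitting along the line $(u,v)$ and sharing one ball gives $2(3+3+4)-1=\constant{}$. Your choice of $v$ discards exactly the leverage ($d(u,z)\le d(u,v)$ for all $z\in S$) that makes the bounded covering possible, so the argument cannot be repaired by tuning constants; it needs the diametral pair.
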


\begin{corollary} \label{cor:polygon}
Let $\mathcal P$ be a simple polygon in ${\mathbb R}^2$.  Then
$\nu({\mathcal P})\le \rho({\mathcal P})\le \constant{}\nu({\mathcal P})$ for
any $\delta>0$.
\end{corollary}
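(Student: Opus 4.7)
The plan is to reduce Corollary~\ref{cor:polygon} to Theorem~\ref{packing_covering} by exhibiting $\mathcal{P}$ (with its geodesic metric) as a compact subset of a suitable Busemann surface, and then invoking the general inequality $\nu \le \rho$.

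\medskip
\noindent\textbf{Step 1 (the left inequality).} The inequality $\nu(\mathcal{P}) \le \rho(\mathcal{P})$ is a general fact valid in any metric space: if $\{B_{\delta}(x_i) : i \in I\}$ is a packing with centers $x_i \in \mathcal{P}$ and $\{B_{\delta}(y_j) : j \in J\}$ is a covering of $\mathcal{P}$, then each $x_i$ lies in some $B_{\delta}(y_{j(i)})$; two distinct centers $x_i, x_{i'}$ cannot share the same $y_{j(i)}$ since otherwise $d(x_i, x_{i'}) \le 2\delta$, contradicting the packing condition. Hence the assignment $i \mapsto j(i)$ is injective and $|I| \le |J|$.

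\medskip
\noindent\textbf{Step 2 (embedding into a Busemann surface).} Following the discussion preceding the main results, I would first triangulate $\mathcal{P}$ to turn it into a finite piecewise-Euclidean planar complex; since $\mathcal{P}$ has no interior vertices whose link forces a cone point (any interior triangulation vertex has angle-sum exactly $2\pi$, and boundary vertices are on $\partial \mathcal{P}$), the resulting complex is non-positively curved. Next I would apply the construction recalled from \cite{ChChNa}: glue a closed Euclidean halfplane $H_e$ along each boundary edge $e$ of $\mathcal{P}$, joining adjacent halfplanes along the two outward rays at each boundary vertex. This produces a planar surface $\mathcal{S}$, endowed with its intrinsic $\ell_2$-metric $d$, which is CAT(0) and hence a Busemann surface; $\mathcal{P}$ isometrically embeds into $(\mathcal{S},d)$ as a convex subset (the geodesic metric on $\mathcal{P}$ agrees with the restriction of $d$, precisely because $\mathcal{P}$ is convex in $\mathcal{S}$, so no geodesic in $\mathcal{S}$ between two points of $\mathcal{P}$ can leave $\mathcal{P}$).

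\medskip
\noindent\textbf{Step 3 (compactness and application of Theorem~\ref{packing_covering}).} As a closed and bounded subset of $\mathbb{R}^2$, $\mathcal{P}$ is compact in the Euclidean metric, and since the geodesic metric on a simple polygon induces the same topology as the Euclidean one, $\mathcal{P}$ is also compact as a subset of $(\mathcal{S},d)$. Theorem~\ref{packing_covering} then applies directly and yields $\rho(\mathcal{P}) \le \constant{}\,\nu(\mathcal{P})$.

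\medskip
\noindent\textbf{Main obstacle.} There is no real obstacle; the only non-cosmetic point is Step~2, namely making sure the $\delta$-balls and the notions $\rho, \nu$ computed inside $\mathcal{P}$ (with its native geodesic metric) coincide with the ones computed inside the ambient Busemann surface $\mathcal{S}$. Both follow from convexity of $\mathcal{P}$ in $\mathcal{S}$: geodesics between points of $\mathcal{P}$ stay in $\mathcal{P}$, and a packing/covering by centers in $\mathcal{P}$ for the geodesic metric is the same object as a packing/covering by centers in $\mathcal{P}$ viewed in $\mathcal{S}$.
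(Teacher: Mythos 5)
Your overall route is the same as the paper's: extend $\mathcal P$ to a Busemann surface $\mathcal S$ in which $\mathcal P$ sits as a compact convex subset, and apply Theorem~\ref{packing_covering}; Steps 1 and 2 are fine. There is, however, one genuine gap, and it is exactly the point your ``Main obstacle'' paragraph tries to wave away. In the paper's definitions a covering of a set $S$ may use balls centered at \emph{arbitrary} points of the ambient space, while a packing must use centers in $S$. The corollary concerns $\mathcal P$ as a metric space in its own right, so $\rho(\mathcal P)$ there means coverings with centers in $\mathcal P$. Write $\rho_{\mathcal P}(\mathcal P)$ for that quantity and $\rho_{\mathcal S}(\mathcal P)$ for the covering number of $\mathcal P$ as a subset of $\mathcal S$. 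Your remark that ``a covering by centers in $\mathcal P$ is the same object in both metrics'' only yields $\rho_{\mathcal S}(\mathcal P)\le \rho_{\mathcal P}(\mathcal P)$, which is the wrong direction: Theorem~\ref{packing_covering} gives $\rho_{\mathcal S}(\mathcal P)\le \constant{}\,\nu(\mathcal P)$, and a priori the covering it produces could use balls centered outside $\mathcal P$, so nothing in your argument bounds $\rho_{\mathcal P}(\mathcal P)$. (For $\nu$ there is no issue, since packing centers are required to lie in $\mathcal P$ and the two metrics agree on $\mathcal P$ by convexity.)

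The paper closes this gap by inspecting the construction: every ball center produced in the proofs of Propositions~\ref{prop:three-balls} and \ref{prop:twenty-three-balls} lies in $\conv(S)$ --- the vertices $x',y',z'$ of a critical triangle lie in $\Delta(x,y,z)$ with $x,y,z\in S$, the centers $p,q,r,s,m$ in Claim~6 of Proposition~\ref{prop:twenty-three-balls} are midpoints of geodesics between points of $\Delta(u,v,w)$ with $u,v\in S$ and $w\in\conv(S')$, and so on. Since $\mathcal P$ is convex in $\mathcal S$, $\conv(\mathcal P)=\mathcal P$ and all centers land in $\mathcal P$, which is what the corollary needs. An alternative fix would be to replace each center by its nearest-point projection onto $\mathcal P$ and argue that the projected ball still covers $B_{\delta}(x_i)\cap\mathcal P$, but that requires a nonexpansiveness property of projections onto convex sets that you would have to justify separately; the paper's route of tracking the constructed centers is the cleaner repair, and your write-up should include it.
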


\begin{proof}
Let $\mathcal P$ be a simple polygon endowed with the geodesic
metric. In~\cite{ChChNa} it was shown how to extend $\mathcal
P$ to a Busemann surface $({\mathcal S},d)$. Notice that by
this construction, $\mathcal P$ is embedded as a convex subset of
${\mathcal S}$. Since $\mathcal P$ is a compact subset of $\mathcal S$,
$\rho({\mathcal P})\le \constant{}\nu({\mathcal P})$ by Theorem~\ref{packing_covering}.
Let ${\mathcal C}=\{ B_{\delta}(x_1),\ldots, B_{\delta}(x_k)\}$
be a covering of $\mathcal P$ with closed ${\delta}$-balls of
$({\mathcal S},d)$ constructed as in the proof of Propositions
\ref{prop:three-balls} and \ref{prop:twenty-three-balls}. Since
$\mathcal P$ is a compact convex subset of $\mathcal S$,
the centers of the balls of $\mathcal C$ will belong to $\mathcal P$,
concluding the proof of Corollary ~\ref{cor:polygon}.
\end{proof}

The proof of Theorem~\ref{packing_covering} immediately follows from
Proposition~\ref{weak-doubling} and Proposition
\ref{prop:twenty-three-balls} formulated below and which establishes
that Busemann surfaces satisfy the weak doubling property.  One
essential ingredient in the proof of Proposition
\ref{prop:twenty-three-balls} is the bounded simplex-ball covering
property established in Proposition \ref{prop:three-balls}.  We
continue with the precise formulation of these two results.

Proposition \ref{prop:three-balls} extends the well-known folkloric
result by Hadwiger and Debrunner~\cite{HaDe} that any set of pairwise
intersecting unit balls in the plane can be pierced by three needles
(answering a question by Gr\"unbaum, this result was extended in
\cite{Ka} to translates of any convex compact set of
${\mathbb R}^2$). Namely, we show that Busemann surfaces satisfy the
bounded simplex-ball covering property with constant 3:

\begin{proposition} \label{prop:three-balls}
Let $S$ be a compact subset of a Busemann surface $({\mathcal S},d)$
and suppose that the diameter of $S$ is at most $2{\delta}$. Then $S$
can be covered with 3 balls of radius ${\delta}$, i.e., $\rho(S)\le
3$.
\end{proposition}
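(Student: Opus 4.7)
The plan is to construct three centers in $\mathcal{S}$ whose closed $\delta$-balls cover $S$. First, if $\operatorname{diam}(S) \le \delta$, any point $x_0 \in S$ yields $S \subseteq B_\delta(x_0)$, so $\rho(S) \le 1$. Hence assume $\delta < D := \operatorname{diam}(S) \le 2\delta$, and fix, by compactness of $S$, two points $a, b \in S$ with $d(a,b) = D$. Take the first two centers to be $a$ and $b$; the residual set
\[
S' \;=\; \{\, x \in S : d(a,x) > \delta \text{ and } d(b,x) > \delta \,\}
\]
must then be covered by a single $\delta$-ball. If $S' = \emptyset$, the two balls $B_\delta(a), B_\delta(b)$ already cover $S$; otherwise, it remains to produce $c^* \in \mathcal{S}$ with $S' \subseteq B_\delta(c^*)$.

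To produce $c^*$, I would consider the family $\mathcal{F} = \{\, B_\delta(x) : x \in S' \,\}$ of closed, convex $\delta$-balls. This family is pairwise intersecting: for any $x, y \in S'$, $d(x,y) \le D \le 2\delta$, so the midpoint of the geodesic $[x,y]$ lies in $B_\delta(x) \cap B_\delta(y)$. I would then invoke the Helly theorem for convex sets in Busemann surfaces (established in Subsection~\ref{subsection:auxiliary}): since $\mathcal{S}$ is two-dimensional, a common point of $\mathcal{F}$ exists once every three members of $\mathcal{F}$ share a common point. Combined with compactness of $S$, such a common point would serve as $c^*$.

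The main obstacle is establishing the triple-wise intersection of $\mathcal{F}$. Pairwise intersection does not imply triple-wise intersection in dimension two (three unit disks at the vertices of an equilateral triangle of side $2$ in $\mathbb{R}^2$ are pairwise but not triple-wise intersecting). What makes the triple-wise property hold here is the restriction $x, y, z \in S'$: each such point lies outside $B_\delta^{\circ}(a) \cup B_\delta^{\circ}(b)$ while remaining inside $B_D(a) \cap B_D(b)$. The line-separation property of the geodesic $[a,b]$ (Subsection~\ref{subsection:auxiliary}) partitions $\mathcal{S}$ into two components and confines $S'$ to the union of its traces on these two sides, and the constraints $d(a,\cdot), d(b,\cdot) > \delta$ force points on the same side to be sufficiently close for their $\delta$-balls to meet three at a time.

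The verification of this triple intersection is the technical heart of the proof. Unlike in the Euclidean plane, where sharp inequalities (Pythagoras or Jung's theorem) would make the bound routine, in a general Busemann surface only the weaker Busemann convexity inequalities of Proposition~A are available. The proof therefore combines several two-dimensional tools from Subsection~\ref{subsection:auxiliary}---convexity of triangles and of their shades, monotonicity of triangle perimeters, the Busemann midpoint inequality, and line-separation---to pin down the geometry of $S'$ and extract a common piercing point. If points of $S'$ actually lie on both sides of $[a,b]$, one may need to treat each side separately or slightly re-choose the three centers (for example, placing them at the midpoints of the sides of a geodesic triangle spanned by $a$, $b$, and a farthest point of $S$), but the total of three centers remains.
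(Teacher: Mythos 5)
Your reduction to covering the residual set $S'=\{x\in S: d(a,x)>\delta,\ d(b,x)>\delta\}$ by a \emph{single} $\delta$-ball is the step that fails, and it fails already in the Euclidean plane, so no amount of Busemann machinery will rescue it. You correctly identify that everything hinges on the triple-wise intersection of $\{B_\delta(x):x\in S'\}$, but you do not prove it, and it is false. Take $\delta=1$ and let $S$ be the Reuleaux triangle of width $2$ built on the equilateral triangle with vertices $a=(0,\sqrt3)$, $v_1=(-1,0)$, $v_2=(1,0)$; its diameter is $2=2\delta$. Choose the diametral pair $a$, $b$ with $b=(0,\sqrt3-2)$ the midpoint of the boundary arc centred at $a$ (every diametral pair of $S$ is of the form vertex/opposite-arc point, so your construction may well pick this one). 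Then $v_1,v_2\in S'$, and the point $w=(\sqrt3-1,\,1)$ on the boundary arc centred at $v_1$ also lies in $S'$, since $d(w,a)=\sqrt{8-4\sqrt3}\approx 1.035>1$ and $d(w,b)=\sqrt{16-8\sqrt3}\approx 1.46>1$. The triangle $v_1v_2w$ has sides $2$, $2$, $\sqrt{8-4\sqrt3}$, is acute, has area $1$, hence circumradius $\sqrt{8-4\sqrt3}\approx 1.035>1$; therefore $B_1(v_1)\cap B_1(v_2)\cap B_1(w)=\varnothing$ and no single unit ball contains these three points of $S'$. So with two centres anchored at a diametral pair, the remainder genuinely needs more than one ball, and your fallback (``treat each side of $[a,b]$ separately'') would push the count past three.

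The paper's proof takes a different route precisely to avoid this: it never places centres at points of $S$ or at a diametral pair. Instead, for each ``critical'' triplet $x,y,z\in S$ (one whose three $\delta$-balls have empty common intersection) it builds a minimal-perimeter \emph{critical triangle} $\Delta(x',y',z')$ with $x'\in B_\delta(y)\cap B_\delta(z)$, etc., then maximizes the perimeter of the critical triangle over all triplets, and shows by a four-case analysis (using shades, Pasch, and perimeter monotonicity) that the balls centred at $x'$, $y'$, $z'$ cover all of $S$; the Helly lemma is used only to dispose of the case where no critical triplet exists. Your instinct at the very end --- re-choosing the three centres relative to a triangle spanned by far-apart points of $S$ --- points in the right direction, but the min--max perimeter selection and the covering argument are the actual content of the proof, and they are absent from your proposal.
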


The second result shows that Busemann surfaces satisfy the weak
doubling property:

\begin{proposition} \label{prop:twenty-three-balls}
 Let $S$ be a compact subset of a Busemann surface $({\mathcal S},d)$
and let $u,v\in S$ be a diametral pair of $S$. Then $B_{2{\delta}}(v)\cap S$
can be covered by \constant{} balls
of radius ${\delta}$.
\end{proposition}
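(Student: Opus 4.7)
The plan is to apply Proposition~\ref{prop:three-balls} to a constant number of $\delta$-simplices that jointly cover $B_{2\delta}(v)\cap S$, targeting the split $\constant = 1 + 6\cdot 3$. If $d(u,v)\le 2\delta$, then $\mathrm{diam}(S)=d(u,v)\le 2\delta$, so $S$ itself is a $\delta$-simplex and Proposition~\ref{prop:three-balls} covers $S$ (hence $B_{2\delta}(v)\cap S$) with $3\le\constant$ balls. From now on, assume $d(u,v)>2\delta$.

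In this case the closed ball $B_\delta(v)$ is already one ball of radius $\delta$ covering $B_\delta(v)\cap S$, and it remains to cover the annular region $A:=(B_{2\delta}(v)\setminus B_\delta^\circ(v))\cap S$ by six $\delta$-simplices. I construct these as geodesic triangles $T_i:=\conv(\{v,w_i,w_{i+1}\})$ for $i=0,\dots,5$, where $w_0,w_1,\dots,w_5,w_6=w_0$ is a cyclic sequence of points on the sphere $\{w\in\mathcal{S}:d(v,w)=2\delta\}$ chosen so that $d(w_i,w_{i+1})\le 2\delta$ for every $i$. The three pairwise distances among $v,w_i,w_{i+1}$ are then at most $2\delta$; since convex hulls on Busemann surfaces preserve the diameter of a finite set (a property developed in Subsection~\ref{subsection:auxiliary}), each $T_i$ has diameter at most $2\delta$ and is a $\delta$-simplex. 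Proposition~\ref{prop:three-balls} then covers each $T_i$ by $3$ balls of radius $\delta$, for a grand total of $1+6\cdot 3=\constant$ balls.

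Two ingredients remain to be justified: the existence of the cyclic sequence of six points and the verification that the six triangles cover $A$. For existence, the planarity of $\mathcal{S}$ together with the geodesic extension property endows the sphere $\{w:d(v,w)=2\delta\}$ with a natural cyclic order, and its ``angular spread'' can be controlled via the Busemann inequality of Proposition~A. The diametral pair $u,v$ is exploited to orient the construction by the geodesic $[v,u]$ and its extension past $v$; this both gives a canonical starting point on the sphere and restricts the portion of the sphere reachable by points of $S$ (any $w\in S$ satisfies $d(u,w)\le d(u,v)$, forbidding directions ``too close'' to the extension of $[u,v]$ past $v$).

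The main obstacle is showing that the six triangles $T_i$ actually cover $A$: for each $w\in A$ one must verify that the geodesic $[v,w]$ lies inside some $T_i$. This combines the Jordan-curve structure of the planar Busemann surface with the convexity properties of geodesic triangles (Pasch and Peano axioms, convexity of shades of geodesic segments) recalled in Subsection~\ref{subsection:auxiliary}: the point $w$ determines a direction at $v$ that falls in the arc between two consecutive boundary points $w_i$ and $w_{i+1}$, and the line-separation property of a triangle from external points then forces $w\in T_i$.
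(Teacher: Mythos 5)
There is a genuine gap, and in fact the proposed decomposition is already wrong in the Euclidean plane, the simplest Busemann surface. If $w_0,\dots,w_5$ are six points on the circle $\partial B_{2\delta}(v)$ with $d(w_i,w_{i+1})=2\delta$ (equally spaced), then each $T_i=\Delta(v,w_i,w_{i+1})$ is an equilateral triangle whose side $[w_i,w_{i+1}]$ lies at distance $\sqrt{3}\,\delta<2\delta$ from $v$; the lens-shaped region between that chord and the arc of the circle lies in your annulus $(B_{2\delta}(v)\setminus B^{\circ}_{\delta}(v))$ but in none of the $T_i$. So the six triangles with vertices \emph{on} the sphere do not cover the annulus, and the reduction to Proposition~\ref{prop:three-balls} collapses. (One could repair this in ${\mathbb E}^2$ by replacing triangles with circular sectors or by pushing the $w_i$ outward, but then the sets are no longer of the form you can control on a general Busemann surface.)

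The deeper problem is the existence claim you defer to ``angular spread.'' The Remark following Proposition~\ref{prop:twenty-three-balls} exhibits Busemann surfaces in which $\partial B_{2\delta}(v)$ contains $n$ points that are pairwise at distance $4\delta$, for arbitrary $n$; no six points in cyclic order with consecutive gaps $\le 2\delta$ can have their triangles reach all of that sphere. The only thing that rescues the statement is the hypothesis that $u,v$ is a diametral pair of $S$, which forces every $z\in S$ to satisfy $d(u,z)\le d(u,v)$ and thereby confines $S\cap B_{2\delta}(v)$ to a bounded ``angular'' portion of the ball. You mention this, but making it quantitative is precisely where all the work lies, and your plan contains none of it. The paper's proof is devoted exactly to this point: it cuts by a line extending $[u,v]$, picks on each side the point $w\in\mathrm{conv}(S')\cap\partial B_{2\delta}(v)$ maximizing $\pi(u,v,w)$, and then proves (Claims 2--4 of that proof, via Lemma~\ref{quadrangle} and Lemma~\ref{perimeter}) that outside a quadrilateral $A$ with sides $\le 2\delta$ (covered directly by $4$ balls), the two remaining regions $B$ and $C$ each have diameter $\le 2\delta$. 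Your proposal replaces this analysis with an unproved assertion, so as it stands it does not establish the proposition.
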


The idea of proof of Proposition~\ref{prop:twenty-three-balls} is to
partition the set $B_{2{\delta}}(v)\cap S$ into six regions, four of
them of diameter $\le 2{\delta}$ and to which we can apply Proposition
~\ref{prop:three-balls} and two regions which can be covered with
eight balls.

\begin{remark}
Notice that Busemann surfaces (unlike
Euclidean and hyperbolic planes) do not have bounded doubling dimension,
i.e., not every ball $B_{2\delta}(v)$ of radius $2\delta$ can be
covered with a fixed number of balls of radius $\delta$. Indeed, for
any positive integer $n$, the star $S_n$ with $n$ leaves $u_1,\ldots
u_n$, center $v$, and length $2\delta$ of all edges can be embedded
isometrically into a Busemann surface ${\mathcal S}_n$ in the
following way. First embed $S_n$ into a star $\hat{S}_n$ consisting of
$n$ rays $R_i$, $i=1,\ldots,n$, with center $v$, where $R_i$ is the
ray passing via the leaf $u_i$ of $S_n$. Notice that the union
$L_{i,j}$ of any two distinct rays $R_i$ and $R_j$ is isomorphic to the
real line ${\mathbb R}$.  To each line $L_{i,i+1},$ $i=1,\ldots n$
(where $i+1$ is taken modulo $n$), of $\hat{S}_n$ we add a closed
halfplane $H_{i,i+1}$ of ${\mathbb R}^2$ so that $L_{i,i+1}=R_i\cup
R_{i+1}$ is the boundary of $H_{i,i+1}$. Two consecutive halfplanes
$H_{i-1,i}$ and $H_{i,i+1}$ intersect in the common ray $R_i$. Two
nonconsecutive halfplanes intersect only in the center $v$ of
$S_n$. Let ${\mathcal S}_n$ be the planar surface obtained as the
union of the $n$ closed halfplanes $H_{i,i+1}$, $i=1,\ldots n$.  It can
be easily seen that the resulting planar surface ${\mathcal S}_n$ is
Busemann (if fact, it is CAT(0)) and that $S_n$ and $\hat{S}_n$ are
isometrically embedded into ${\mathcal S}_n$. Now, consider the ball
$B_{2\delta}(v)$ of ${\mathcal S}_n$ centered at the center $v$ of
$S_n$. Since the distance from $v$ to any of the leaves $u_i$ of $S_n$
in $S_n$ and ${\mathcal S}_n$ is $2\delta$, $\{ u_1,\ldots,
u_n\}\subset B_{2\delta}(v)$. On the other hand, since the distance in
$S_n$ and ${\mathcal S}_n$ between any two different leaves $u_i$ and
$u_j$ is $4\delta$, any covering in ${\mathcal S}_n$ of the set $\{
u_1,\ldots, u_n\}$ with balls of radius $\delta$ requires at least $n$
balls. Consequently, any covering of $B_{2\delta}(v)$ with balls of
radius $\delta$ requires at least $n$ balls.
\end{remark}

\section{Proofs}

In this section, we provide the proofs of Propositions 1-3.
We start with the proof of Proposition \ref{weak-doubling}, presented
in Subsection \ref{subsection:weak-dubling}. The proofs of
Propositions~\ref{prop:three-balls} and \ref{prop:twenty-three-balls}
require some geometric properties of Busemann surfaces, which we
present in Subsection~\ref{subsection:auxiliary}. The proof of
Proposition~\ref{prop:three-balls} is presented in
Subsection~\ref{subsection:three-balls} and the proof of
Proposition~\ref{prop:twenty-three-balls} is given in
  Subsection~\ref{subsection:twenty-three-balls}.

\subsection{Proof of Proposition~\ref{weak-doubling}}\label{subsection:weak-dubling}

In this subsection, we will prove Proposition~\ref{weak-doubling}, which we recall now:

\medskip\noindent
{\bf Proposition 1}. {\it If a complete metric space $(X,d)$ satisfies the weak doubling
property with constant $c$, then for any compact set $S\subseteq X$
and any ${\delta}>0$, $\rho_{\delta}(S)\le c\nu_{\delta}(S)$.
}

\begin{proof}
The proof of Proposition~\ref{weak-doubling} is algorithmic and builds
simultaneously (in a primal-dual way) a covering $\mathcal C$ of $S$
with closed ${\delta}$-balls and an open packing $P$ of $S$ satisfying
the inequality $|\mathcal C|\le c|P|$.  Since $P$ is an open packing
and $S$ is compact, $|P|\le \nu^{\circ}(S)\le \rho^{\circ}(S)<\infty$,
thus $P$ and $\mathcal C$ are finite and their construction requires a
finite number of steps.  Then using local perturbations, we will show
how to transform $P$ into a packing $P'$ of the same size as $P$.

Start by setting $S^*_0:=S$, $S_0:=S$, $\mathcal C:=\varnothing$,
$P:=\varnothing$, and $i=0$. While $S_i\ne \varnothing$, set
$S^*_i:=\overline{S}_i$ (the closure of $S_i$). Since $(X,d)$ is
complete, $S^*_i$ is compact. Since $(X,d)$ satisfies the weak
doubling property, $S^*_i$ contains a point $v$ such that the set
$B_{2{\delta}}(v)\cap S^*_i$ can be covered with $k\le c$ balls
$B_{\delta}(x_1),\ldots, B_{\delta}(x_k)$ of radius ${\delta}$ of
$X$. Add the balls $B_{\delta}(x_1),\ldots, B_{\delta}(x_k)$ to the
covering $\mathcal C$, denote the point $v$ by $p_i$ and add it to
$P$. Finally, set
$S_{i+1}:=S_i\setminus(\bigcup_{j=1}^kB_{\delta}(x_j))$ and
$S^*_{i+1}:=\overline{S}_{i+1}$, and apply the algorithm to these two
new sets.

We claim that $P$ is an open packing of $S$. Pick any pair of points
$p_i,p_j\in P$ and let $j<i$. Then $p_i$ is either a point of $S_i$ or
$p_i$ is the limit of an infinite sequence $\{ s_t\}$ of points of
$S_i$. From its definition, the set $S_i$ consists of all yet not
covered by $\mathcal C$ points of $S$; in particular, we have $S_i\cap
(\bigcup_{k=1}^{i-1}B_{2{\delta}}(p_k))=\varnothing$. Consequently, if
$p_i\in S_i$, since $p_i\notin B_{2{\delta}}(p_j)$, we conclude that
$d(p_i,p_j)>2{\delta}$ in this case. Now, suppose that $p_i$ is the
limit of a sequence $\{ s_t\}$ of points of $S_i$. If
$d(p_i,p_j)<2{\delta}$, then for any $\varepsilon>0$ such that
$d(p_i,p_j)+\varepsilon<2{\delta}$, all points of $\{ s_t\}$ except a
finite number will be in the $\varepsilon$-neighborhood of $p_i$. For any
such point $s_t$, we will have $d(s_t,p_j)\le d(s_t,p_i)+d(p_i,p_j)\le
\varepsilon+d(p_i,p_j)<2{\delta}$, contrary to the choice of $s_t$ from
$S_i$. This contradiction shows that $P$ is an open packing of
$S$. Consequently, $P$ and $\mathcal C$ are finite, and from their
construction, $|\mathcal C|\le c|P|$.

Now, we will show how to transform the finite open packing $P=\{
p_1,\ldots, p_n\}$ of $S$ into a packing $P'$ of the same size. For
this we will move each point of $P$ at most once. We proceed the
points of $P$ in the reverse order and for each point $p_i$ of $P$
either we include it in $P'$ (and denote it by $p'_i$) or include in
$P'$ a point $p'_i\in S_i$. Suppose that after proceeding the points
$p_n,\ldots,p_{i+1},$ the set $P'$ has the form $P'=\{
p_1,\ldots,p_i,p'_{i+1},\ldots, p'_n\}$ and satisfies the following
invariants: (a) $d(p_j,p'_k)>2{\delta}$ for any $j=1,\ldots,i$ and
$k=i+1,\ldots,n$ and (b) $d(p'_j,p'_k)>2{\delta}$ for any $i+1\le
j<k\le n$.  We will show how to proceed the point $p_i$ to keep valid
the invariants (a) and (b). If $d(p_i,p_j)>2{\delta}$ for any $j<i$,
then we simply set $p_i'=p_i$ and obviously (a) and (b) are
preserved. Otherwise, suppose that there exists a point $p_j$ with
$j<i$ such that $d(p_i,p_j)=2{\delta}$. By the construction of $P$ and
the argument in the proof that $P$ is an open packing, we conclude
that $p_i\notin S_i$ and therefore $p_i$ is a limit of an infinite
sequence $\{ s_t\}$ of points of $S_i$. In the basis case $i=n$ we
simply pick as $p'_n$ any point from the sequence $\{
s_t\}$. Obviously, the conditions (a) and (b) will be preserved. Now,
suppose that $i<n$.  Let $\varepsilon:=\min \{ d(p_i,p'_k) - 2 \delta :
k>i\}$.  Clearly, $\varepsilon>0$. Pick as $p'_i$ any point of the
sequence $\{ s_t\}$ lying in the $\frac{\varepsilon}{2}$-neighborhood of
$p_i$. Then $d(p_j,p'_i) > 2 \delta$ for any $j < i$, because $p'_i\in
S_i$. Also $d(p'_i,p'_k) > 2 \delta$ for any $k > i$ because by triangle
inequality $d(p'_i,p'_k) > d(p_i,p'_k) - d(p'_i,p_i) > d(p_i,p'_k) -
\frac{\varepsilon}{2} > 2{\delta}$. This shows that after proceeding all
points of $P$, we will obtain a set $P'$ of $n$ points of $S$,
satisfying the conditions (a) and (b), i.e., a packing of $S$. This
finishes the proof of Proposition~\ref{weak-doubling}.
\end{proof}

\subsection{Auxiliary results}\label{subsection:auxiliary}

In this subsection, we present some elementary properties of Busemann
planar surfaces. We start with some fundamental properties of all Busemann spaces.

\begin{lemma} \label{unique_geodesics} \cite[Proposition 8.1.4]{Pa} A Busemann space is uniquely geodesic.
\end{lemma}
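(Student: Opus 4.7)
The plan is to argue directly from the convexity of the distance function between geodesics, which is the defining property of a Busemann space. Suppose for contradiction that two distinct geodesics $\gamma, \gamma' : [0, l] \to X$ both join $x$ to $y$, where $l = d(x,y)$. Since both are parametrized by arc length on the same interval $[0,l]$, they are in particular affinely reparametrized (constant speed) geodesics, and they share the same initial point $\gamma(0) = \gamma'(0) = x$ and same terminal point $\gamma(l) = \gamma'(l) = y$.

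The cleanest route is to invoke characterization (iv) from Proposition~A. Applying it to $\gamma$ and $\gamma'$ (which have the common initial point $x$ and both are defined on $[0,l]$), for every $t \in [0,1]$ one obtains
\[
d(\gamma(t \cdot l), \gamma'(t \cdot l)) \;\le\; t \cdot d(\gamma(l), \gamma'(l)) \;=\; t \cdot d(y,y) \;=\; 0.
\]
Hence $\gamma(t \cdot l) = \gamma'(t \cdot l)$ for all $t \in [0,1]$, which means the two geodesics coincide pointwise, contradicting the assumption that they were distinct. Alternatively, one can reparametrize $\gamma, \gamma'$ onto $[0,1]$ and appeal directly to the definition of a Busemann space: the convexity inequality $d(\gamma(t), \gamma'(t)) \le (1-t) d(x,x) + t\, d(y,y) = 0$ gives the same conclusion.

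There is no real obstacle in this argument: the uniqueness of geodesics in a Busemann space is an immediate consequence of the convexity of the distance function between geodesics, and the only care needed is to present the two candidate geodesics as affinely reparametrized geodesics on a common parameter interval so that the hypotheses of the defining inequality (or of Proposition~A(iv)) apply verbatim.
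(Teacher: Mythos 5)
Your argument is correct and is exactly the standard proof: the paper itself only cites \cite[Proposition 8.1.4]{Pa}, and the proof there is precisely this convexity argument (two geodesics with the same endpoints satisfy $d(\gamma(t),\gamma'(t))\le (1-t)\cdot 0 + t\cdot 0 = 0$, hence coincide). Both your routes --- via Proposition~A(iv) and via the defining inequality after reparametrizing to $[0,1]$ --- are valid and equivalent here.
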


\begin{lemma}\label{local} \cite[Corollary 8.2.3]{Pa}
Every local geodesic of a Busemann space $(X,d)$ is a geodesic.
\end{lemma}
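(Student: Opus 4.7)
The plan is a maximality argument on the parameter interval, driven by the convexity of the distance from a fixed point to a moving point along a geodesic in a Busemann space. Let $\gamma : [a,b] \to X$ be the given local geodesic, and set
\[
L := \sup\{ l \in [0, b-a] : \gamma|_{[a, a+l]} \text{ is a geodesic}\}.
\]
Local geodesy at $a$ gives $L > 0$, and since $\gamma$ is locally an isometry (hence continuous) and $d$ is continuous, a short routine argument shows that the supremum is attained, so $\gamma|_{[a, a+L]}$ is a geodesic. I aim to prove $L = b-a$; assume for contradiction that $T := a+L < b$, and use local geodesy at $T$ to choose $\varepsilon \in (0, b-T)$ such that $\gamma|_{[T-\varepsilon, T+\varepsilon]}$ is a geodesic. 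The target is then to show that $\gamma|_{[a, T+\varepsilon/2]}$ is also a geodesic, contradicting the maximality of $L$.

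The crux is to establish that $f(t) := d(\gamma(a), \gamma(t))$ is convex on $[T-\varepsilon, T+\varepsilon]$. I apply the Busemann convexity condition to an affine reparametrization on $[0,1]$ of $\gamma|_{[T-\varepsilon/2,\, T+\varepsilon/2]}$ (or of any sub-interval of $[T-\varepsilon, T+\varepsilon]$) paired with the constant map at $\gamma(a)$; the constant map qualifies as an affinely reparametrized geodesic because the paper's own definition allows the speed $\lambda$ to be $0$. This yields convexity of $f$. Since $\gamma|_{[a, T-\varepsilon/2]}$ is a sub-segment of the geodesic $\gamma|_{[a,T]}$, we have $f(T-\varepsilon/2) = L - \varepsilon/2$, while $f(T) = L$ and $f(T+\varepsilon/2) \le L + \varepsilon/2$ by the triangle inequality along $\gamma|_{[a,T]} \cup \gamma|_{[T, T+\varepsilon/2]}$. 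If $f(T+\varepsilon/2) < L + \varepsilon/2$, midpoint convexity at $T$ would give
\[
L = f(T) \le \tfrac{1}{2}\bigl(f(T-\varepsilon/2) + f(T+\varepsilon/2)\bigr) < \tfrac{1}{2}\bigl((L-\varepsilon/2)+(L+\varepsilon/2)\bigr) = L,
\]
a contradiction. Therefore $d(\gamma(a), \gamma(T+\varepsilon/2)) = (T+\varepsilon/2) - a$.

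From this single equality, a standard triangle-inequality chain promotes the claim to the full geodesic property on $[a, T+\varepsilon/2]$: for any $s \in [a,T]$ and $s' \in [T, T+\varepsilon/2]$, squeezing $d(\gamma(a), \gamma(s'))$ between its natural upper and lower bounds forces $d(\gamma(a), \gamma(s')) = s' - a$, and then using this as a lower bound for $d(\gamma(s), \gamma(s'))$ together with the obvious upper bound yields $d(\gamma(s), \gamma(s')) = s' - s$. This contradicts the maximality of $L$, so $L = b-a$ and $\gamma$ is a geodesic. The only non-routine ingredient is the convexity of $f$; however, thanks to the paper's permissive definition of affinely reparametrized geodesic (which allows $\lambda = 0$), it drops out of the Busemann condition with no limiting argument required, and I do not anticipate further obstacles.
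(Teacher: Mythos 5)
Your argument is correct. Note first that the paper does not prove this lemma at all: it is quoted as \cite[Corollary 8.2.3]{Pa}, so there is no in-paper proof to compare against. Your proof is the standard one for this fact and is sound. The maximal-interval setup, the attainment of the supremum by continuity, and the final triangle-inequality squeeze are all routine and correct. The key step---convexity of $f(t)=d(\gamma(a),\gamma(t))$ on a subinterval where $\gamma$ is geodesic---is legitimately obtained from the Busemann condition by pairing an affine reparametrization of that subarc with the constant map at $\gamma(a)$, which does qualify as an affinely reparametrized geodesic with $\lambda=0$ under the paper's definition (and, if one objected to $\lambda=0$, Proposition~A(ii) with $l'=0$ gives the same inequality). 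The midpoint-convexity contradiction forcing $d(\gamma(a),\gamma(T+\varepsilon/2))=(T+\varepsilon/2)-a$ is computed correctly. Two harmless omissions: you should also take $\varepsilon$ small enough that $T-\varepsilon\ge a$ (so that the two-sided geodesic neighbourhood of $T$ and the point $T-\varepsilon/2\in[a,T]$ both make sense), and you should reduce a general interval of definition $I$ to compact subintervals $[a,b]\subseteq I$, which is immediate. Neither affects the validity of the argument.
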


From these two lemmas immediately follows that geodesic lines of
Busemann spaces do not self-intersect.

Let $(X,d)$ be a Busemann space. For two points $x,y$ of $X$, we
denote by $[x,y]$ the unique geodesic segment joining $x$ and $y$. We
will also denote a line containing $x$ and $y$ by $(x,y)$ when there
is no ambiguity (there may be many such lines). A set
$R\subseteq {\mathcal S}$ is called {\it convex} if
$[p,q] \subseteq R$ for any $p,q \in R$.  For a set $Q$ of
$\mathcal S$ the smallest convex set conv$(Q)$ containing $Q$ is
called the {\it convex hull} of $Q$.  The next lemma immediately
follows from the definition of Busemann spaces.

\begin{lemma} \label{ball-convexity} \cite[Proposition 8.3.1]{Pa}
The open balls and closed balls  of a Busemann space $(X,d)$  are convex.
\end{lemma}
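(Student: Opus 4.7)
The plan is to derive convexity of balls directly from the defining convexity inequality for the distance function in a Busemann space, by taking the constant map at the ball's center as one of the two affinely reparametrized geodesics in the inequality.

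Fix $z \in X$, $\delta > 0$, and pick any two points $x, y \in B_{\delta}(z)$. I aim to show that the (unique, by Lemma~\ref{unique_geodesics}) geodesic segment $[x,y]$ is contained in $B_{\delta}(z)$. Let $\gamma \colon [0,1] \to X$ be the affine reparametrization of this segment, so $\gamma(0) = x$ and $\gamma(1) = y$. Let $\gamma' \colon [0,1] \to X$ be the constant map $\gamma'(t) \equiv z$; this is a (degenerate) affinely reparametrized geodesic with speed parameter $\lambda = 0$, since $d(\gamma'(t), \gamma'(t')) = 0 = 0 \cdot |t - t'|$. The Busemann convexity inequality applied to $\gamma$ and $\gamma'$ then gives, for every $t \in [0,1]$,
\[
d(\gamma(t), z) = d(\gamma(t), \gamma'(t)) \le (1-t)\,d(x,z) + t\,d(y,z) \le (1-t)\delta + t\delta = \delta,
\]
so $\gamma(t) \in B_{\delta}(z)$, establishing convexity of closed balls. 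The open-ball case is identical: if $d(x,z) < \delta$ and $d(y,z) < \delta$, the convex combination $(1-t)d(x,z) + t\,d(y,z)$ is strictly less than $\delta$ for every $t \in [0,1]$, so $\gamma(t) \in B^{\circ}_{\delta}(z)$.

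There is essentially no obstacle: the lemma is a one-line consequence of the definition. The only point requiring a brief justification is that constant maps qualify as affinely reparametrized geodesics (speed $0$), which is permitted by the paper's definition, since no positivity of $\lambda$ is required. If one wishes to avoid this degenerate case entirely, one can instead apply the convexity inequality to the geodesic from $x$ to $y$ and to the geodesic from $z$ to $y$ (both affinely parametrized on $[0,1]$), obtaining $d(\gamma(t), \gamma'(t)) \le (1-t)\,d(x,z)$, and conclude by the triangle inequality $d(z, \gamma(t)) \le d(z, \gamma'(t)) + d(\gamma'(t), \gamma(t)) \le t\,d(z,y) + (1-t)\,d(x,z) \le \delta$.
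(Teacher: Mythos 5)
Your proof is correct and matches the paper's treatment: the paper gives no proof of its own, simply citing \cite[Proposition 8.3.1]{Pa} and remarking that the lemma ``immediately follows from the definition of Busemann spaces,'' which is exactly the one-line derivation you spell out (with the constant map as a speed-$0$ affinely reparametrized geodesic, a degenerate case the paper's definition indeed permits). Your fallback argument avoiding the degenerate geodesic is also valid and requires nothing beyond the defining inequality and the triangle inequality.
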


A geodesic metric space $(X,d)$ is said to have the {\it geodesic
extension property} if the geodesic $[x,y]$ between any two
distinct points $x,y$ can be extended to a {\it geodesic line}, i.e., to a
line $(x,y)$ passing via $x$ and $y$.
Based on \cite[Footnote 24]{BrHa},  it was noticed in \cite[Lemma 1]{ChChNa}
that Busemann spaces have the extension property:

\begin{lemma} \label{extension}
Any Busemann surface ${\mathcal S}$ has the geodesic extension property.
\end{lemma}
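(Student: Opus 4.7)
The plan is to reduce the global extension problem to a purely local one: if for every geodesic segment $[x,y]$ with $x\neq y$ I can produce a point $y^+\neq y$ with $y\in[x,y^+]$, then concatenating $[x,y]$ with $[y,y^+]$ gives a local geodesic, which by Lemma \ref{local} is automatically a genuine geodesic. Iterating this extension by a fixed positive length on both sides, and taking the union of the resulting nested geodesic segments, yields a geodesic line through $x$ and $y$. So the whole proof reduces to constructing a one-sided local extension at the endpoint $y$.

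To construct the local extension, I would choose $\varepsilon>0$ small enough that the closed ball $\overline{B}_\varepsilon(y)$ is contained in a chart neighborhood $U$ homeomorphic to $\mathbb{R}^2$ (possible because $\mathcal{S}$ is a 2-manifold). By Lemma \ref{ball-convexity} the ball $\overline{B}_\varepsilon(y)$ is convex, by Lemma \ref{unique_geodesics} each geodesic $[y,z]$ with $z\in\overline{B}_\varepsilon(y)$ is unique, and by Proposition A these geodesics vary continuously with $z$. Consequently the polar map
\[
\Phi:S_\varepsilon(y)\times[0,\varepsilon]\to\overline{B}_\varepsilon(y),\qquad \Phi(z,t)=\gamma_{y,z}(t),
\]
is a continuous surjection onto $\overline{B}_\varepsilon(y)$; combined with the fact that $\overline{B}_\varepsilon(y)$ sits inside the topological disk $U$, one deduces that $S_\varepsilon(y)$ is a Jordan curve and $\overline{B}_\varepsilon(y)$ is a topological closed disk with boundary $S_\varepsilon(y)$. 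Let $w\in S_\varepsilon(y)$ be the unique point of $[x,y]$ at distance $\varepsilon$ from $y$.

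The key remaining step, and the main obstacle, is to exhibit an \emph{antipode} of $w$ through $y$, that is, a point $w'\in S_\varepsilon(y)$ with $d(w,w')=2\varepsilon$. Once such $w'$ is found, uniqueness of geodesics forces $y\in[w,w']$ (since $d(w,y)+d(y,w')=2\varepsilon=d(w,w')$), so the concatenation $[x,y]\cup[y,w']$ is a local geodesic at $y$ and Lemma \ref{local} upgrades it to a geodesic, producing the required extension $y^+:=w'$. To produce $w'$, I would argue by contradiction on the Jordan curve $S_\varepsilon(y)$: if the continuous function $z\mapsto d(w,z)$ on $S_\varepsilon(y)$ never attains its upper bound $2\varepsilon$, then every geodesic $[y,z]$ with $z\in S_\varepsilon(y)$ avoids the direction antipodal to $[y,w]$, so $\Phi$ misses an entire open sector of $\overline{B}_\varepsilon(y)$ on the opposite side of $y$ from $w$. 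This contradicts the surjectivity of $\Phi$ onto the topological disk $\overline{B}_\varepsilon(y)$. The 2-manifold hypothesis enters precisely here, through the Jordan curve theorem and the fact that $\overline{B}_\varepsilon(y)\setminus\{y\}$ is an annulus whose full circle of directions around $y$ must be realized by geodesics from $y$; in a general Busemann space the "link" at $y$ could be a tree or an interval, and the extension property would genuinely fail.
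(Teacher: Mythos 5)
The paper does not actually prove Lemma \ref{extension}: it imports it from \cite[Lemma 1]{ChChNa}, which in turn rests on \cite[Footnote 24]{BrHa}, so there is no in-paper argument to compare yours with. Your skeleton --- produce a one-sided local extension at $y$, observe that $d(w,w')=2\varepsilon$ forces $y\in[w,w']$, glue and invoke Lemma \ref{local} to upgrade the concatenation to a geodesic --- is sound as far as it goes. But the two load-bearing steps of your construction have genuine gaps. First, the surjectivity of the polar map $\Phi$ is asserted, not proved, and it is itself an instance of the property you are trying to prove: to say that every $p\in \overline{B}_\varepsilon(y)\setminus\{y\}$ equals $\gamma_{y,z}(t)$ for some $z\in S_\varepsilon(y)$ is to say that the geodesic $[y,p]$ can be prolonged beyond its endpoint $p$ until it reaches the sphere. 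Convexity of balls, uniqueness of geodesics and continuous dependence on endpoints give none of this, so the argument is circular at this point. Second, the concluding contradiction --- ``if no $z\in S_\varepsilon(y)$ satisfies $d(w,z)=2\varepsilon$, then $\Phi$ misses an open sector opposite $w$'' --- does not follow. The image of $\Phi$ is the union of the geodesics issuing from $y$; the non-existence of a point $w'$ with $y\in[w,w']$ is a metric alignment condition, and it does not by itself punch a hole in that union (``the direction antipodal to $[y,w]$'' and ``sector'' are exactly the notions whose good behaviour at $y$ is in question).

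The standard way to close both gaps, and the one behind the cited sources, is homological rather than by sectors: if $[x,y]$ admitted no extension beyond $y$, then the geodesic contraction $c_t(p):=$ the point of $[x,p]$ at distance $t\, d(x,p)$ from $x$ (continuous because geodesics of a Busemann space vary continuously with their endpoints) satisfies $c_t(p)\neq y$ for all $p\neq y$ and all $t$, since $c_t(p)=y$ with $t<1$ would exhibit $p$ as a point strictly beyond $y$ on an extension of $[x,y]$. Hence $c$ contracts $\mathcal{S}\setminus\{y\}$ to $x$ inside $\mathcal{S}\setminus\{y\}$, contradicting the fact that $\mathcal{S}\setminus\{y\}$ is homotopy equivalent to a circle because $\mathcal{S}\cong\mathbb{R}^2$. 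This route also avoids your reliance on compactness of $\overline{B}_\varepsilon(y)$ (properness of $\mathcal S$ is not assumed, so the supremum of $d(w,\cdot)$ on $S_\varepsilon(y)$ need not be attained) and on ``iterating by a fixed positive length'': your $\varepsilon$ depends on the endpoint, so to pass from ``extendable a little'' to a full geodesic line you still need a maximality-plus-completeness argument rather than a uniform step size.
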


From now suppose that $({\mathcal S},d)$ is a Busemann surface.
For a geodesic line $\ell$, we denote by $H'_{\ell}$ and $H''_{\ell}$
the unions of the two connected components of ${\mathcal S}\setminus
\ell$ with $\ell$. We call $H'_{\ell}$ and $H''_{\ell}$ {\it closed
halfplanes}. Since each line is convex, $H'_{\ell}$ and $H''_{\ell}$
are convex sets of $\mathcal S$. We will say that a line $\ell$ {\it separates}
two sets $A$ and $B$ if $A$ and $B$ belong to different closed halfplanes
defined by $\ell$.

For three points $x,y,z$ of $\mathcal S$, the {\it geodesic triangle}
$\partial\Delta(x,y,z)$ is the union of the three geodesics
$[x,y],[y,z],$ and $[z,x]$. We will call the closed bounded region
$\Delta(x,y,z)$ of $\mathcal S$ bounded by $\partial\Delta(x,y,z)$ the
{\it triangle} with vertices $x,y,z$.  We will say that the triangle
$\Delta(x,y,z)$ is {\it degenerated} if the points $x,y,z$ are
collinear, i.e., one of these points belongs to the geodesic between
the other two. By a {\it (convex) quadrangle} we will
mean the convex hull of four point $x,y,z,v$ in convex position, i.e.,
neither of the four points is in the convex hull of the other
three. For two distinct points $u,x \in {\mathcal S}$, let
$\Sp_u(x):=\{ p \in {\mathcal S}: x \in [u,p]\}$; we will call the set
$\Sp_u(x)$ a {\it cone}. Since $\mathcal S$ satisfies the geodesic
extension property, the set $\Sp_y(x) \cup [x,y] \cup \Sp_x(y)$ can be
equivalently defined as the union of all geodesic lines extending
$[x,y]$.

We continue by recalling some results from \cite{ChChNa}.
We start with a Pasch axiom, which we formulate in a slightly stronger
but equivalent form:

\begin{lemma}\cite[Lemma 6]{ChChNa}  \label{Pasch} (Pasch axiom)
  If $\Delta(x,y,z)$ is a triangle, $u \in [x,y], v\in [x,z]$, and $p \in
  [y,z],$ then $[u,v] \cap [x,p] \ne \varnothing$.
\end{lemma}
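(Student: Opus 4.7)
The plan is to extend $[u,v]$ to a geodesic line $\ell$ and then reduce the statement to a combination of a separation argument and a continuity/intermediate-value argument along $\ell$. First, I invoke Lemma \ref{extension} to extend $[u,v]$ to a full geodesic line $\ell$, which partitions $\mathcal S$ into two closed halfplanes $H'_\ell, H''_\ell$, both convex with common boundary $\ell$. After dispatching the trivial cases in which $u$, $v$, or $p$ coincides with a vertex of $\Delta(x,y,z)$ (where the intersection is then witnessed by that vertex), I may assume $u,v,p$ lie in the interiors of their respective sides.

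Next, I establish that $\ell$ strictly separates $x$ from $\{y,z\}$. The key observation is that $[x,y]$ meets $\ell$ only at $u$: if the two geodesics shared a subsegment of positive length, then combining Lemmas \ref{unique_geodesics}, \ref{local}, and \ref{extension} would force the full line through $x$ and $y$ to coincide with $\ell$; applying the same reasoning to $[x,z]$ and $v$ would in addition place $z$ on $\ell$, contradicting the non-degeneracy of $\Delta(x,y,z)$. Since $u$ is interior to $[x,y]$, the points $x$ and $y$ must therefore lie in opposite open halfplanes of $\ell$, and the analogous argument via $v \in [x,z]$ places $z$ on the same open side as $y$. Convexity of the closed halfplane then yields $[y,z] \subseteq \overline{H''_\ell}$, hence $p \in \overline{H''_\ell}$.

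The segment $[x,p]$ therefore runs from $H'_\ell \setminus \ell$ to $\overline{H''_\ell}$ and, by connectedness, meets $\ell$ in a single point $w$ (uniqueness of $w$ again by the no-shared-subsegment argument). To localize $w$ inside $[u,v]$, I parametrize $[y,z]$ continuously by $t \mapsto p_t$ with $p_0 = y$, $p_1 = z$, and $p = p_{t^*}$. Since geodesics in a Busemann space vary continuously with their endpoints, the crossing point $w_t := [x,p_t] \cap \ell$ depends continuously on $t$, with $w_0 = u$ and $w_1 = v$. By the intermediate value theorem applied along $\ell \cong \mathbb R$, the image of $t \mapsto w_t$ covers the whole segment $[u,v]$ of $\ell$; in particular $w_{t^*} \in [u,v]$, and this point is the desired element of $[u,v] \cap [x,p]$.

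The main obstacle I expect is making the crossing map $w_t$ genuinely well-defined and continuous. This reduces to two Busemann-geometric facts: that no two distinct geodesic lines share a subsegment of positive length (which gives uniqueness of $w_t$), and that the continuous variation of $[x,p_t]$ with $t$ lifts to continuity of its unique transverse intersection with the fixed line $\ell$. Both are standard consequences of Lemmas \ref{unique_geodesics}--\ref{extension} combined with the planar topology of $\mathcal S$; once they are in hand, the intermediate value theorem closes the argument.
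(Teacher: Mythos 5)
The paper offers no proof of this lemma at all: it is imported verbatim from \cite{ChChNa} (Lemma 6 there), so there is no in-paper argument to match yours against and it must be judged on its own. Doing so, I see two genuine gaps. The more serious one is the separation step: from ``$[x,y]$ meets $\ell$ in the single interior point $u$'' you conclude that $x$ and $y$ lie in opposite open halfplanes of $\ell$. In a Busemann surface this inference is invalid: two geodesics can meet at a single point and be tangent there without crossing --- this is precisely the phenomenon of Lemma~\ref{two-lines}, and the paper itself warns about it when defining shades (``two lines may be tangent without crossing each other\dots\ $x$ and $y$ are not necessarily separated''). A line extending $[u,v]$ could a priori be tangent to $[x,y]$ at $u$ and to $[x,z]$ at $v$, leaving $x$, $y$, $z$ --- and hence, by Lemma~\ref{triangle}, the whole triangle and in particular $[x,p]$ --- inside one closed halfplane, at which point your connectedness argument for $[x,p]\cap\ell\neq\varnothing$ gives nothing. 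You would need to rule tangency out in this configuration or reroute $\ell$ via Lemma~\ref{two-lines}, and you do neither; ruling out a shared subsegment does not do the job.

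The second gap is the intermediate-value step: proving that the image of $t\mapsto w_t$ \emph{covers} $[u,v]$ shows that every point of $[u,v]$ equals some $w_t$, not that your particular $w_{t^*}$ lies in $[u,v]$ --- the inclusion points the wrong way. (If you did have transversal crossings at $u$ and $v$, this would be repaired without any continuity: $[x,p]\subseteq\Delta(x,y,z)$ by Lemma~\ref{triangle}, and genuine crossings force $\ell\cap\Delta(x,y,z)=[u,v]$, so $w_{t^*}\in[u,v]$ immediately.) A route that avoids both difficulties is already set up in the paper's preliminaries on cuts of Jordan domains: by convexity of triangles, $[u,v]$ and $[x,p]$ are cuts of the region $\Delta(x,y,z)$ whose endpoints occur in the interleaved cyclic order $x,u,p,v$ on the boundary curve $\partial\Delta(x,y,z)$, and the stated fact that two cuts with interleaved endpoints must cross yields $[u,v]\cap[x,p]\neq\varnothing$ with no separation or continuity argument at all.
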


\begin{lemma}  \cite[Lemma 7]{ChChNa} \label{cones}
  The cone $\Sp_u(x)$ is a convex and closed subset of $\mathcal S$.
\end{lemma}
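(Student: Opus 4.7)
The plan is to verify the two claims of the lemma separately, each invoking one tool already at hand: unique geodesicity of Busemann spaces (Lemma \ref{unique_geodesics}) for closedness, and the Pasch axiom (Lemma \ref{Pasch}) for convexity.

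For closedness I would work from the standard characterisation $x\in[u,p] \iff d(u,p)=d(u,x)+d(x,p)$. If $(p_n)\subseteq \Sp_u(x)$ converges to $p\in\mathcal S$, continuity of the metric gives $d(u,p)=d(u,x)+d(x,p)$. The concatenation $[u,x]\cup[x,p]$ is then a path from $u$ to $p$ of length $d(u,p)$, hence a geodesic; by uniqueness of geodesics in a Busemann space it must coincide with $[u,p]$, so $x\in[u,p]$ and $p\in \Sp_u(x)$.

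For convexity I would pick $p,q\in \Sp_u(x)$ and $r\in[p,q]$ and prove $x\in[u,r]$. Apply Pasch to the triangle $\Delta(u,p,q)$ with $U:=x\in[u,p]$, $V:=x\in[u,q]$, and $P:=r\in[p,q]$: the conclusion $[U,V]\cap[u,P]\ne\varnothing$ collapses to $\{x\}\cap[u,r]\ne\varnothing$, i.e.\ $x\in[u,r]$, which is exactly what is required. Hence $r\in \Sp_u(x)$.

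The only real obstacle is the degenerate situation in which $\Delta(u,p,q)$ is not a genuine triangle, either because two of $u,p,q$ coincide or because $u,p,q$ are collinear. These cases do not require Pasch: if $p=q$ then $r=p\in \Sp_u(x)$ trivially; if $u=p$ or $u=q$ then $x=u$ forces $\Sp_u(x)=\mathcal S$; and if $u,p,q$ lie on a common geodesic line, then $r$ also lies on that line and the identity $d(u,r)=d(u,x)+d(x,r)$ can be read off from the linear ordering on this line together with unique geodesicity, giving $x\in[u,r]$ directly.
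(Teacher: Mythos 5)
First, a caveat: the paper does not actually prove this lemma---it imports it verbatim as Lemma 7 of \cite{ChChNa}---so there is no in-paper proof to compare yours against; I can only assess your argument on its own terms.

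Your closedness argument is correct: the characterisation $x\in[u,p]\Leftrightarrow d(u,p)=d(u,x)+d(x,p)$ is valid in a uniquely geodesic space (a concatenation of geodesics whose total length equals the distance between its endpoints is itself a geodesic, hence \emph{the} geodesic), and the displayed equality passes to limits. The convexity argument is the natural route, but it has one fragile joint: you instantiate the Pasch axiom with the two points on the sides $[u,p]$ and $[u,q]$ both equal to $x$, so that the transversal $[U,V]$ degenerates to the single point $\{x\}$. As stated, Lemma~\ref{Pasch} does not forbid this, but this degenerate instance is essentially the very statement you are proving (that $[u,r]$ passes through $x$), so leaning on it is uncomfortably close to circular---particularly in a Busemann surface, where $[u,p]$ and $[u,q]$ may genuinely share a nontrivial initial segment through $x$ and branch only afterwards, a configuration that your coincident/collinear case analysis does not cover. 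The repair is short: pick $x_\varepsilon\in[u,p]$ and $x'_\varepsilon\in[u,q]$ at distance $d(u,x)+\varepsilon$ from $u$ (if the two geodesics still coincide there, pass to their branch point $y$, which satisfies $x\in[u,y]$, and argue at $y$ instead); apply the non-degenerate Pasch axiom to obtain $z_\varepsilon\in[x_\varepsilon,x'_\varepsilon]\cap[u,r]$; then $d(z_\varepsilon,x)\le d(x_\varepsilon,x'_\varepsilon)+\varepsilon\le 3\varepsilon$, and closedness of $[u,r]$ gives $x\in[u,r]$ in the limit. A last cosmetic point: the subcase $u=p$ cannot arise, since the cone is only defined for $u\ne x$ and then $u\notin\Sp_u(x)$; your remark that $x=u$ would force $\Sp_u(x)=\mathcal S$ is moot.
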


\begin{lemma} \cite[Lemma 8]{ChChNa} \label{triangle}
  $\Delta(x,y,z)$ coincides with the convex hull of $x,y,z$.
\end{lemma}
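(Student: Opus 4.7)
The plan is to prove $\Delta(x,y,z)=\conv(\{x,y,z\})$ by showing that both sets coincide with the auxiliary set
\[
T\;:=\;\bigcup_{w\in[y,z]}[x,w].
\]
The inclusion $T\supseteq\conv(\{x,y,z\})$ will follow from proving that $T$ is convex, while $T\subseteq\conv(\{x,y,z\})$ is immediate since any convex set containing $\{x,y,z\}$ must contain $[y,z]$ and then all the geodesics $[x,w]$ with $w\in[y,z]$. It then remains to identify $T$ with $\Delta(x,y,z)$.

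For $\Delta(x,y,z)\subseteq T$, I would take $p\in\Delta(x,y,z)$ with $p\ne x$ and extend $[x,p]$ to a geodesic line $\ell$ via Lemma~\ref{extension}. Since $\Delta(x,y,z)$ is bounded by the Jordan curve $\partial\Delta(x,y,z)=[x,y]\cup[y,z]\cup[z,x]$ in the planar surface $\mathcal S$, the half-line of $\ell$ issuing from $x$ through $p$ must exit $\Delta(x,y,z)$ at some first boundary point $q$. If $q\in[x,y]$, then the initial part of $\ell$ from $x$ to $q$ and the subsegment of $[x,y]$ from $x$ to $q$ are both geodesics, so Lemma~\ref{unique_geodesics} forces them to coincide, giving $p\in[x,y]\subseteq T$; the case $q\in[x,z]$ is analogous, and if $q\in[y,z]$ then simply $p\in[x,q]\subseteq T$. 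Conversely, for $T\subseteq\Delta(x,y,z)$, each geodesic $[x,w]$ with $w\in[y,z]$ has both endpoints on $\partial\Delta(x,y,z)$ and, by the same uniqueness argument, cannot meet $[x,y]$ or $[x,z]$ in its relative interior, so it stays inside the bounded region by Jordan separation.

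The key step is then showing that $T$ is convex. Given $p,q\in T$, write $p\in[x,a]$ and $q\in[x,b]$ with $a,b\in[y,z]$, and take any $r\in[p,q]$. For every $c\in[a,b]\subseteq[y,z]$, the Pasch axiom (Lemma~\ref{Pasch}) applied to the triangle with vertices $x,a,b$ produces a point $\phi(c)\in[p,q]\cap[x,c]$; one chooses $\phi(a)=p$ and $\phi(b)=q$. Using the continuity of geodesics in their endpoints (a standard property of Busemann spaces recorded after Proposition~A), $\phi$ is continuous, so its image is a connected subarc of $[p,q]$ containing both endpoints; it is therefore surjective, and every $r\in[p,q]$ lies on some $[x,c]\subseteq T$. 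The main obstacle is precisely this convexity step: one must verify that a continuous selection $\phi$ for the Pasch intersection actually exists, handle the degenerate situations in which $[p,q]$ overlaps nontrivially with some $[x,c]$ or $x,y,z$ are collinear, and justify the intermediate-value argument along $[p,q]$. The planar Jordan-curve reasoning used in the previous step is comparatively routine but still needs to be spelled out because it relies on $\mathcal S$ being a $2$-manifold.
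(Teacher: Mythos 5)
The paper does not prove this statement: it is quoted verbatim from \cite[Lemma 8]{ChChNa}, so there is no in-paper proof to compare yours against, and I assess the proposal on its own terms. Your architecture (identify $\conv(\{x,y,z\})$ with $T=\bigcup_{w\in[y,z]}[x,w]$ and then with $\Delta(x,y,z)$) is reasonable, and two of the three steps are essentially sound. The inclusion $\Delta(x,y,z)\subseteq T$ via extending $[x,p]$ and taking the last point of the resulting ray inside the closed bounded region does work. The convexity of $T$ is precisely the Peano axiom (Lemma~\ref{Peano}) applied to the sub-triangle $\Delta(x,a,b)$ with $[a,b]\subseteq[y,z]$, so within this paper you could simply invoke that quoted lemma (beware that in the source \cite{ChChNa} Peano is Lemma~9 and follows the present Lemma~8, so this may be circular there). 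If you instead derive it from Pasch (Lemma~\ref{Pasch}), note that a continuous single-valued selection $\phi$ need not exist when $[p,q]$ overlaps some $[x,c]$ in a nondegenerate segment; the standard repair is to show that the two sets $\{c\in[a,b]:[x,c]\cap[p,r]\neq\varnothing\}$ and $\{c\in[a,b]:[x,c]\cap[r,q]\neq\varnothing\}$ are closed (continuity of geodesics in their endpoints plus compactness), cover $[a,b]$ by Pasch, and contain $a$ and $b$ respectively, hence meet at some $c$ for which one then checks $r\in[x,c]$.

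The genuine gap is the inclusion $T\subseteq\Delta(x,y,z)$. Your justification --- that $[x,w]$ meets $\partial\Delta(x,y,z)$ only at its endpoints ``by uniqueness'', hence stays in the bounded region ``by Jordan separation'' --- fails on both counts. First, uniqueness of geodesics does not prevent $[x,w]$ from meeting $[x,y]$ at a point $s\neq x$: it only shows that $[x,w]\cap[x,y]$ contains the common initial segment $[x,s]$, and ruling out $s\neq x$ is itself something to prove. Second, and more seriously, even if $[x,w]$ met the curve $\partial\Delta(x,y,z)$ only at $x$ and $w$, Jordan separation would not place its interior in the bounded component: both endpoints lie \emph{on} the separating curve, so the open arc could equally well lie entirely in the unbounded component (a chord of a circle and an arc running around the outside are indistinguishable from this point of view). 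To close the gap one must actually exclude an excursion of $[x,w]$ into the exterior region, e.g.\ by taking a maximal sub-arc $[a',b']\subseteq[x,w]$ whose interior lies in the exterior and whose endpoints lie on $\partial\Delta(x,y,z)$, killing the case where $a'$ and $b'$ lie on a common side by uniqueness of geodesics, and handling the cross-side case with further tools (Pasch, convexity of cones from Lemma~\ref{cones}, or Lemma~\ref{two-lines}); the collinear degenerate case also needs a separate word. As written, the topological half of your argument asserts the conclusion rather than proving it.
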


\begin{lemma}  \cite[Lemma 9]{ChChNa} \label{Peano} (Peano axiom)
  If $\Delta(x,y,z)$ is a triangle, $p \in [x,y]$, $q \in
  [x,z]$, and $u \in [p,q],$ then there exists a point $v \in [y,z]$ such
  that $u \in [x,v]$.
\end{lemma}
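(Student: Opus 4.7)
The plan is to extend the geodesic $[x,u]$ past $u$ via Lemma \ref{extension} to a geodesic ray $r$ starting at $x$ and passing through $u$, and then to analyze where $r$ first leaves the triangle $\Delta(x,y,z)$. The exit point will serve as the candidate $v$. The key observation is that, by uniqueness of geodesics (Lemma \ref{unique_geodesics}), the initial subarc of $r$ up to this exit point is precisely the geodesic $[x,v]$, so $u$ automatically lies on $[x,v]$.

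Concretely, Lemma \ref{triangle} gives $\Delta(x,y,z) = \mathrm{conv}\{x,y,z\}$, so the triangle is convex and (being closed and bounded in the proper geodesic space $(\mathcal{S},d)$) compact. The ray $r$ is an isometric image of $[0,\infty)$, so its intersection with the convex set $\Delta(x,y,z)$ is a subinterval of $r$; since this intersection is bounded (by compactness of the triangle) yet contains $x$, it has the form $[x,w]$ for some $w \in \partial\Delta(x,y,z) = [x,y] \cup [y,z] \cup [x,z]$. Since $u \in [p,q] \subseteq \Delta(x,y,z)$ and $u$ lies on $r$, we have $u \in [x,w]$; moreover the subarc $[x,w]$ of $r$ is, by Lemma \ref{unique_geodesics}, the unique geodesic from $x$ to $w$.

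The proof is then completed by a case analysis on which boundary piece contains $w$. If $w \in [y,z]$ we set $v := w$, so $u \in [x,w] = [x,v]$, as required. If $w \in [x,y]$, then the geodesic $[x,w]$ coincides by uniqueness with the initial portion of $[x,y]$, forcing $u \in [x,y]$; choosing $v := y$ then yields $u \in [x,y] = [x,v]$ with $y \in [y,z]$. The case $w \in [x,z]$ is symmetric, with $v := z$.

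The main obstacle is providing clean justifications for the two structural claims underlying the exit argument: that $\Delta(x,y,z)$ is compact in a Busemann surface, and that a geodesic ray through an interior point of a compact convex set exits it at a well-defined boundary point. Both follow from general properties of Busemann surfaces (properness, convexity of $\Delta(x,y,z)$ via Lemma \ref{triangle}, and the fact that a convex subset of a geodesic is a subarc), but a little care is needed around degenerate cases where the triangle collapses or the exit point $w$ coincides with a vertex of the triangle; the trichotomy above already absorbs those through the choices $v \in \{w, y, z\}$.
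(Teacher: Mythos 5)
The paper itself gives no proof of this lemma: it is imported verbatim from \cite[Lemma 9]{ChChNa}, so there is no in-paper argument to measure yours against. Judged on its own terms, your exit-point argument is correct. The ray $r$ extending $[x,u]$ is convex (each of its subarcs is a geodesic, hence by Lemma~\ref{unique_geodesics} it contains $[a,b]$ for any two of its points $a,b$), $\Delta(x,y,z)=\conv\{x,y,z\}$ is convex by Lemma~\ref{triangle} and is closed and bounded by definition, so $r^{-1}(\Delta(x,y,z))$ is a closed bounded interval containing $0$; hence $r\cap\Delta(x,y,z)=[x,w]$ with $u\in[x,w]$, and since the ray continues past $w$, the point $w$ cannot lie in the open interior region and must lie on $[x,y]\cup[y,z]\cup[x,z]$. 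Your trichotomy then closes the argument: $v=w$ if $w\in[y,z]$, and if $w\in[x,y]$ then $[x,w]\subseteq[x,y]$ by uniqueness of geodesics, so $u\in[x,y]=[x,v]$ with $v=y\in[y,z]$ (symmetrically $v=z$). Two points deserve tightening. First, you should not invoke properness or compactness: the paper nowhere establishes that a Busemann surface is proper, and you do not need it --- closedness plus boundedness of $\Delta(x,y,z)$ already make $r^{-1}(\Delta(x,y,z))$ a closed bounded interval. Second, the degenerate case $u=x$ (which occurs when $p=q=x$) leaves nothing to extend, but any $v\in[y,z]$ works there; and the assertion that the exit point lies on $\partial\Delta(x,y,z)$ should be stated as the one-line openness argument above rather than left implicit.
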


The next lemma asserts that the rays of two tangent lines at a point
$x$ induce one or two additional lines in their support (for an
illustration, see Fig. 1 of \cite{ChChNa}):

\begin{lemma} \cite[Lemma 5]{ChChNa}\label{two-lines} Let $\ell$ and
  $\ell'$ be two intersecting geodesic lines such that $\ell'$ is
  contained in a closed halfplane $H$ defined by $\ell$. Let
  $x \in \ell \cap \ell'$, and let $r_1,\ldots r_4$ be the four rays
  emanating from $x$ with $\ell = r_1 \cup r_2$ and
  $\ell' = r_3 \cup r_4$ and $r_1,r_4,r_3,r_2$ appear in that order
  around $x$ on $H$. Then $r_1 \cup r_3$ and $r_2 \cup r_4$ are also
  geodesic lines.
\end{lemma}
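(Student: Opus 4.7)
The plan is to show that for every $p \in r_1 \setminus \{x\}$ and every $q \in r_3 \setminus \{x\}$, the unique geodesic $[p,q]$ passes through $x$. This yields $d(p,q) = d(p,x) + d(x,q)$ and hence makes the arc-length parametrisation of $r_1 \cup r_3$ (on both sides of $x$) an isometric copy of $\mathbb R$, i.e., a geodesic line; a symmetric argument, swapping the two sides of the cyclic order and using $r_3$ in place of $r_4$, then yields the same conclusion for $r_2 \cup r_4$.

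I would proceed by contradiction. Suppose $x \notin [p,q]$. First confine $[p,q]$ to $H$: since $H$ is convex (as a half-plane defined by a geodesic line in a Busemann surface) and $p \in \ell \subset H$, $q \in \ell' \subset H$, we have $[p,q] \subset H$. Next, $[p,q]$ cannot meet $r_4$: if some $z \in [p,q] \cap r_4$ were different from $x$, then $z, q \in \ell'$, so by Lemma~\ref{unique_geodesics} the sub-geodesic $[z,q]$ of $[p,q]$ coincides with the sub-segment of $\ell'$ from $z \in r_4$ to $q \in r_3$, which passes through the unique crossing $x = r_3 \cap r_4$ and contradicts $x \notin [p,q]$. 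Hence $[p,q] \subset H \setminus r_4$.

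The topological step produces the contradiction. Since $\mathcal{S}$ is homeomorphic to $\mathbb R^2$ and $\ell$ is a properly embedded topological line, $H$ is homeomorphic to a closed Euclidean half-plane with $\ell$ as boundary. The ray $r_4$ is a properly embedded topological arc in $H$ with endpoint $x \in \partial H$ and its other end at infinity in the interior, so by the Jordan arc theorem applied in the one-point compactification of $H$ (a closed disk, inside which $r_4$ becomes an arc joining two boundary points), $r_4$ separates $H$ into exactly two connected components. The cyclic order $r_1, r_4, r_3, r_2$ around $x$ in $H$, combined with $r_1, r_2 \subset \partial H$, forces $r_1 \setminus \{x\}$ and $r_3 \setminus \{x\}$ to lie in different components of $H \setminus r_4$; since $[p,q]$ is connected and contained in $H \setminus r_4$ while $p,q$ lie in opposite components, this is the desired contradiction, and hence $x \in [p,q]$.

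The main obstacle I foresee is this last topological separation step: one must check both that the ray $r_4$ genuinely separates $H$ (a Jordan-arc argument in the compactified half-plane) and that the cyclic order distributes $r_1$ and $r_3$ across the two sides of $r_4$. The latter reduces to a local observation at $x$, using a small transverse circle that meets the four rays in the prescribed cyclic order, combined with the fact that each of the open rays $r_1\setminus\{x\}$ and $r_3\setminus\{x\}$ is connected and disjoint from $r_4$, so must lie entirely in a single component of $H \setminus r_4$.
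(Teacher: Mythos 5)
Note first that the paper does not prove Lemma~\ref{two-lines} at all: it is imported verbatim as \cite[Lemma 5]{ChChNa}, so there is no in-paper proof to compare against. Judged on its own, your argument is correct and self-contained at the level of rigor the paper itself uses for topological facts about $\mathcal S\cong\mathbb{R}^2$. The reduction to ``$x\in[p,q]$ for all $p\in r_1\setminus\{x\}$, $q\in r_3\setminus\{x\}$'' is exactly what is needed (it gives $d(p,q)=d(p,x)+d(x,q)$, and within each ray the arc-length parametrisation is already isometric), the use of unique geodesics (Lemma~\ref{unique_geodesics}) to force $[p,q]\cap r_4=\varnothing$ under the assumption $x\notin[p,q]$ is clean, and the cross-cut argument in the one-point compactification of $H$ is sound: a geodesic ray leaves every bounded, hence every compact, set, so $r_4\cup\{\infty\}$ is an arc joining the two boundary points $x$ and $\infty$ of the disk $H\cup\{\infty\}$ and separates it, with the prescribed cyclic order at $x$ placing $r_1\setminus\{x\}$ and $r_3\setminus\{x\}$ on opposite sides. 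Two points you leave implicit and should at least acknowledge: (i) you assume the four rays pairwise meet only at $x$ (if, say, $r_4$ shared an initial segment with $r_1$, then $r_4\setminus\{x\}$ would touch $\partial H$ and the cross-cut statement would need care); this is essentially forced by the hypothesis that $r_1,r_4,r_3,r_2$ occur in a well-defined cyclic order around $x$, since $\ell\cap\ell'$ is convex by unique geodesics, but it deserves a sentence. (ii) The symmetric case for $r_2\cup r_4$ uses $r_3$ as the separating cross-cut, which works because the same cyclic order places $r_4\setminus\{x\}$ and $r_2\setminus\{x\}$ on opposite sides of $r_3$; your one-line remark to this effect is adequate. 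An alternative, slightly shorter route would be to verify only that $r_1\cup r_3$ is a local geodesic at $x$ and invoke Lemma~\ref{local} (local geodesics are geodesics); your global argument avoids that lemma at the cost of the compactification step.
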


Since a Busemann surface $\mathcal S$ is homeomorphic to the plane
${\mathbb R}^2$, the properties of ${\mathbb R}^2$ preserved by
homeomorphisms also hold in $\mathcal S$.  For example, any simple
closed curve $\gamma$ in $\mathcal S$ divides the surface $\mathcal S$
into an interior region ${\mathcal R}:={\mathcal R}(\gamma)$ bounded
by $\gamma$ and an exterior region. Moreover, ${\mathcal R}$ is a
contractible bounded subset of $\mathcal S$.  A {\it cut} of
${\mathcal R}$ with endpoints $x,y\in \gamma$ is a path $\mu:
[a,b]\rightarrow {\mathcal R}$ such that $\mu (a)=x, \mu(b)=y$, and
$\mu(c)\in {\mathcal R}$ for any $a\le c\le b$. Using the
homeomorphism between $\mathcal S$ and ${\mathbb R}^2$, one can see
that any cut $\mu$ of $\mathcal R$ divides $\mathcal R$ into two
contractible bounded regions. Analogously, if $x,u,y,v$ are four
points occurring in this order on $\gamma$,  $\mu'$ is a cut of
$\mathcal R$ with endpoints $x,y$, and $\mu''$ is a cut of $\mathcal R$
with endpoints $u,v$, then $\mu'$ and $\mu''$ cross and divide
$\mathcal R$ into four contractible regions.

Using this kind of arguments, one can derive the following basic
properties of Busemann surfaces:
\begin{itemize}
\item[(1)] If $\Delta(x,y,z)$ is a triangle and $t \in [y,z]$, then
$\Delta(x,y,z)$ is divided into two triangles $\Delta(x,y,t)$ and
$\Delta(x,z,t)$ (i.e., $\Delta(x,y,z) = \Delta(x,y,t) \cup \Delta(x,t,z)$ and
$\Delta(x,y,t) \cap \Delta(x,t,z) = [x,t]$);
\item[(2)] If $\Delta(x,y,z)$ is a triangle and $u\in [x,y], v\in
[x,z],$ and $w\in [y,z],$ then $\Delta(x,y,z)$ is divided into four
triangles $\Delta(x,u,v),\Delta(v,w,z),\Delta(u,w,y)$, and
$\Delta(u,v,w)$;
\item[(3)] If $\Delta(x,y,z)$ is a triangle and $u\in
\Delta(x,y,z)$, then $\Delta(x,y,z)$ is divided into three
triangles $\Delta(x,y,u),\Delta(y,z,u),$ and $\Delta(x,z,u)$;
\item[(4)] If $Q=\mbox{ conv}(x,y,z,u)$ is a convex quadrangle with
sides $[x,y],[y,z],[z,u],[u,x]$ and $p\in [x,y], s\in [y,z], q\in
[z,u], t\in [u,x]$, then the geodesic segments $[p,q]$ and $[s,t]$
divide $Q$ into four convex quadrangles.
\end{itemize}

We will denote by $\partial B_r(x)$ the {\it sphere} of center $x$ and
radius $r$;  $\partial B_r(x)$ can be viewed as the difference between
$B_r(x)$ and $B_r^{\circ}(x)$ or, equivalently, as the set $\{ y\in
{\mathcal S}: d(x,y)=r \}$. The following property is also a
consequence of the homeomorphism between $\mathcal S$ and ${\mathbb
R}^2$:

\begin{lemma} \label{sphere1}
Any sphere $\partial B_r(x)$ of $\mathcal S$ is homeomorphic to the
circle ${\mathbb S}^1$ of ${\mathbb R}^2$.
\end{lemma}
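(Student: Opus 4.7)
My plan is to construct, for a large topological circle $C \subset \mathcal{S}$ surrounding $B_r(x)$, a continuous bijection $\rho \colon C \to \partial B_r(x)$ by radial projection along geodesics from $x$, and then use compactness to promote $\rho$ to a homeomorphism.

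First, I would note that $B_r(x)$ is compact (by Hopf--Rinow, using the implicit completeness of the Busemann surface together with the local compactness coming from the $2$-manifold structure). Fix a homeomorphism $\phi \colon \mathcal{S} \to \mathbb{R}^2$ and pick $R > 0$ so large that the open Euclidean disk of radius $R$ about $\phi(x)$ contains $\phi(B_r(x))$; set $C := \phi^{-1}(\{w \in \mathbb{R}^2 : |w - \phi(x)| = R\})$. Then $C$ is a topological circle in $\mathcal{S}$ disjoint from $B_r(x)$, and transporting Jordan's theorem through $\phi$ places $B_r(x)$ in the bounded component of $\mathcal{S} \setminus C$.

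Define $\rho$ by letting $\rho(y)$ be the unique point on the geodesic segment $[x,y]$ at distance $r$ from $x$; well-definedness uses $d(x,y) > R > r$ for $y \in C$ together with Lemma~\ref{unique_geodesics}, while continuity follows from the continuous variation of geodesics with their endpoints in a Busemann space. For surjectivity, given $z \in \partial B_r(x)$ apply Lemma~\ref{extension} to extend $[x,z]$ to an infinite geodesic ray; this ray must leave the bounded region enclosed by $C$, so it crosses $C$ at some $y$, and then $\rho(y) = z$ by construction.

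The main obstacle is injectivity, which reduces to the uniqueness of the geodesic continuation of $[x,z]$ past $z$ in a $2$-dimensional Busemann surface. If $\rho(y_1) = \rho(y_2) = z$ with $y_1 \neq y_2$, then the two geodesic lines extending $[x,y_1]$ and $[x,y_2]$ (via Lemma~\ref{extension}) share the full subsegment $[x,z]$ by Lemma~\ref{unique_geodesics}, yet branch past $z$. I would rule this out by a planar argument at $z$: if genuine branching occurred, then by the convexity of closed halfplanes and the ray-rearrangement given by Lemma~\ref{two-lines} at $z$, one could assemble two distinct geodesic lines that share only a ray, contradicting Lemma~\ref{unique_geodesics} on a common subsegment of that ray. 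Once injective, $\rho$ is a continuous bijection between the compact Hausdorff space $C \cong \mathbb{S}^1$ and the Hausdorff space $\partial B_r(x)$, hence a homeomorphism, so $\partial B_r(x) \cong \mathbb{S}^1$.
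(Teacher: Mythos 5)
Your construction of $\rho$, its continuity, and the surjectivity argument are all sound, but the injectivity step fails, and the whole approach collapses with it. In a Busemann surface, geodesics emanating from $x$ \emph{can} branch: two distinct geodesics from $x$ may share an initial segment of length greater than $r$ and then diverge. The paper's own Remark (the star $S_n$ embedded in the Busemann surface ${\mathcal S}_n$) provides a counterexample to your no-branching claim: there $d(u_i,u_j)=4\delta=d(u_i,v)+d(v,u_j)$ for all $i\ne j$, so the segment $[u_1,v]$ admits many distinct geodesic extensions past $v$ (one into each ray $R_j$, $j\ne 1$, and more into the interiors of the halfplanes not adjacent to $R_1$). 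Taking $x=u_1$ and $r<2\delta=d(u_1,v)$, every $y\in C$ lying in a halfplane not adjacent to $R_1$ satisfies $v\in[u_1,y]$, so $\rho$ collapses an entire arc of $C$ onto the single point of $[u_1,v]$ at distance $r$ from $u_1$. Thus $\rho$ is genuinely non-injective, and the ``continuous bijection from a compact space onto a Hausdorff space'' conclusion is unavailable. The specific argument you offer to exclude branching also proves nothing: exhibiting ``two distinct geodesic lines that share only a ray'' contradicts neither Lemma~\ref{unique_geodesics}, which forbids two distinct geodesics with the \emph{same pair of endpoints} (two lines sharing a ray agree on every subsegment of that ray and differ only on segments with distinct endpoints, exactly as $R_1\cup R_2$ and $R_1\cup R_3$ do in the example), nor Lemma~\ref{two-lines}, which only \emph{produces} additional lines from tangent configurations and forbids nothing.

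A correct proof along your lines would have to accommodate the collapsing rather than exclude it: for instance, show that each fiber $\rho^{-1}(z)=C\cap \Sp_x(z)$ is a closed connected arc of $C$ (using closedness and convexity of cones, Lemma~\ref{cones}), so that $\partial B_r(x)$ is the quotient of $C\cong{\mathbb S}^1$ by an upper semicontinuous monotone decomposition into proper closed arcs, which is again ${\mathbb S}^1$; alternatively, one can argue directly that $\partial B_r(x)$ is the topological frontier of the compact convex set $B_r(x)$, which has nonempty interior in the topological plane $\mathcal S$. There is also a smaller slip worth flagging: the equality $|\phi(y)-\phi(x)|=R$ gives no control whatsoever on $d(x,y)$, since $\phi$ is merely a homeomorphism; what you actually need, and do have, is only $d(x,y)>r$, which follows from $C\cap B_r(x)=\varnothing$.
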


We continue with some new properties of Busemann surfaces.
Let $\pi(x,y,z)$ denote the perimeter of $\Delta(x,y,z)$, i.e.,
$\pi(x,y,z)=d(x,y)+d(y,z)+d(z,x)$. Then the following
monotonicity properties of triangles holds:

\begin{lemma} \label{perimeter}
  If $x',y',z'\in \Delta(x,y,z),$ then
  $\pi(x',y',z')\le \pi(x,y,z)$. Moreover, the equality holds only if
  either $\{ x',y',z'\}=\{ x,y,z\}$ or $\Delta(x,y,z)$ is degenerated,
  i.e., the points $x,y,z$ are collinear.
\end{lemma}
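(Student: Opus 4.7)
My plan is to prove Lemma~\ref{perimeter} by reducing to a finite case analysis on vertex triples via a maximum principle for geodesically convex functions on the triangle. The key preliminary fact is that for every $p \in \Delta(x,y,z)$ distinct from $x$, there exists $v \in \partial \Delta(x,y,z)$ with $p \in [x,v]$. To prove this, extend the geodesic $[x,p]$ to a geodesic ray starting at $x$ (using Lemma~\ref{extension}); since $\Delta(x,y,z)$ is a bounded Jordan region of the planar surface $\mathcal{S}$ (by items (1)--(3) and Lemma~\ref{triangle}), this ray exits $\Delta(x,y,z)$ at a first boundary point $v \in [x,y] \cup [y,z] \cup [z,x]$, and the uniqueness of geodesics (Lemma~\ref{unique_geodesics}) forces $p$ itself to lie on the edge $[x,y]$ or $[x,z]$ when $v$ does, leaving the main case $v \in [y,z]$.

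Next I would deduce the following maximum principle: if $f : \mathcal{S} \to \mathbb{R}$ is geodesically convex (i.e., $f \circ \gamma$ is convex for every constant-speed geodesic $\gamma$), then $\max_{p \in \Delta(x,y,z)} f(p) = \max\{f(x), f(y), f(z)\}$. Indeed, for $p \in \Delta(x,y,z)$ write $p \in [x,v]$ with $v \in [y,z]$; convexity of $f$ along $[x,v]$ gives $f(p) \le \max(f(x), f(v))$, and convexity along $[y,z]$ gives $f(v) \le \max(f(y), f(z))$. By the Busemann property applied with the constant geodesic at $q$, the function $a \mapsto d(a,q)$ is geodesically convex, hence so is any finite sum of such functions.

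Applying the maximum principle to $a \mapsto d(a,y') + d(a,z')$ and adding the constant $d(y',z')$ yields
\[
\pi(x',y',z') \le \max_{a \in \{x,y,z\}} \pi(a,y',z').
\]
Iterating the same bound in the second and third arguments gives $\pi(x',y',z') \le \max_{(a,b,c) \in \{x,y,z\}^3} \pi(a,b,c)$. For $(a,b,c)$ a permutation of $(x,y,z)$ this maximum equals $\pi(x,y,z)$; for $(a,b,c)$ with a repeated coordinate, say $a = b$, we have $\pi(a,a,c) = 2d(a,c) \le \pi(x,y,z)$ by the triangle inequality applied to $\Delta(x,y,z)$. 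Hence $\pi(x',y',z') \le \pi(x,y,z)$.

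The hardest part will be the equality characterization. Equality throughout forces the convex function $a \mapsto d(a,y') + d(a,z')$ to attain its maximum on $\Delta(x,y,z)$ at the (possibly non-vertex) point $x'$, which requires the function to be constant on a geodesic segment containing $x'$. Since $d(\cdot,q)$ is affine on a geodesic segment only when that segment lies on a geodesic through $q$, such constancy can occur only when the segment is contained in $[y',z']$, and analogous constraints apply for the iterations in the other two arguments. A careful case analysis combined with the triangle-inequality equality case in the vertex-triple computation then yields the stated dichotomy: either $\{x',y',z'\} = \{x,y,z\}$ or the three points $x,y,z$ are collinear.
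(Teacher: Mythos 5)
Your proof of the main inequality is correct but follows a genuinely different route from the paper. You observe that $a\mapsto d(a,q)$ is geodesically convex (take the second geodesic constant at $q$ in the definition of a Busemann space), derive a vertex maximum principle on $\Delta(x,y,z)$ from the fact that every $p\in\Delta(x,y,z)$ lies on a segment $[x,v]$ with $v\in\partial\Delta(x,y,z)$, and iterate in each argument of $\pi$ to reduce to vertex triples, where repeated triples are handled by the triangle inequality. The paper instead extends the chord $(x',y')$ to a chord $[x'',y'']$ of $\Delta(x,y,z)$ and then $(x'',z')$ to $[x'',z'']$, reducing everything to the case where all three points lie on $\partial\Delta(x,y,z)$, which is settled by summing three triangle inequalities. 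Your argument is more conceptual and would generalize beyond surfaces; the paper's chord reduction is more elementary and, crucially, sets up the equality analysis for free, since equality then forces equality in explicit triangle inequalities such as $d(x',y')=d(x',z)+d(z,y')$, which immediately yields betweenness relations among the nine points.

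The equality characterization in your proposal has a genuine gap, and it matters: the paper uses the strict inequality (e.g.\ in Case 4 of Claim~\ref{claim:ball-cover} and in Claim~\ref{claim:divide-regions}). Tracing equality through your chain correctly forces $g=d(\cdot,y')+d(\cdot,z')$ to be constant on a nondegenerate segment when $x'$ is not a vertex, hence both summands to be affine there. But your pivotal assertion --- that $d(\cdot,q)$ can be affine on a geodesic segment only when that segment lies on a geodesic through $q$ --- is stated without proof, is not among the paper's lemmas, and is not routine: Busemann convexity is a \emph{non-strict} inequality, so unlike the Euclidean or strictly convex normed case you cannot rule out an affine restriction of $d(\cdot,q)$ with slope in $(-1,1)$ by appealing to strict convexity; some additional argument (e.g.\ an equality-rigidity analysis in Proposition A, or a reduction to slopes $\pm1$ plus Lemma~\ref{local}) is required. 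Even granting that claim, the concluding ``careful case analysis'' that converts constancy of $g$ on a segment into the dichotomy ``$\{x',y',z'\}=\{x,y,z\}$ or $x,y,z$ collinear'' is only asserted (one must still handle the sub-cases where some of $x',y',z'$ are vertices, and convert degeneracy of $\Delta(x',y',z')$ into $2d(y',z')=\pi(x,y,z)$ and hence collinearity of $x,y,z$). As it stands, the equality half of the lemma is not proved.
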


\begin{proof}
  First assume that $\{ x',y',z'\} \subset \partial\Delta(x,y,z)$.
  Then the inequality $\pi(x',y',z')\le \pi(x,y,z)$ easily follows by
  applying the triangle inequality.

  Otherwise we may assume by symmetry that
  $x' \notin \partial\Delta(x,y,z)$. By Lemma~\ref{triangle}
  (convexity of triangles), $(x',y') \cap \Delta(x,y,z)$ is a segment
  $[x'',y'']$ with $x'', y'' \in \partial\Delta(x,y,z)$. Again by
  convexity of triangles, $(x'',z') \cap \Delta(x,y,z)$ is a segment
  $[x'',z'']$ with $x'', z'' \in \partial\Delta(x,y,z)$ and such that
  $z' \in [x'',z'']$. Since $x'', y'', z'' \in \partial\Delta(x,y,z)$,
  by the first case we have $\pi(x'',y'',z'') \leq \pi(x,y,z)$. By
  construction, $x', y' \in [x'',y'']$ and $z' \in [x'',z'']$, whence
  again by the first case we have
  $\pi(x',y',z') \le \pi(x'',y'',z'')$. Consequently,
  $\pi(x',y',z') \le \pi(x,y,z)$.

  The case of equality follows easily in the first case and from the
  fact that we reduced the general case to the first case.
\end{proof}

\begin{lemma} \label{perimeter2}
  If $u,v\in \Delta(x,y,z)$ and $d(x,y),d(y,z),d(z,x)\le \delta$, then
  $d(u,v)\le \delta$.
\end{lemma}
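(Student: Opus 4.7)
The plan is to apply convexity of balls (Lemma \ref{ball-convexity}) together with the identification of triangles with convex hulls (Lemma \ref{triangle}) in two successive stages. The core observation is that the hypothesis gives control over the three sides, which by convexity gives control over the whole filled triangle.

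First I would note that since $d(x,y) \le \delta$ and $d(x,z) \le \delta$, all three vertices $x,y,z$ lie in the closed ball $B_\delta(x)$. By Lemma \ref{ball-convexity} the ball $B_\delta(x)$ is convex, so it contains the convex hull of $\{x,y,z\}$; by Lemma \ref{triangle} this convex hull is exactly $\Delta(x,y,z)$, hence $\Delta(x,y,z) \subseteq B_\delta(x)$. Running the same argument with $y$ and with $z$ in place of $x$ gives $\Delta(x,y,z) \subseteq B_\delta(x) \cap B_\delta(y) \cap B_\delta(z)$. In particular $u$ is at distance at most $\delta$ from each of the three vertices, so $x,y,z \in B_\delta(u)$.

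Now I would apply the same reasoning a second time, but with the ball centered at the interior point $u$: since $x,y,z \in B_\delta(u)$ and $B_\delta(u)$ is convex, Lemma \ref{triangle} yields $\Delta(x,y,z) = \mathrm{conv}(x,y,z) \subseteq B_\delta(u)$. Since $v \in \Delta(x,y,z)$ by hypothesis, we conclude $d(u,v) \le \delta$, as required.

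There is no genuine obstacle here: the whole content is the double use of ball convexity, first to move from control of sides to control of the filled triangle, then to transfer that control to an arbitrary interior point. The only subtlety worth flagging is that Lemma \ref{triangle} is crucial for identifying $\Delta(x,y,z)$ with $\mathrm{conv}(\{x,y,z\})$ so that the convex set $B_\delta(\cdot)$ containing the three vertices automatically contains the entire two-dimensional region.
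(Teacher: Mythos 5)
Your proof is correct and is essentially identical to the paper's own argument: both use convexity of balls twice, first to get $\Delta(x,y,z)\subseteq B_\delta(x)\cap B_\delta(y)\cap B_\delta(z)$ and hence $x,y,z\in B_\delta(u)$, then to conclude $v\in\Delta(x,y,z)\subseteq B_\delta(u)$. Your explicit invocation of Lemma~\ref{triangle} just makes precise a step the paper leaves implicit.
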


\begin{proof}
  Since $x, y, z \in B_{\delta}(x)$ and the ball $B_{\delta}(x)$ is
  convex, $\Delta(x,y,z) \subseteq B_{\delta}(x)$. Hence
  $u \in B_{\delta}(x) \cap B_{\delta}(y) \cap B_{\delta}(z)$, or
  equivalently $x, y, z \in B_{\delta}(u)$. Again, since
  $B_{\delta}(u)$ is convex,
  $v \in \Delta(x,y,z)\subseteq B_{\delta}(u)$, whence
  $d(u,v)\le \delta$.
\end{proof}

We continue with the following quadrangle condition:

\begin{lemma} \label{quadrangle}
If $x,y,u,v$ are four points of $\mathcal S$ such that $[x,y]\cap
[u,v]\ne\varnothing$, then $\max\{ d(x,u)+d(y,v), d(x,v)+d(y,u)\}\le
d(x,y)+d(u,v)$. \end{lemma}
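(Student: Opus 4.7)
The plan is to exploit the intersection point directly together with the triangle inequality; no Busemann-specific machinery is actually needed beyond the existence of unique geodesics to make the notation $[x,y]$, $[u,v]$ meaningful. Let $w$ be any point in $[x,y]\cap[u,v]$. Because $w\in[x,y]$, the geodesic property gives $d(x,w)+d(w,y)=d(x,y)$, and similarly $w\in[u,v]$ yields $d(u,w)+d(w,v)=d(u,v)$.

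Next, I would apply the triangle inequality to the two pairings we wish to bound. For the first pairing,
\[
d(x,u)+d(y,v)\le\bigl(d(x,w)+d(w,u)\bigr)+\bigl(d(y,w)+d(w,v)\bigr),
\]
and regrouping the four summands as $[d(x,w)+d(w,y)]+[d(u,w)+d(w,v)]$ produces exactly $d(x,y)+d(u,v)$. For the second pairing, the same scheme with $u$ and $v$ swapped gives
\[
d(x,v)+d(y,u)\le\bigl(d(x,w)+d(w,v)\bigr)+\bigl(d(y,w)+d(w,u)\bigr)=d(x,y)+d(u,v).
\]
Taking the maximum of the two estimates yields the desired inequality.

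In short, the statement is essentially a reformulation of the classical ``four-point condition with a common midpoint'' and reduces to two applications of the triangle inequality combined with the defining property of a geodesic segment. I do not anticipate any real obstacle: unique geodesics (Lemma \ref{unique_geodesics}) only serve to make $[x,y]$ and $[u,v]$ well-defined objects, and convexity of balls or the surface structure plays no role here. The only care needed is to fix a single intersection point $w$ at the start so that both sums $d(x,w)+d(w,y)$ and $d(u,w)+d(w,v)$ can be collapsed to $d(x,y)$ and $d(u,v)$ simultaneously.
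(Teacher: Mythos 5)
Your proof is correct and follows essentially the same route as the paper: pick a point in $[x,y]\cap[u,v]$, apply the triangle inequality to each of the two terms, and regroup using the additivity of distance along a geodesic. No further comment is needed.
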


\begin{proof}
Let $z\in [x,y]\cap [u,v]$. By triangle inequality, $d(x,u)\le
d(x,z)+d(z,u)$ and $d(v,y)\le d(v,z)+d(z,y)$.  Hence,
$d(x,u)+d(v,y)\le d(x,z)+d(z,u)+d(v,z)+d(z,y)=d(x,y)+d(u,v)$.
Likewise, $d(x,v)+d(y,u)\le d(x,y)+d(u,v)$.
\end{proof}

The following lemma is a very particular case of a result of~\cite{Iv}
established for all $n$-dimensional uniquely geodesic spaces: 
\begin{lemma} \label{Helly} (Helly property)
 Any collection ${\mathcal C}=\{ C_i: i\in I\}$ of compact convex sets
of ${\mathcal S}$ has a nonempty intersection provided any three sets
of ${\mathcal C}$ have a nonempty intersection. In particular, any
collection of closed balls $\mathcal B$ of ${\mathcal S}$ has a
nonempty intersection provided any three balls of ${\mathcal B}$
intersect.
\end{lemma}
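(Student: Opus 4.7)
The plan is to prove this Helly property by induction on the cardinality of $\mathcal{C}$, after first reducing to the finite case. Each $C_i$ is compact, hence closed in the Hausdorff space $\mathcal{S}$, so the finite intersection property for compact families yields $\bigcap_{i \in I} C_i \neq \varnothing$ provided every finite subfamily has nonempty intersection. Thus I may assume $\mathcal{C} = \{C_1, \ldots, C_n\}$ is finite and induct on $n$; the base case $n = 3$ is the hypothesis.

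For the inductive step $n \geq 4$, applying the inductive hypothesis to each $(n-1)$-subfamily produces a point $p_i \in \bigcap_{j \neq i} C_j$ for every index $i$. The crucial step is then to establish a Radon-type dichotomy for the four points $p_1, p_2, p_3, p_4$: either (i) some $p_i$ lies in the triangle $\Delta(p_j, p_k, p_l)$ formed by the other three, or (ii) there is a partition $\{1,2,3,4\} = \{i,j\} \sqcup \{k,l\}$ such that $[p_i, p_j] \cap [p_k, p_l] \neq \varnothing$. Given this dichotomy, the Helly conclusion follows at once from convexity: in case (i), say $p_4 \in \Delta(p_1, p_2, p_3)$, and since $p_1, p_2, p_3 \in C_4$ and $\Delta(p_1, p_2, p_3) = \mathrm{conv}(p_1, p_2, p_3)$ by Lemma \ref{triangle}, convexity of $C_4$ forces $\Delta(p_1, p_2, p_3) \subseteq C_4$, whence $p_4 \in \bigcap_{m=1}^n C_m$. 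In case (ii), any $q \in [p_i, p_j] \cap [p_k, p_l]$ lies in $C_m$ for all $m \notin \{i,j\}$ by convexity of $C_m$ (since $p_i, p_j \in C_m$), and symmetrically for $m \notin \{k,l\}$; as these two index sets together cover $\{1,\ldots,n\}$, we get $q \in \bigcap_m C_m$.

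The main obstacle is proving the Radon-type dichotomy in a general Busemann surface. The plan is to assume (i) fails and to derive (ii) by a side-analysis of the four points with respect to three extended geodesic lines. For each pairing $\{i,j\}|\{k,l\}$, Lemma \ref{extension} extends $[p_i, p_j]$ to a geodesic line $\ell_{ij}$, which partitions $\mathcal{S}$ into two convex closed halfplanes. If, for some pairing, $p_k$ and $p_l$ lie on opposite halfplanes, then $[p_k, p_l]$ meets $\ell_{ij}$ at a point $q$; either $q \in [p_i, p_j]$ and (ii) holds, or, WLOG, $p_j \in [p_i, q]$, in which case $q \in [p_k, p_l] \subseteq \Delta(p_i, p_k, p_l)$ and $p_i$ is a vertex of that triangle, so $[p_i, q] \subseteq \Delta(p_i, p_k, p_l)$ by Lemma \ref{triangle}, forcing $p_j \in \Delta(p_i, p_k, p_l)$ and contradicting the failure of (i). The remaining, most delicate situation is when $p_k, p_l$ lie on the same halfplane of $\ell_{ij}$ for every one of the three pairings. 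Here I would use the lines $\ell_{12}, \ell_{13}, \ell_{23}$ bounding $\Delta(p_1, p_2, p_3)$: the same-side assumption would force $p_4$ into the halfplane containing the third vertex for each of these three lines, hence, using the characterization of $\Delta(p_1,p_2,p_3)$ as the intersection of these three halfplanes (which follows from Lemma \ref{triangle} together with topological arguments in $\mathcal{S} \cong \mathbb{R}^2$), into $\Delta(p_1, p_2, p_3)$ itself, again contradicting the failure of (i). The technical heart of the proof lies in the careful verification of this halfplane characterization in the presence of possibly tangent or partially coinciding line configurations, where Lemma \ref{two-lines} must be invoked, and in reducing all degenerate (collinear) configurations of the $p_i$ to case (i).
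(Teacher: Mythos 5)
The paper does not actually prove this lemma: it is imported from \cite{Iv} as a special case of a Helly theorem for $n$-dimensional uniquely geodesic spaces, so there is no in-text proof to compare against. Your overall architecture is the standard and correct one: the reduction to finite families via the finite intersection property of compact sets, the induction using points $p_i\in\bigcap_{j\ne i}C_j$, and the derivation of Helly from the Radon dichotomy (some $p_i$ lies in the triangle of the other three, or two disjoint pairs have crossing geodesics) are all sound, the last two steps needing only Lemma~\ref{triangle} and convexity, exactly as you use them. The first half of your proof of the dichotomy (the case where some chosen extension strictly separates the complementary pair) is also correct.

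The genuine gap is the claimed characterization of $\Delta(p_1,p_2,p_3)$ as the intersection of the three closed halfplanes determined by the \emph{chosen} extensions $\ell_{12},\ell_{13},\ell_{23}$. This is false in a general Busemann surface, because geodesic extensions through a point are not unique and a badly chosen extension of a side need not separate the exterior from the opposite vertex. Concretely, let $\mathcal S$ be a Euclidean cone of total angle $3\pi$ with apex $O$ (this is CAT(0), hence Busemann), put $p_2=O$, and place $p_1$ and $p_3$ at unit distance from $O$ at angular coordinates $0$ and $\pi/2$. Extend $[p_1,p_2]$ beyond $O$ by the ray at angle $2\pi$ and $[p_3,p_2]$ beyond $O$ by the ray at angle $3\pi/2$; both are legitimate geodesic lines since every angle formed at $O$ is at least $\pi$. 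A point $p_4$ at angle $7\pi/4$ sufficiently close to $O$ then lies in all three closed halfplanes containing the respective third vertices, yet $p_4\notin\Delta(p_1,p_2,p_3)$, which near $O$ occupies only the angular sector $[0,\pi/2]$. (The dichotomy itself is not violated here --- in this configuration $p_2=O$ lies on $[p_1,p_4]\cap[p_3,p_4]$ and hence in $\Delta(p_1,p_3,p_4)$ --- but your halfplane test fails to detect it, so no amount of ``careful verification'' will rescue the characterization as stated.) The repair is to replace the fixed extensions by Lemma~\ref{separa}: if $p_4\notin\Delta(p_1,p_2,p_3)$, then \emph{some} extension of \emph{some} side separates $p_4$ from the entire triangle, in particular from the opposite vertex, and your first sub-argument (the crossing point $q$ is either inside the side, giving case~(ii), or beyond an endpoint, giving case~(i) for that endpoint) applies verbatim to that extension. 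With this substitution, plus the routine treatment of points lying on the separating line and of collinear quadruples, your proof goes through.
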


For a compact set $S$ and a point $u\in S$, the {\it eccentricity} of
$u$ in $S$ is $e_S(u)=\max \{ d(u,v): v\in S\}.$
The {\it diameter} diam$(S)$ of $S$ is the maximum eccentricity of a
point $u$ of $S$, i.e., diam$(S)=\max\{ d(u,v): u,v\in S\}$.

\begin{lemma} \label{diameter}
For any compact set $S$ of $\mathcal S$, any point $u\in S$ has the
same eccentricity in the sets conv$(S)$ and $S$. Moreover, the sets
$S$ and conv$(S)$ have the same diameter.
\end{lemma}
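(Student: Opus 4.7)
The plan is to use convexity of balls (Lemma~\ref{ball-convexity}) applied once or twice, exploiting the fact that $\mathrm{conv}(S)$ is by definition the smallest convex set containing $S$.

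For the eccentricity claim, fix $u\in S$ and set $r:=e_S(u)$. The inequality $e_{\mathrm{conv}(S)}(u)\ge e_S(u)$ is immediate from $S\subseteq \mathrm{conv}(S)$. For the reverse inequality, observe that by definition of $e_S(u)$ one has $S\subseteq B_r(u)$. Since $B_r(u)$ is convex (Lemma~\ref{ball-convexity}) and $\mathrm{conv}(S)$ is the smallest convex set containing $S$, it follows that $\mathrm{conv}(S)\subseteq B_r(u)$. Hence $d(u,v)\le r$ for every $v\in \mathrm{conv}(S)$, which gives $e_{\mathrm{conv}(S)}(u)\le e_S(u)$.

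For the diameter claim, set $D:=\mathrm{diam}(S)$; clearly $\mathrm{diam}(\mathrm{conv}(S))\ge D$. To prove the reverse, first fix an arbitrary point $u\in S$. Then $S\subseteq B_D(u)$ by definition of $D$, so by convexity of $B_D(u)$ we get $\mathrm{conv}(S)\subseteq B_D(u)$. This means that for every $w\in \mathrm{conv}(S)$ and every $u\in S$, $d(u,w)\le D$, i.e., $S\subseteq B_D(w)$. Applying Lemma~\ref{ball-convexity} a second time, we conclude $\mathrm{conv}(S)\subseteq B_D(w)$, so $d(v,w)\le D$ for every $v,w\in \mathrm{conv}(S)$. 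This proves $\mathrm{diam}(\mathrm{conv}(S))\le D$.

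There is really no genuine obstacle here: the argument is the standard ``convex hull preserves enclosing balls'' trick and simply uses that balls are convex in any Busemann space. The only mildly delicate point is that in the diameter statement neither endpoint of a realizing pair is a priori in $S$, which is why the convexity argument must be iterated twice (first to bring one endpoint into $\mathrm{conv}(S)$, then the other).
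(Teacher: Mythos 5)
Your proof is correct. It takes a genuinely different, and in fact more economical, route than the paper. The paper builds $\mathrm{conv}(S)$ explicitly as the directed union of iterated geodesic joins $S_0=S\subseteq S_1\subseteq S_2\subseteq\ldots$ with $S_i=\bigcup_{x,y\in S_{i-1}}[x,y]$, and proves by induction on $i$ that the eccentricity and the diameter are preserved at every stage, each inductive step invoking convexity of balls twice (once to absorb $x\in[x',x'']$, once to absorb $y\in[y',y'']$). You instead use the definition of $\mathrm{conv}(S)$ as the \emph{smallest} convex set containing $S$ and apply Lemma~\ref{ball-convexity} directly: $S\subseteq B_r(u)$ forces $\mathrm{conv}(S)\subseteq B_r(u)$ by minimality, and for the diameter the same minimality argument is iterated once to move each endpoint of a realizing pair into $\mathrm{conv}(S)$ in turn. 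Both arguments rest on exactly the same geometric fact (balls are convex in a Busemann space); what the paper's version buys is the extra information that the bound holds at every finite stage of the join construction and independence from identifying that construction with the abstract smallest convex superset, while yours is shorter and isolates the one idea that matters. Your closing remark correctly identifies the only delicate point in the diameter statement, namely that the convexity argument must be applied twice because neither endpoint of a diametral pair of $\mathrm{conv}(S)$ need lie in $S$.
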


\begin{proof}
Let $r:=e_S(u)$ and $R:=\mbox{diam}(S)$.  The set conv$(S)$ can be
constructed as the directed union of the sets $S_0=S\subseteq
S_1\subseteq S_2\subseteq \ldots,$ where $S_{i}=\bigcup_{x,y\in
S_{i-1}} [x,y]$.  By induction on $i$ we will prove that
$e_{S_i}(u)=r$ and diam$(S_{i})=R$. This is obvious for $i=0$. Suppose now $i>0$.
Suppose this holds for all $j<i$ and pick any two points $x,y\in S_i$. By
the definition of $S_i$, there exist four (not necessarily distinct) points $x',x'',y',y''\in S_{i-1}$ such
that $x\in [x',x'']$ and $y\in [y',y'']$. Since diam$\{
x',x'',y',y''\}\le$ diam$(S_{i-1})=R$, we deduce that $x',x''\in B_{R}(y')\cap B_{R}(y'')$. By
convexity of balls, $x\in B_{R}(y')\cap B_{R}(y'')$, i.e.,
$d(x,y'),d(x,y'')\le R$. Hence $y',y''\in B_{R}(x)$. Consequently,
since $y\in [y',y'']$ and $B_R(x)$ is convex, $d(x,y)\le R$, i.e.,
diam$(S_i)=R$. Analogously, since $d(u,x'),d(u,x'')\le r$, the
convexity of the ball $B_{r}(u)$ implies that $d(u,x)\le r$, whence
$e_{S_i}(u)=r$.
\end{proof}

For a point $u$ and a geodesic segment $[x,y]$, the {\it shade} of
$[x,y]$ with respect to $u$ is the
set
\begin{displaymath}
\Sh_u(x,y):= \{ p \in {\mathcal S}:
     [u,p] \cap [x,y] \neq \varnothing
     \mbox{ and some line } (u,p) \mbox{ separates } x \mbox{ and } y\}.
\end{displaymath}
The second condition in the definition of $\Sh_u(x,y)$, about a line
separating $x$ from $y$ might seem irrelevant, but in a Busemann
surface, two lines may be tangent without crossing each other
(as in the conditions of Lemma \ref{two-lines}). In particular, if
$(u,p)$ is tangent to $[x,y]$, then $x$ and $y$ are not necessarily
separated by $(u,p)$.

The {\it shade} $\St_u(x,y,z)$ of a triangle $\Delta(x,y,z)$
with respect to a point $u \notin \Delta(x,y,z)$ is the union of the
shades of its three sides with respect to $u$:
\begin{displaymath}
\Sh_u(x,y,z):= \Sh_u(x,y)\cup \Sh_u(y,z)\cup \Sh_u(z,x).
\end{displaymath}

\begin{lemma}\label{lemma:triangle-shade}
  Every point  $p \in \St_u(x,y,z) \setminus \Delta(x,y,z)$ is contained
  in two of the three shades $\Sh_u(x,y)$, $\Sh_u(y,z)$, and
  $\Sh_u(x,z)$.
\end{lemma}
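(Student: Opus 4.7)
The plan is to show that the geodesic line used to witness one shade already witnesses a second shade, and then to verify that the geodesic segment $[u,p]$ (not merely the line containing it) crosses the corresponding second side.

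Assume without loss of generality that $p\in\Sh_u(x,y)$, witnessed by a geodesic line $\ell$ through $u,p$ with $x$ and $y$ in different closed halfplanes $H',H''$ bounded by $\ell$, and with a point $a\in[u,p]\cap[x,y]$. I would first locate $z$ relative to $\ell$: it lies in $H'\cup H''$, so at least one of the pairs $\{x,z\}$, $\{y,z\}$ is placed by $\ell$ into distinct closed halfplanes (both of them, if $z\in\ell$). By symmetry, assume that $\ell$ separates $y$ from $z$.

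Next I would establish $[u,p]\cap[y,z]\ne\varnothing$. Since $\mathcal S$ is homeomorphic to ${\mathbb R}^2$, the separation of $y$ from $z$ forces $\ell$ to cross $[y,z]$ at some point $b$. By convexity of the triangle (Lemma \ref{triangle}), $\ell\cap\Delta(x,y,z)$ is a sub-segment of $\ell$, whose two endpoints lie on $\partial\Delta(x,y,z)$ and must be $a$ and $b$ because $\ell$ does not separate $\{x,z\}$. Parameterize $\ell$ linearly so that these endpoints correspond to parameters $0$ and $L>0$; the condition $u,p\notin\Delta(x,y,z)$ puts their parameters outside $[0,L]$, while $a\in[u,p]$ with $a\ne u,p$ forces the parameters of $u$ and $p$ to lie strictly on opposite sides of $[0,L]$. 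Consequently $[u,p]$ covers $[0,L]$ in parameter space, so in particular $b\in[u,p]$. Combined with $\ell$ separating $y$ from $z$, this yields $p\in\Sh_u(y,z)$, and together with the starting assumption we obtain membership in two of the three shades.

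The main obstacle I expect is handling the degeneracies peculiar to Busemann surfaces: two distinct geodesic lines through $u,p$ may exist (Lemma \ref{two-lines}), and a geodesic line can be tangent to another geodesic without crossing it. The closed-halfplane definition of ``separates'' is tailored to absorb the case $z\in\ell$ as well as the cases where a crossing coincides with a vertex of $\Delta(x,y,z)$, but I would still want to verify the parameterization argument in the situation where $\ell$ shares a sub-segment with the line through one of the triangle's sides, so as to ensure that $\ell\cap\Delta(x,y,z)$ is indeed a single sub-segment whose two endpoints supply the required crossings with $[x,y]$ and $[y,z]$.
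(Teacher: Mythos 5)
Your proof is correct and follows essentially the same route as the paper's: extend $[u,p]$ to the witnessing separating line $\ell$, observe that $\ell$ must separate a second pair of vertices of the triangle, and use convexity of $\Delta(x,y,z)$ to place $u$ and $p$ on opposite sides of the interval $\ell\cap\Delta(x,y,z)$, so that the segment $[u,p]$ itself contains the crossing with the second side (the paper phrases this last step via the two rays of $\ell$ complementary to $[q,q']$ rather than via a linear parameterization, but it is the same argument). The only quibble is your incidental assertion that the endpoints of $\ell\cap\Delta(x,y,z)$ \emph{must be} $a$ and $b$ --- this can fail in tangency or collinear configurations, but it is not load-bearing, since your parameter argument only needs $a,b\in\ell\cap\Delta(x,y,z)$ together with $u,p\notin\Delta(x,y,z)$.
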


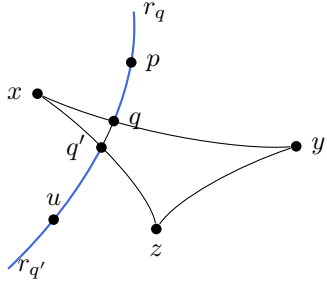
\begin{figure}
  \begin{center} \small
    \begin{tikzpicture}[x=0.6cm,y=0.6cm]
      \draw
        (0.300,0.733) .. controls (2.133,2.483) and (3.233,4.617) .. (3.083,6.433)
        node[pos=0,right] {$r_{q'}$}
        node[pos=1,right] {$r_q$}
        ;

      \draw[thick,draw=RoyalBlue]
        (0.300,0.733) .. controls (1.175,1.568) and (1.883,2.491) .. (2.367,3.424);
      \draw[thick,draw=RoyalBlue]
        (2.641,4.009) .. controls (2.986,4.839) and (3.147,5.664) .. (3.083,6.433);

      \node[stpoint,label=left:$x$] (x) at (0.950,4.617) {};
      \node[stpoint,label=right:$y$] (y) at (6.683,3.450) {};
      \node[stpoint,label=below:$z$] (z) at (3.583,1.617) {};
      \node[stpoint,label=above:$u$] (u) at (1.308,1.824) {};
      \node[stpoint,label=right:$p$] (p) at (3.027,5.311) {};
      \node[stpoint,label=right:$q$] (q) at (2.641,4.009) {};
      \node[stpoint,label=left:$q'$] (q') at (2.367,3.424) {};
      \draw (x) .. controls (2.783,3.783) and (5.383,3.350) .. (y);
      \draw (x) .. controls (2.317,3.667) and (3.467,2.333) .. (z);
      \draw (y) .. controls (5.383,3.017) and (4.033,2.283) .. (z);
    \end{tikzpicture}
  \end{center}
  \caption{Illustration for Lemma~\ref{lemma:triangle-shade}.}
  \label{fig:triangle-shade}
\end{figure}

\begin{proof}
  Let $\ell$ be a geodesic extension of $[u,p]$; since 
  $p \in \St_u(x,y,z)$ we may assume that $\ell$ separates $x$ and
  $y$.  By homeomorphism to $\mathbb{R}^2$, $\ell$ must also separate
  $x$ from $z$ or $z$ from $y$, say the first. Thus both $[x,y]$ and
  $[x,z]$ are intersected by $\ell$. Choose $q \in \ell \cap [x,y]$
  and $q' \in \ell \cap [x,z]$. Then $[q,q'] \subseteq \Delta(x,y,z)$, 
  by convexity of triangles.

  Since $u, p \notin \Delta(x,y,z)$, $u$ and $p$ are each contained
  in one of the rays $r_{q'}$ and $r_q$, where $r_q, r_{q'}$ are
  defined in such a way that $r_{q'} \cup [q',q] \cup r_q = \ell$ and the rays 
  $r_q$ and $r_{q'}$ are disjoint; see Figure \ref{fig:triangle-shade}. 
  If both $u$ and $p$ are contained in the 
  same ray, say $r_{q'}$, then as
  $[u,p] \cap \Delta(x,y,z) \neq \varnothing$ and
  $[q,q'] \subseteq \Delta(x,y,z)$, one of $u$ and $p$ would be in
  $\Delta(x,y,z)$ by convexity of triangles, and this would be a
  contradiction.  Hence $u$ and $p$ are in distinct rays. This implies
  that $q' \in [u,p]$, $[u,p]$ intersects $[x,z]$, whence
  $p \in \St_u(x,z)$.
\end{proof}

\begin{lemma} \label{shade} For any point $u$, any geodesic segment
  $[x,y]$ not containing $u$, and any triangle $\Delta(x,y,z)$ not
  containing $u$, the shades $\Sh_u(x,y)$ and $\St_u(x,y,z)$ are
  convex.
\end{lemma}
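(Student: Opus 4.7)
I will prove convexity of $\Sh_u(x,y)$ first, and then deduce convexity of $\St_u(x,y,z)$ from it, in both cases following a Pasch-based strategy.

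For $\Sh_u(x,y)$: given $p,q \in \Sh_u(x,y)$ with witnesses $p^* \in [u,p] \cap [x,y]$ and $q^* \in [u,q] \cap [x,y]$, and $r \in [p,q]$, the plan is to apply the Pasch axiom (Lemma \ref{Pasch}) to the triangle $\Delta(u,p,q)$ with the points $p^* \in [u,p]$, $q^* \in [u,q]$, and $r \in [p,q]$. This produces $r^* \in [p^*,q^*] \cap [u,r]$. Since $[x,y]$ is a geodesic and geodesics in a Busemann space are unique (Lemma \ref{unique_geodesics}), the subsegment $[p^*,q^*]$ lies in $[x,y]$, so $r^* \in [u,r] \cap [x,y]$, which is the first condition in the definition of the shade. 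For the separation clause, I extend $[u,r]$ to a line $\ell$ via Lemma \ref{extension}. The key observation is that both $p$ and $q$ lie in the closed halfplane of $(x,y)$ opposite to $u$ (by convexity of closed halfplanes), hence so does $r$; therefore $\ell$ has points on both closed halfplanes defined by $(x,y)$ in any neighborhood of $r^*$, which forces $\ell$ to cross $(x,y)$ transversally at $r^*$. When $r^*$ lies in the relative interior of $[x,y]$ this places $x$ and $y$ on opposite sides of $\ell$; when $r^*$ coincides with $x$ or $y$, the line $\ell$ passes through that vertex and separates $\{x\}$ from $\{y\}$ trivially. In either case $r \in \Sh_u(x,y)$.

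For $\St_u(x,y,z)$: I will first show $\Delta(x,y,z) \subseteq \St_u(x,y,z)$, by observing that for any $p \in \Delta(x,y,z)$, the segment $[u,p]$ must cross $\partial\Delta(x,y,z)$ (since $u \notin \Delta(x,y,z)$) on one of the three sides, and the separation argument above applies at the crossing point. Now, given $p,q \in \St_u(x,y,z)$ and $r \in [p,q]$, I split into cases. If $p,q \in \Delta(x,y,z)$, then $[p,q] \subseteq \Delta(x,y,z) \subseteq \St_u(x,y,z)$ by convexity of the triangle (Lemma \ref{triangle}). Otherwise, pick $p^* \in [u,p] \cap \partial\Delta(x,y,z)$ and $q^* \in [u,q] \cap \partial\Delta(x,y,z)$ (each exists, either because $p$ or $q$ lies in $\Delta$ forcing $[u,\cdot]$ to cross the boundary, or because the shade membership in $\St_u$ provides a witness on some side). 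Applying Pasch to $\Delta(u,p,q)$ yields $r^* \in [p^*,q^*] \cap [u,r]$, and by convexity of $\Delta(x,y,z)$ we have $[p^*,q^*] \subseteq \Delta(x,y,z)$, so $r^* \in [u,r] \cap \Delta(x,y,z)$. If $r \in \Delta(x,y,z)$, we conclude $r \in \St_u(x,y,z)$; otherwise $[u,r]$ enters $\Delta(x,y,z)$ through $r^*$ and exits at some point $r^{**} \in \partial\Delta(x,y,z)$ on a side $[a,b] \in \{[x,y],[y,z],[z,x]\}$, and the same transversality-plus-separation argument as before yields $r \in \Sh_u(a,b) \subseteq \St_u(x,y,z)$.

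The main obstacle I anticipate is making the separation clause precise in the tangency situations peculiar to Busemann surfaces (cf.\ Lemma \ref{two-lines}), where two geodesic lines may meet at a point without crossing. In the generic configuration, the fact that $\ell$ has points strictly on both sides of $(x,y)$ in a neighborhood of $r^*$ forces transversal crossing automatically, but when the natural extension of $[u,r]$ happens to be tangent to the relevant side, when $u$ lies on the line through that side, or when a crossing point lands at a vertex, one must appeal to Lemma \ref{two-lines} to produce an alternative line extension of $[u,r]$ achieving the required separation. Enumerating these degenerate configurations carefully is likely to constitute the bulk of the technical work.
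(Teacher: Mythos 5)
Your Pasch-based construction of the witness point $r^*\in[u,r]\cap[x,y]$ is exactly the paper's first step, but the separation clause --- which is the real content of the lemma on a Busemann surface --- is where your argument has genuine gaps. For $\Sh_u(x,y)$: the assertion that $p$ and $q$ (hence $r$) lie in ``the closed halfplane of $(x,y)$ opposite to $u$'' is neither well defined (there may be several lines extending $[x,y]$, and $u$ may lie on one of them) nor proved; establishing it would require ruling out exactly the tangency behaviour you defer to the end. Moreover, even granting it, $r$ may lie on $[x,y]$ itself (interior points of $[x,y]$ belong to the shade), in which case $\ell$ need not leave the closed halfplane near $r^*$ and no transversal crossing of $(x,y)$ is forced. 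The paper avoids all of this by arguing on the other pair of points: after disposing of the case where $\ell$ is tangent to $(p,q)$ at the intermediate point via Lemma~\ref{two-lines}, it observes that $\ell$ separates $p$ from $q$ but separates neither $x$ from $p$ nor $y$ from $q$ (witnessed by the curves $[x,p']\cup[p',p]$ and $[y,q']\cup[q',q]$), and since $\mathcal{S}\setminus\ell$ has exactly two components, $\ell$ must separate $x$ from $y$.

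More seriously, for $\St_u(x,y,z)$ your reduction ``$[u,r]$ exits $\Delta(x,y,z)$ through a side $[a,b]$, hence $r\in\Sh_u(a,b)$'' breaks down precisely in the degenerate configurations you list and postpone: if $[u,r]\cap\Delta(x,y,z)$ is a single point $e$ interior to $[a,b]$ and $[u,r]$ is tangent to $(a,b)$ there, touching the triangle from outside, then no extension of $[u,r]$ separates $a$ from $b$ --- and Lemma~\ref{two-lines} cannot rescue you, because any line assembled from one ray of $(u,r)$ at $e$ and one ray of $(a,b)$ contains only one of $u$ and $r$, so it is not a line $(u,r)$. One would have to show that this configuration cannot arise when $p,q\in\St_u(x,y,z)$, and that is the bulk of the proof rather than a routine enumeration. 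The paper's route is built around this obstacle: Lemma~\ref{lemma:triangle-shade} (each point of $\St_u(x,y,z)\setminus\Delta(x,y,z)$ lies in two of the three side-shades) settles the case $p,q\notin\Delta(x,y,z)$ by pigeonhole plus convexity of a single side-shade, and the mixed case $p\in\Delta(x,y,z)$, $q\notin\Delta(x,y,z)$ is handled by analysing the endpoint $m$ of $[p,q]\cap\Sh_u(x,y)$: either $m\in[x,y]$, and one switches to another side-shade containing both $p$ and $q$, or $m$ is a boundary point of $\Sh_u(x,y)$, in which case an openness argument forces the line $(u,m)$ to pass through $x$ or $y$, placing $m$ in a side-shade shared with $q$. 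Your plan, as written, does not contain the ideas needed to close these cases.
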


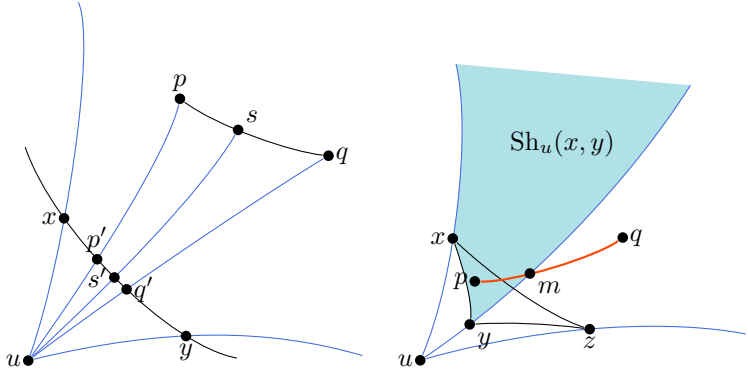
\begin{figure}
  \begin{center}
    \small
    \begin{tabular}{cc}
      \begin{tikzpicture}[x=0.5cm,y=0.5cm,label distance=-3pt]
        \draw (0.167,5.800) .. controls (0.933,3.550) and (4.017,0.500) .. (5.800,0.183);
        \node[stpoint,label=left:$u$] (u) at (0.250,0.117) {};
        \draw[draw=RoyalBlue] (u) .. controls (3.967,1.017) and (6.667,0.967) .. (9.133,0.250);
        \draw[draw=RoyalBlue] (u) .. controls (1.250,2.967) and (2.067,8.583) .. (1.583,9.650);
        \node[stpoint,label=right:$q$] (q) at (8.233,5.567) {};
        \node[stpoint,label=$p$] (p) at (4.283,7.083) {};
        \draw[draw=RoyalBlue] (u) .. controls (3.300,2.417) and (7.233,4.983) .. (q);
        \draw[draw=RoyalBlue] (u) .. controls (2.100,2.683) and (4.000,5.767) .. (p);
        \draw (p) .. controls (5.300,6.333) and (7.250,5.667) .. (q);
        \node[stpoint,label=above right:$s$] (s) at (5.830,6.254) {};
        \draw[draw=RoyalBlue] (u) .. controls (2.517,2.217) and (4.850,4.600) .. (s);
        \node[stpoint,label=left:$s'$] (s') at (2.540,2.328) {};
        \node[stpoint,label=right:$q'$] (q') at (2.865,2.012) {};
        \node[stpoint,label=above:$p'$] (p') at (2.082,2.813) {};
        \node[stpoint,label=below:$y$] (y) at (4.445,0.769) {};
        \node[stpoint,label=left:$x$] (x) at (1.202,3.902) {};
      \end{tikzpicture}
      &
      \begin{tikzpicture}[x=0.5cm,y=0.5cm,label distance=-3pt]
        \fill[fill=PowderBlue]
          (7.400,7.533) .. controls (6.040,5.394) and (3.958,3.062) .. (1.531,1.172)
          .. controls (1.617,1.733) and (1.350,2.717) ..
          (1.081,3.457) .. controls (1.367,5.234) and (1.417,7.060) .. (1.167,8.100);
        \node[stpoint,label=left:$u$] (u) at (0.217,0.217) {};
        \draw[draw=RoyalBlue] (u) .. controls (3.767,1.217) and (7.000,1.283) .. (8.867,0.917);
        \draw[draw=RoyalBlue] (u) .. controls (3.200,2.233) and (5.800,5.017) .. (7.400,7.533);
        \draw[draw=RoyalBlue] (u) .. controls (1.167,2.450) and (1.583,6.367) .. (1.167,8.100);
        \node[stpoint,label=right:$q$] (q) at (5.600,3.483) {};
        \node[stpoint,label=left:$p$] (p) at (1.667,2.317) {};
        \node[stpoint,label=below right:$y$] (y) at (1.531,1.172) {};
        \node[stpoint,label=below:$z$] (z) at (4.721,1.045) {};
        \node[stpoint,label=left:$x$] (x) at (1.081,3.457) {};
        \draw (y) .. controls (2.500,1.233) and (3.583,1.200) .. (z);
        \draw (x) .. controls (2.000,2.583) and (3.517,1.433) .. (z);
        \draw (x) .. controls (1.350,2.717) and (1.617,1.733) .. (y);
        \draw[draw=OrangeRed,thick] (p) .. controls (2.700,2.283) and (4.917,2.983) .. (q);
        \node[stpoint,label=below right:$m$] (m) at (3.132,2.526) {};
        \draw (4,6) node {$\Sh_u(x,y)$};
      \end{tikzpicture}
    \end{tabular}
  \end{center}
  \caption{Illustrations for the proof of Lemma~\ref{shade}.}
  \label{fig:shade-proof}
\end{figure}

\begin{proof}
  Let $p,q \in \Sh_u(x,y)$ and $s \in [p,q]$ (see
  Figure~\ref{fig:shade-proof}, left). We may assume
  $s \notin \{p,q\}$. Let $p' \in [x,y] \cap [u,p]$ and
  $q' \in [x,y] \cap [u,q]$. Suppose without loss of generality that
  $x,p',q',y$ occur in this order on $[x,y]$. By Pasch axiom there
  exists a point $s' \in [p',q'] \cap [u,s]$. Let $\ell$ be some line
  extending $[u,s]$.

  If $\ell$ is tangent to $(p,q)$ at $s$, by Lemma~\ref{two-lines},
  $s \in [u,p]$ or $s \in [u,q]$, and then there is a line $(x,p)$ or
  $(x,q)$ separating $y$ and $z$, and this line extends
  $[x,s]$. Otherwise, $\ell$ separates $p$ and $q$. But $\ell$ does
  not separate $x$ and $p$ (witnessed by the curve with support
  $[x,p'] \cup [p',p]$), and similarly does not separates $y$ and
  $q$. By homeomorphism to $\mathbb{R}^2$, $\ell$ separates
  $\mathcal{S}$ into exactly two connected components, hence $\ell$ separates
  $x$ and $y$. Thus $s \in \Sh_u(x,y)$, establishing the convexity of
  $\Sh_u(x,y)$.

  Now we will prove the convexity of $\St_u(x,y,z)$. If each of $p$ and $q$ is
  not contained in $\Delta(x,y,z)$, then by
  Lemma~\ref{lemma:triangle-shade}  both $p$ and $q$ belong to a common
  shade of the sides of $\Delta$. Since this shade is convex,
  $[p,q] \subset \St_u(x,y,z)$. If both $p$ and $q$ are in
  $\Delta(x,y,z)$, as $\Delta(x,y,z) \subset \St_u(x,y,z)$, the result
  follows by convexity of the triangle $\Delta(x,y,z)$ (Lemma \ref{triangle}).

  Finally, assume that $p \in \Delta(x,y,z)$ and
  $q \notin \Delta(x,y,z)$ (see Figure~\ref{fig:shade-proof},
  right). Let $p$ belong to the shade of $[x,y]$. By Lemma
  \ref{lemma:triangle-shade}, $q$ is in the shades of at least two
  sides.  If one of these sides is $[x,y]$, then we are done. So,
  suppose that $q\notin \Sh_u(x,y)$ and
  $q\in \Sh_u(y,z)\cap \Sh_u(z,x)$.  Let
  $[p,m]: = [p,q] \cap \Sh_u(x,y)$.  If $m \in [x,y]$, let $m'$ be a
  point of $\partial\Delta(x,y,z)$ such that $[m,m']$ is the
  intersection of $\Delta(x,y,z)$ with some line extending $[p,q]$. In
  particular, $m'$ is on a side distinct from $[x,y]$, say
  $m'\in [x,z]$.  Since $p \in [q,m']$ and $m',q\in \Sh_u(x,z)$,
  $p \in \Sh_u(x,z)$ by convexity of $\Sh_u(x,z)$, and we are done.

  Now suppose that the point $m$ is on the boundary of $\Sh_u(x,y)$
  and not on $[x,y]$. Since $m\in \Sh_u(x,y)$, there exists a line
  $\ell$ extending $[u,m]$ and separating $x$ from $y$. By the
  definition of Busemann spaces, if $\ell$ does not pass via $x$ or
  $y$, then for any point $m'$ in a small enough neighborhood of $m$,
  the geodesic $[u,m']$ also intersects $[x,y]$ and a line extending
  $[u,m']$ will separate $x$ from $y$, whence $m'\in \Sh_u(x,y)$. But
  this contradicts the choice of $m$ as the point such that
  $[p,m] = [p,q] \cap \Sh_u(x,y)$.  Indeed, the extension of $[p,m]$
  through $m$ in the direction of $q$ will contain points $m'$ of
  $\Sh_u(x,y)$.  Hence the line $\ell$ passes via $x$ or $y$, i.e.,
  $m$ is in $\Sp_u(x)$ or $\Sp_u(y)$. Hence $m$ belongs to the shade
  of $[x,z]$ or $[y,z]$.  Since $q\in \Sh_u(y,z)\cap \Sh_u(z,x)$, by
  convexity of that shade we conclude that
  $[m,q] \subset \St_u(x,y,z)$. Since by construction of $m$,
  $[p,m] \subset \Sh_u(x,y) \subset \St_u(x,y,z)$, we obtain that
  $[p,q] \subset \St_u(x,y,z)$, establishing the convexity of
  $\St_u(x,y,z)$.
\end{proof}

\begin{lemma} \label{separa}
  If $v \notin \Delta(x,y,z)$, then there exists a line $\ell$
  extending a side of $\Delta(x,y,z)$ and separating  $v$ and
  $\Delta(x,y,z)$.
\end{lemma}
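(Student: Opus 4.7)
The plan is to reduce the statement to the claim that, for suitably chosen geodesic extensions of the three sides, the three closed halfplanes each containing the opposite vertex intersect in exactly $\Delta(x,y,z)$. Once this identification is made, the hypothesis $v\notin\Delta$ forces $v$ to lie outside one of the halfplanes, and the bounding line of that halfplane provides the required separation. I would first dispose of the degenerate case separately (when $x,y,z$ are collinear, $\Delta$ lies on a line, and Lemma~\ref{extension} together with Lemma~\ref{two-lines} produces a line extension of a side that misses $v$).

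For the non-degenerate case, I would first use Lemma~\ref{extension} to extend each side $[a,b]$ to a geodesic line $\ell_{ab}$ with $\ell_{ab}\cap\Delta=[a,b]$. The existence of such an extension is guaranteed because if the extension past $b$ entered $\Delta$, then by a Jordan-curve argument in $\mathcal{S}\cong\mathbb{R}^2$ it would exit through a second side $[b,c]$ at some point $p$; then uniqueness of geodesics (Lemma~\ref{unique_geodesics}) combined with Lemma~\ref{local} would turn $[a,b]\cup[b,p]$ into the geodesic $[a,p]$, forcing $b\in[a,c]$ and degeneracy. Non-degeneracy also implies $c\notin\ell_{ab}$, so one closed halfplane $H_{ab}$ of $\ell_{ab}$ contains $c$; and since $\Delta=\mathrm{conv}(x,y,z)$ by Lemma~\ref{triangle} and halfplanes are convex, $\Delta\subseteq H_{xy}\cap H_{yz}\cap H_{xz}$.

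The crux is the reverse inclusion. I would argue by contradiction: supposing $w\in H_{xy}\cap H_{yz}\cap H_{xz}\setminus\Delta$, pick $c$ in the topological interior of $\Delta$ (non-empty because $\partial\Delta$ is a Jordan curve in $\mathcal{S}\cong\mathbb{R}^2$). The geodesic $[w,c]$ crosses $\partial\Delta$ at a first point $q$, which I may assume lies on $[x,y]\subseteq\ell_{xy}$. Since $\ell_{xy}\cap\Delta=[x,y]$, the interior of $\Delta$ is disjoint from $\ell_{xy}$ and, being connected, lies in the open halfplane $H_{xy}\setminus\ell_{xy}$, placing $c$ strictly on the $z$-side. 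If $w$ is also strictly on the $z$-side, then $[w,c]$ has both endpoints in the open halfplane but touches $\ell_{xy}$ at $q$, a configuration I would rule out by extending $[w,c]$ to a line and applying Lemma~\ref{two-lines} to show that the tangency at $q$ forces $[w,c]$ to cross $\ell_{xy}$ transversally, sending one endpoint to the wrong halfplane. If instead $w\in\ell_{xy}$, I would show $\ell_{xy}\cap H_{yz}\cap H_{xz}=[x,y]$: the rays of $\ell_{xy}$ past $x$ and past $y$ must leave $H_{xz}$ and $H_{yz}$ respectively, since by the geodesic-concatenation argument above the contrary would again produce degeneracy. In either sub-case $w\in\Delta$, a contradiction, establishing the identification and hence the lemma.

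The main obstacle I anticipate is the first sub-case of the crux step, namely ruling out that a Busemann geodesic between two points strictly on one side of a line can touch that line at an interior point. This hinges on a careful tangent analysis via Lemma~\ref{two-lines} and the uniqueness of geodesics, and may need the auxiliary observation that open halfplanes bounded by geodesic lines are themselves convex in Busemann surfaces. The opening extension step and the verification of $\ell_{xy}\cap H_{yz}\cap H_{xz}=[x,y]$ are, by contrast, fairly routine given the lemmas already available.
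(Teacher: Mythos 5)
Your argument hinges on the identity $\Delta(x,y,z)=H_{xy}\cap H_{yz}\cap H_{xz}$, and this identity is false in a general Busemann surface; the ``main obstacle'' you flag at the end is not a technical nuisance but a false statement. Consider the cone of total angle $3\pi$ at a point $v_0$ (three Euclidean half-planes glued cyclically along boundary rays, as in the star construction of the Remark in Section 2 with $n=3$); parametrize directions at $v_0$ by $[0,3\pi)$ and place $x,y,z$ at unit distance from $v_0$ in directions $0.4\pi$, $1.2\pi$, $2.3\pi$. Since the angular distance from $y$ to $z$ and from $x$ to $z$ exceeds $\pi$ both ways around $v_0$, the geodesics $[y,z]$ and $[x,z]$ both pass through $v_0$ and both contain $[v_0,z]$, so $\Delta(x,y,z)=\Delta(x,y,v_0)\cup[v_0,z]$ is a non-degenerate triangle. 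The extensions of $[y,z]$ and of $[x,z]$ beyond $z$ both continue along the ray from $v_0$ through $z$, so every point $w$ of that ray beyond $z$ lies on $\ell_{yz}$ and on $\ell_{xz}$, hence in $H_{yz}\cap H_{xz}$, and one checks it lies in $H_{xy}$ as well; yet $w\notin\Delta(x,y,z)$ because $\Delta(x,y,z)\subseteq B_1(v_0)$ by convexity of balls. This breaks both branches of your crux step: in your second sub-case, the ray of a side-extension beyond a vertex need not leave the other two halfplanes and produces no degeneracy, since it never re-enters $\Delta(x,y,z)$; and in your first sub-case, the principle you hope to establish --- that a geodesic joining two points strictly on one side of a line cannot touch that line --- is equally false (in the same cone, two points of one open sector at angular distance greater than $\pi$ at $v_0$ are joined by a geodesic through $v_0$, which meets any line through $v_0$ while both endpoints stay strictly on one side). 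Lemma~\ref{two-lines} describes such tangencies; it does not forbid them.

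What survives is the inclusion $\Delta(x,y,z)\subseteq H_{xy}\cap H_{yz}\cap H_{xz}$ and the remark that it would suffice to place $v$ outside one \emph{open} halfplane $H_{ab}\setminus\ell_{ab}$, since a point lying on $\ell_{ab}$ is already separated from $\Delta(x,y,z)$ in the weak sense used in the paper. But proving that the three open halfplanes intersect only inside the triangle still requires an argument immune to tangency, which your proposal does not supply. The paper's proof sidesteps the issue: it fixes an interior point $p$ of the triangle, draws from each vertex the ray opposite to $p$ on a line through that vertex and $p$, uses the homeomorphism with ${\mathbb R}^2$ to cut $\mathcal S$ into four regions, locates $v$ in the region bounded by $r_y\cup[y,z]\cup r_z$, and then uses Lemma~\ref{two-lines} to choose an extension of $[y,z]$ whose rays avoid the interior of that region. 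You would need to import an argument of this kind to close the gap.
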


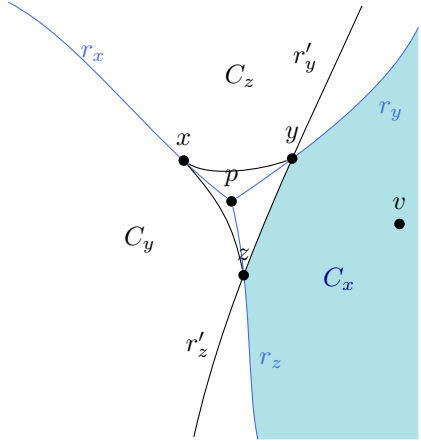
\begin{figure}
  \begin{center} \small
    \begin{tikzpicture}[x=1cm,y=1cm]
      \fill[PowderBlue]
        (7.150,7.050) .. controls (6.893,6.510) and (6.309,5.930) ..
        (5.473,5.303) .. controls (5.251,4.803) and (5.029,4.282) ..
        (4.828,3.751) .. controls (4.922,2.876) and (4.907,2.066) ..
        (5.017,1.567) -- (7.150,1.567);

      \draw
        (6.400,7.333) .. controls (5.683,5.733) and (4.617,3.650) .. (4.167,1.600)
        node[pos=0.2,above left] {$r'_y$}
        node[pos=0.8,left] {$r'_z$};

      \node[stpoint,label=$v$] (v) at (6.900,4.433) {};
      \node[stpoint,label=$p$] (p) at (4.667,4.733) {};
      \draw[RoyalBlue]
         (p) .. controls (4.950,3.517) and (4.867,2.250) .. (5.017,1.567)
         node[pos=0.6,right,text=RoyalBlue] {$r_z$};
      \draw[RoyalBlue]
        (p) .. controls (5.933,5.583) and (6.817,6.350) .. (7.150,7.050)
        node[pos=0.6,below right,text=RoyalBlue] {$r_y$};
      \draw[RoyalBlue]
        (p) .. controls (3.583,5.550) and (2.633,6.900) .. (1.700,7.383)
        node[pos=0.6,above,text=RoyalBlue] {$r_x$};

      \node[stpoint,label=$x$] (x) at (4.032,5.276) {};
      \node[stpoint,label=$z$] (z) at (4.828,3.751) {};
      \node[stpoint,label=$y$] (y) at (5.473,5.303) {};
      \draw (x) .. controls (4.517,4.717) and (4.683,4.433) .. (z);
      \draw (x) .. controls (4.383,5.050) and (5.067,5.150) .. (y);

      \draw (4.783,6.400) node {$C_z$};
      \draw (3.450,4.217) node {$C_y$};
      \draw[NavyBlue] (6.100,3.700) node {$C_x$};
    \end{tikzpicture}
  \end{center}
  \caption{Illustration for the proof of Lemma~\ref{separa}.}
  \label{fig:separa}
\end{figure}

\begin{proof}
  We may assume $x$, $y$ and $z$ are not aligned, otherwise any line $\ell$
  containing them would separate the triangle from any point.

  Let $p \in \Delta(x,y,z) \setminus \{x,y,z\}$. Let $r_x$ be a ray
  emanating from $x$ not going through $p$ on a line $(p,x)$. Define
  similarly $r_y$ and $r_z$. Those three rays are distinct because
  $x$, $y$ and $z$ are not aligned. Then by homeomorphism to
  $\mathbb{R}^2$, $r_x \cup r_y \cup r_z \cup \partial\Delta(x,y,z)$
  separates the surface $\mathcal{S}$ into 4 connected components, one
  of them being $\Delta(x,y,z)$ (see Figure~\ref{fig:separa}). We may
  assume that $v$ is in the closure $C_x$ of the component with
  boundary $\gamma := r_y \cup [y,z] \cup r_z$. Hence $\gamma$
  separates $v$ from $\Delta(x,y,z)$.

  Let $(y,z)$ be an extension of $[y,z]$, let $r'_y$ be the ray of
  $(y,z)$ from $y$ not containing $z$, and $r'_z$ be the ray of
  $(y,z)$ from $z$ not containing $y$, so that
  $(y,z) = r'_y \cup [y,z] \cup r'_z$. Then we may assume that $r'_y$
  does not intersect the interior of $C_x$. Indeed, otherwise $(y,z)$
  is tangent to $(p,y)$ on $y$, hence by Lemma~\ref{two-lines} we
  could choose $(y,z)$ such that $r_y = r'_y$. Similarly we may assume
  $r'_z$ does not intersect the interior of $C_x$. Hence the line
  $(y,z)$ separates $C_x$ from $\Delta(x,y,z)$.
\end{proof}

\subsection{Proof of Proposition~\ref{prop:three-balls}} \label{subsection:three-balls}

In this subsection we will prove the following Proposition \ref{prop:three-balls}:

\medskip\noindent
{\bf Proposition 2}. {\it Let $S$ be a compact subset of a Busemann surface $({\mathcal S},d)$
and suppose that the diameter of $S$ is at most $2{\delta}$. Then $S$
can be covered with 3 balls of radius ${\delta}$, i.e., $\rho(S)\le
3$.
}

\begin{proof} Let $S$ be a compact subset of $(\mathcal S,d)$ and suppose that the
diameter of $S$ is at most $2{\delta}$. Since by Lemma~\ref{diameter},
the diameter of conv$(S)$ coincides with the diameter of $S$ and
conv$(S)$ is compact, we will further assume without loss of
generality that $S$ is convex.  We will prove that $S$ can be covered with
three balls of radius ${\delta}$. Since diam$(S)\le
2{\delta}$, any two balls centered at points of $S$ intersect. If any
three such balls intersect, then Lemma~\ref{Helly} implies that
$\bigcap_{x\in S}B_{\delta}(x)\ne \varnothing$ and if $v$ is an
arbitrary point from this intersection, then $S\subseteq
B_{\delta}(v)$.  Therefore, further we can suppose that $S$ contains
triplets of points such that the ${\delta}$-balls centered at these
points have an empty intersection. We will call such triplets {\it
critical}.

Let $x,y,z\in S$ be an arbitrary triplet of points of $S$. Denote by
$x^*,y^*,$ and $z^*$ the midpoints of the geodesics $[y,z],[x,z],$ and
$[x,y]$, respectively. Since $d(x,y),d(y,z),d(z,x)\le 2{\delta}$, from
Proposition A we conclude that
$d(x^*,y^*),d(y^*,z^*),d(z^*,x^*)\le {\delta}$.  Let
$A_x:=\Delta(x,y,z)\cap B_\delta(y) \cap B_\delta(z),
A_y:=\Delta(x,y,z)\cap B_\delta(x) \cap B_\delta(z),$ and
$A_z:=\Delta(x,y,z)\cap B_\delta(x) \cap B_\delta(y)$.  These sets
are compact (as the intersection of compact sets) and nonempty
(because $x^*\in A_x, y^*\in A_y$, and $z^*\in A_z$). Among all
triplets of points, one from each of the sets $A_x,A_y,$ and $A_z$,
let $x',y',z'$ be a triplet with the minimum perimeter $\pi(x',y',z')$
of $\Delta(x',y',z')$.  Such a triplet exists because the sets
$A_x,A_y$, and $A_z$ are compact.  If the triplet $x,y,z$ is not
critical, then the points $x',y',z'$ coincide.  We will call
$\Delta(x',y',z')$ a {\it critical triangle} for the triplet
$x,y,z$.

The \emph{roadmap of the proof} is as follows: we prove that the three
$\delta$-balls centered at $x'$, $y'$, and $z'$ cover the whole set
$S$ (Claim 6). We proceed by contradiction and assume that there is an
uncovered point $v \in S$. The proof depends on the position of
$v$. The first part of the proof is to exhibit a suitable partition of
the set $S$. First, the triangle $\Delta(x,y,z)$ is subdivided into
seven smaller triangles (Claim 5, see Figure~\ref{fig:prop2-cases}
Case 1), and we show that each of them is covered. Thus $v$ must be
outside $\Delta(x,y,z)$. If one of the segments $[x,v]$, $[y,v]$, and
$[z,v]$ intersects the critical triangle $\Delta(x',y',z')$, then
again $v$ is covered (Figure~\ref{fig:prop2-cases} Cases 2 and
3). Finally, in the remaining cases (Figure~\ref{fig:prop2-cases} Case
4), we show that $v$ with two points among $x, y, z$ define a critical
triangle with a larger perimeter, contradicting the choice of
$\Delta(x,y,z)$. Claims 1--5 are about the geometry of $S$ with
respect to the defined points. Claim 6 examines the four possible
locations of $v$, illustrated in Figure~\ref{fig:prop2-cases}, and
discards each of them.

We continue with simple properties of critical triplets and
their critical triangles:

\begin{claim} \label{claim:critical-triangle}
If $x,y,z$ is a critical triplet of
$S$, then (a) the triangle $\Delta(x',y',z')$ is non-degenerated and
(b) $x'\in \partial B_{\delta}(y)\cap \partial B_{\delta}(z), y'\in
\partial B_{\delta}(z)\cap \partial B_{\delta}(x)$, and $z'\in
\partial B_{\delta}(x)\cap \partial B_{\delta}(y)$.
\end{claim}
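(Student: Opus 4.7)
The plan is to prove the three assertions separately, using the minimality of the perimeter $\pi(x', y', z')$ together with the convexity of balls (Lemma \ref{ball-convexity}) and triangles (Lemma \ref{triangle}) in Busemann surfaces. First I would observe that the sets $A_x, A_y, A_z$ are nonempty compact convex subsets of $\mathcal{S}$: each is an intersection of convex sets, is a closed subset of the compact triangle, and is nonempty because, by Proposition A, the midpoints $x^*, y^*, z^*$ of the sides of $\Delta(x,y,z)$ lie in $A_x, A_y, A_z$ respectively. Hence the minimum of $\pi$ over $A_x\times A_y\times A_z$ is attained by some $(x',y',z')$.

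The distinctness of $x', y', z'$ and part (a) follow directly from criticality. If two of the points coincide, say $x'=y'$, then this common point lies in $A_x \cap A_y \subseteq B_\delta(x)\cap B_\delta(y)\cap B_\delta(z)$, contradicting the criticality of $(x,y,z)$. Similarly, if $\Delta(x',y',z')$ is degenerate with, say, $y'\in[x',z']$, then $x',z'\in B_\delta(y)$ (from $A_x$ and $A_z$) forces $y'\in B_\delta(y)$ by convexity of balls, and combined with $y'\in A_y\subseteq B_\delta(x)\cap B_\delta(z)$ we again obtain a point in $B_\delta(x) \cap B_\delta(y) \cap B_\delta(z)$, contradicting criticality.

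For part (b), by symmetry it suffices to prove $d(x',y)=\delta$. Suppose instead $d(x',y)<\delta$. For sufficiently small $\epsilon>0$ let $x''$ be the point of $[x',y']$ at distance $\epsilon$ from $x'$. I would check that $x''\in A_x$: it lies in $\Delta(x,y,z)$ by convexity of the triangle; it lies in $B_\delta(y)$ since $d(x'',y)\le d(x',y)+\epsilon<\delta$ for $\epsilon$ small; and it lies in $B_\delta(z)$ since $x',y'\in B_\delta(z)$ (from $A_x$ and $A_y$) yields $[x',y']\subseteq B_\delta(z)$ by convexity of balls. Since $d(x'',y')=d(x',y')-\epsilon$ and $d(x'',z')\le\epsilon+d(x',z')$ by the triangle inequality, $\pi(x'',y',z')\le\pi(x',y',z')$; by minimality this must hold with equality, forcing $d(x'',z')=\epsilon+d(x',z')$, i.e., $x'\in[x'',z']$.

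The main obstacle is deriving a contradiction from $x'\in[x'',z']$ using (a). Here $[x'',z']=[x'',x']\cup[x',z']$ is a geodesic through $x'$, and since $[x'',x']\subseteq[y',x']$, one uses Lemma \ref{local} (local geodesics are geodesics) and the geodesic extension property (Lemma \ref{extension}) to obtain a geodesic line $\ell$ through $x'$ whose two rays from $x'$ extend $[x',y']$ and $[x',z']$. One then argues via uniqueness of geodesics (Lemma \ref{unique_geodesics}) that the full segments $[x',y']$ and $[x',z']$ lie on $\ell$, so that $y',z'\in\ell$ with $x'$ between them; by Lemma \ref{local} the sub-segment of $\ell$ from $y'$ to $z'$ is a geodesic through $x'$, and by uniqueness it equals $[y',z']$, whence $x'\in[y',z']$, contradicting (a). The delicate step is to confirm that $y'$ and $z'$ themselves (not merely short initial portions of the geodesics) lie on $\ell$; this uses the uniquely geodesic structure of Busemann surfaces to rule out geodesic branching from the common direction defined by the concatenated geodesic $[x'',z']$.
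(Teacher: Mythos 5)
Your part (a) is the paper's argument verbatim (convexity of balls applied to a degenerate configuration), and your verification that the perturbed point stays in the relevant set $A_x$ matches the paper's. For part (b) you take a genuinely different route. The paper perturbs the offending point (say $y'$ with $d(x,y')<\delta$) slightly along $[y',x']$ to a point $y''\in A_y$ and then invokes the \emph{equality clause} of Lemma~\ref{perimeter}: since $\Delta(x',y',z')$ is non-degenerate by (a) and $\{x',y'',z'\}\ne\{x',y',z'\}$, the perimeter strictly decreases, giving an immediate contradiction with minimality. You instead derive only the non-strict inequality $\pi(x'',y',z')\le\pi(x',y',z')$ from the triangle inequality and then analyse the equality case by hand, concluding $x'\in[x'',z']$ and from there $x'\in[y',z']$, which contradicts (a). Your route is more elementary in that it bypasses Lemma~\ref{perimeter} entirely, but it costs you an extra collinearity argument, and that is where your write-up has a genuine soft spot.

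The problematic step is your claim that uniqueness of geodesics (Lemma~\ref{unique_geodesics}) forces the full segments $[x',y']$ and $[x',z']$ to lie on a common line $\ell$ extending $[x'',z']$, ``ruling out geodesic branching.'' Unique geodesics do not rule out branching: ${\mathbb R}$-trees are uniquely geodesic Busemann spaces in which geodesics branch freely, and the paper itself emphasizes (around Lemma~\ref{two-lines} and in the discussion of shades) that in a Busemann surface two lines may share a segment and then diverge. So you cannot conclude that $y'$ lies on $\ell$. Fortunately you do not need to: the concatenation $[y',x']\cup[x',z']$ is a \emph{local} geodesic. Away from $x'$ it is locally a subsegment of one of the two geodesics; at $x'$, a small neighbourhood of it is contained in $[x'',x']\cup[x',z']$, and your equality $d(x'',z')=d(x'',x')+d(x',z')$ shows that this concatenation has length equal to $d(x'',z')$ and hence is (by Lemma~\ref{unique_geodesics}) the geodesic $[x'',z']$. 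By Lemma~\ref{local} the whole concatenation $[y',x']\cup[x',z']$ is then a geodesic, so $d(y',z')=d(y',x')+d(x',z')$ and $x'\in[y',z']$, the contradiction with (a) that you want. With that repair, replacing the appeal to lines and non-branching, your proof is complete.
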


\begin{proof} The assertion (a) follows from the convexity of balls: if
$\Delta(x',y',z')$ is degenerated and say $y'\in [x',z']$, since
$x',z'\in B_{\delta}(y)$, from the convexity of $B_{\delta}(y)$ we
conclude that $y'\in B_{\delta}(y)$, contrary to the assumption that
$x,y,z$ is critical.

To prove (b), suppose by way of contradiction that $y'\notin \partial
B_{\delta}(x)$, i.e., $d(x,y')<\delta$. Then there exists an
$\varepsilon>0$ such that $B^{\circ}_{\varepsilon}(y')\subset
B_{\delta}(x)$. On the other hand, the intersection
$B_{\varepsilon}^{\circ}(y')\cap \Delta(x',y',z')$ is different from $y'$.
Since $y',x'\in B_{\delta}(z)$, the
convexity of $B_{\delta}(z)$ implies that $[y',x']\subset
B_{\delta}(z)$. Therefore, we can find a point $y''\in [y',x']\cap
B_{\varepsilon}^{\circ}(y')$ different from $y'$. Then $y''\in
\Delta(x',y',z')\subseteq \Delta(x,y,z)$ and $y''$ still belongs
to the intersection $B_{\delta}(x)\cap B_{\delta}(z)$. Since
$\Delta(x',y',z')$ is non-degenerated, by Lemma~\ref{perimeter}, we
obtain $\pi(x',y'',z')<\pi(x',y',z'),$ contrary to the choice of the
points $x',y',z'$. This finishes the proof of Claim~\ref{claim:critical-triangle}.
\end{proof}

\medskip
Now, among all triplets of $S$ select a triplet $x,y,z$ for which the
perimeter of the critical triangle $\Delta(x',y',z')$ is as large as
possible. Notice that such a triplet necessarily exists since the
perimeter function $\pi: S\times S\times S\rightarrow {\mathbb R}^+$
is continuous because $S$ is convex and attain a maximum because $S$
is compact. Clearly, $x,y,z$ is a critical triplet of $S$.

\begin{figure}\label{figure1}
\begin{tikzpicture}[x=0.8cm,y=0.8cm]
\def \radius{4.2}
\def \radiusBall{\radius*0.92}
\node (c) at (0,0) {};
\draw (c) node [stpoint] (y) at +(-30:\radius) {};
\draw (c) node [stpoint] (x) at +(-30-120:\radius) {};
\draw (c) node [stpoint] (z) at +(-30-120-120:\radius) {};
\node [stpoint] (y*) at ($(x)!0.5!(z)$) {};
\node [stpoint] (x*) at ($(y)!0.5!(z)$) {};
\node [stpoint] (z*) at ($(x)!0.5!(y)$) {};
\draw (x) -- (y);
\draw (y) -- (z);
\draw (x) -- (z);
\draw (x*) -- (y*);
\draw (y*) -- (z*);
\draw (x*) -- (z*); 
\draw[name path = ball_x,densely dotted] (x) +(-30:\radiusBall) arc (-30:90:\radiusBall);
\draw[name path = ball_y,densely dotted] (y) +(90:\radiusBall) arc (90:210:\radiusBall);
\draw[name path = ball_z,densely dotted] (z) +(210:\radiusBall) arc (210:330:\radiusBall);
\draw[name intersections = {of = ball_x and ball_y}] (intersection-1) node (z') [stpoint] {};
\draw[name intersections = {of = ball_y and ball_z}] (intersection-1) node (x') [stpoint] {};
\draw[name intersections = {of = ball_x and ball_z}] (intersection-1) node (y') [stpoint] {};
\draw (x') -- (y');
\draw (y') -- (z');
\draw (x') -- (z');
\path (x') node[above] {$x'$}; 
\path (y') node[above] {$y'$};
\path (z') node[right] {$z'$};
\path (x) node[below left] {$x$};
\path (y) node[below right] {$y$};
\path (z) node[above] {$z$};
\path (y*) node[above left] {$y^*$};
\path (x*) node[above right] {$x^*$};
\path (z*) node[below] {$z^*$};
\end{tikzpicture}
 \caption{The choice of points $x',y',z'$ in Proposition~\ref{prop:three-balls}.}
\end{figure}
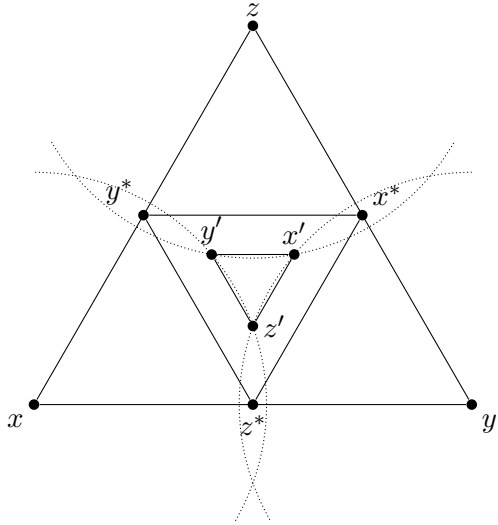

\begin{claim} \label{claim:containment-triangle} $\Delta(x',y',z')\subseteq \Delta(x^*,y^*,z^*)$. In particular,
$d(x',y'),d(y',z'),d(z',x')\le \delta$.
\end{claim}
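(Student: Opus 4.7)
The claim has two parts: the containment $\Delta(x',y',z') \subseteq \Delta(x^*,y^*,z^*)$, and the distance bound on sides of $\Delta(x',y',z')$. I would handle them in that order.

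For the distance bound, I first dispense with the midpoint triangle's geometry. Applying Proposition A(iv) at each vertex of $\Delta(x,y,z)$ to the two geodesics emanating from that vertex, with parameter $t=1/2$, gives $d(y^*,z^*) \le \tfrac12 d(y,z) \le \delta$, and symmetrically $d(x^*,y^*),\,d(x^*,z^*) \le \delta$. So every side of $\Delta(x^*,y^*,z^*)$ is at most $\delta$. Once the containment is established, applying Lemma \ref{perimeter2} to $\Delta(x^*,y^*,z^*)$ directly yields $d(x',y'),\,d(y',z'),\,d(z',x') \le \delta$, which is the ``in particular'' statement.

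For the containment, by Lemma \ref{triangle} (triangles are the convex hulls of their vertices), it suffices to show $x',\,y',\,z' \in \Delta(x^*,y^*,z^*)$. By symmetry, I focus on $x'$. Using decomposition property (2) listed after Lemma \ref{Peano} with the midpoints $u=z^* \in [x,y]$, $v=y^* \in [x,z]$, $w=x^* \in [y,z]$, the triangle $\Delta(x,y,z)$ is partitioned into four sub-triangles: the midpoint triangle $\Delta(x^*,y^*,z^*)$ and the three corner triangles $\Delta(x,y^*,z^*)$, $\Delta(y,z^*,x^*)$, $\Delta(z,x^*,y^*)$. I aim to rule out that $x'$ lies strictly in any corner triangle, by showing that in each such case, a competing point in $A_x$ would produce a triplet of strictly smaller perimeter, contradicting the choice of $(x',y',z')$.

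The starting observation is that $x^* \in A_x$ (because $d(y,x^*)=d(z,x^*)=\tfrac12 d(y,z) \le \delta$), and $x' \in A_x$. Since $A_x$ is the intersection of the convex sets $\Delta(x,y,z)$, $B_\delta(y)$, $B_\delta(z)$ (using Lemma \ref{ball-convexity}), it is itself convex, so $[x',x^*] \subseteq A_x$. If $x'$ were strictly inside a corner triangle, then by Pasch's axiom (Lemma \ref{Pasch}) applied inside $\Delta(x,y,z)$, the geodesic $[x',x^*]$ would cross an edge of the midpoint triangle at some point $q \in A_x \cap \Delta(x^*,y^*,z^*)$; in the corner near $x$ the crossing is at $[y^*,z^*]$, and in the corners near $y$ or $z$ at $[z^*,x^*]$ or $[y^*,x^*]$ respectively.

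The main obstacle is converting the existence of this interior witness $q$ into a strict perimeter decrease. By convexity of the distance functions $p \mapsto d(p,y')$ and $p \mapsto d(p,z')$ along the geodesic $[x',x^*]$ (the defining property of Busemann spaces), these distances are controlled by their endpoint values, but this alone gives only a non-strict inequality compatible with the minimality of $\pi(x',y',z')$ over $A_x \times A_y \times A_z$. I expect to combine this convexity with either (a) the boundary condition $d(x',y)=d(x',z)=\delta$ from Claim~\ref{claim:critical-triangle}, which forces $x'$ onto a one-dimensional locus and lets one compare $x'$ against a small perturbation toward $x^*$, or (b) the maximality of the critical-triangle perimeter over all triplets of $S$ (selected just before Claim 2 but not yet used) to build a competing triplet $(\tilde x,\tilde y,\tilde z)$ whose critical triangle strictly exceeds $\pi(x',y',z')$ in perimeter. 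This last step — strictly ruling out the corner-triangle positions near $y$ and $z$, where the Pasch-with-$[x',x^*]$ argument does not yield an immediate contradiction — is where I anticipate the bulk of the geometric work.
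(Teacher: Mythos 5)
There is a genuine gap, and you flag it yourself: your plan for the containment hinges on turning the Pasch crossing point $q$ into a \emph{strict} perimeter decrease, and you never produce one. Neither of your two proposed escape routes works as stated. Route (b) — invoking the maximality of the critical-triangle perimeter over all triplets of $S$ — is the wrong tool here: that maximality is a global selection used much later (in Case 4 of Claim~\ref{claim:ball-cover}) and gives no leverage on where $x'$ sits inside the fixed triangle $\Delta(x,y,z)$. Route (a) gestures at the right ingredient (the boundary conditions from Claim~\ref{claim:critical-triangle}) but packages it as a perturbation/minimality argument, which is exactly the part you cannot close. The whole perimeter-minimality framing is unnecessary for this claim.

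The missing idea is a direct ball-membership observation. Each corner triangle, minus its inner edge, sits inside the corresponding \emph{open} ball: since $d(x,y^*),d(x,z^*)\le\delta$, convexity of $B_{\delta}(x)$ gives $[y^*,z^*]\subseteq B_{\delta}(x)$, and any $w\in\Delta(x,y^*,z^*)\setminus[y^*,z^*]$ lies on $[x,w']$ for some $w'\in[y^*,z^*]$ with $w\ne w'$, whence $d(x,w)<d(x,w')\le\delta$, i.e.\ $\Delta(x,y^*,z^*)\setminus[y^*,z^*]\subseteq B^{\circ}_{\delta}(x)$. But by Claim~\ref{claim:critical-triangle}(b) and criticality, every one of $x',y',z'$ is at distance exactly $\delta$ from two of $x,y,z$ and strictly more than $\delta$ from the third — in particular none of them is at distance $<\delta$ from $x$ (nor, symmetrically, from $y$ or $z$). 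So none of $x',y',z'$ can lie in any corner region, forcing all three into $\Delta(x^*,y^*,z^*)$, which is the paper's argument. Your treatment of the ``in particular'' part (sides of the midpoint triangle are $\le\delta$ by Proposition A, then Lemma~\ref{perimeter2}) is correct and matches the paper, but without the containment being established the claim is not proved.
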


\begin{proof}
Since $\Delta(x^*,y^*,z^*)$ is convex, it suffices to show that
$x',y',z'\in \Delta(x^*,y^*,z^*)$. By their definition, the points
$x',y',z'$ belong to $\Delta(x,y,z)$. The triangle $\Delta(x,y,z)$
is the union of four triangles
$\Delta(x,y^*,z^*),\Delta(x^*,y,z^*),\Delta(x^*,y^*,z),$ and
$\Delta(x^*,y^*,z^*)$.  Suppose by way of contradiction that one of
the points $x',y',z'$ is located in $\Delta(x,y^*,z^*)\setminus
[y^*,z^*]$. Since $d(x,y^*),d(x,z^*)\le \delta$, by the convexity of
$B_{\delta}(x)$, $d(x,v)\le \delta$ for any point $v\in
[y^*,z^*]$. Now, if a point $w$ belongs to
$\Delta(x,y^*,z^*)\setminus [y^*,z^*]$, then extending the geodesic
$[x,w]$ through $w$ we will find a point $w'\in [y^*,z^*]$ such that
$w\in [x,w']$. Since $d(x,w')\le \delta$, we conclude that
$d(x,w)<\delta$. Consequently, neither of the points $x',y',z'$ can
belong to $\Delta(x,y^*,z^*)\setminus [y^*,z^*]$ (because each of
them belongs to two spheres and does not belong to the third
ball). Analogously, one can prove that $x',y',z'$ do not belong to
$\Delta(x^*,y,z^*)\setminus [x^*,z^*]$ and to
$\Delta(x^*,y^*,z)\setminus [x^*,y^*]$. Consequently, $x',y',z'\in
\Delta(x^*,y^*,z^*).$ The second assertion follows from Lemma
~\ref{perimeter2}.  This establishes Claim~\ref{claim:containment-triangle}.
\end{proof}

\medskip
We continue with a monotonicity property of the shade $\St_x(y',z')$.
Let $s(y') \in [y,z] \cap
[x,y')$ and $s(z')\in [y,z]\cap [x,z')$, where $[x,y')$ and $[x,z')$
are two rays with origin $x$ passing through $y'$ and $z'$,
respectively.  We will call $s(y')$ and $s(z')$ the {\it shadows} of
$y'$ and $z'$ in $[y,z]$ (or in any line $(y,z)$ extending $[y,z]$).
Analogously, one can define the shadow $s(p)$ in $[y,z]$
of any point $p\in [y',z']$ or of any point $p\in \Delta(x,y,z)$.

\begin{claim}\label{claim:order-shadows}
For any choice of the shadows $s(y')$ and $s(z')$ of $y'$ and $z'$ in
$[y,z],$ the points $y,s(z'),s(y'),z$ occur in this order on $[y,z]$.
\end{claim}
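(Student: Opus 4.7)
The plan is to argue by contradiction. I would first rule out $s(y')=s(z')$: both points lie at distance $\delta$ from $x$ by Claim~\ref{claim:critical-triangle}, so if the rays $[x,y')$ and $[x,z')$ coincided, uniqueness of geodesics (Lemma~\ref{unique_geodesics}) would force $y'=z'$, contradicting the non-degeneracy part of Claim~\ref{claim:critical-triangle}. It then suffices to derive a contradiction from the reversed order $y,s(y'),s(z'),z$.

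Under this assumption, the segment $[x,s(z')]$ subdivides $\Delta(x,y,z)$ into the two sub-triangles $\Delta(x,y,s(z'))$ and $\Delta(x,s(z'),z)$. Since $s(y')\in[y,s(z')]$, convexity of triangles (Lemma~\ref{triangle}) gives $[x,s(y')]\subseteq \Delta(x,y,s(z'))$, so $y'\in\Delta(x,y,s(z'))$, while $z\in\Delta(x,s(z'),z)$. The geodesic $[y',z]$ lies in $\Delta(x,y,z)$ by convexity and thus meets the common side $[x,s(z')]$ at some point $w$; in particular, $w$ belongs either to $[x,z']$ or to $[z',s(z')]$.

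Now the quadrangle condition (Lemma~\ref{quadrangle}) closes the argument. In the first subcase, applying it to $[y',z]$ and $[x,z']$ (which meet at $w$) yields $d(y',x)+d(z,z')\le d(y',z)+d(x,z')=2\delta$, and since $d(y',x)=\delta$ by Claim~\ref{claim:critical-triangle} we get $d(z,z')\le\delta$. In the second subcase, applying it to $[y',z]$ and $[z',s(z')]$ yields $d(y',s(z'))+d(z,z')\le d(y',z)+d(z',s(z'))=\delta+d(z',s(z'))$; since $z'\in[x,s(z')]$ we have $d(x,s(z'))=\delta+d(z',s(z'))$, and the triangle inequality then gives $d(y',s(z'))\ge d(x,s(z'))-d(x,y')=d(z',s(z'))$, whence again $d(z,z')\le\delta$. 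In both cases $z'\in B_\delta(x)\cap B_\delta(y)\cap B_\delta(z)$, contradicting the criticality of $\{x,y,z\}$.

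The main obstacle I anticipate is the placement of $y'$ and of the crossing point $w$ carried out in the second paragraph. Working entirely inside $\Delta(x,y,z)$ via the sub-triangle decomposition, instead of with a global line extension of $[x,s(z')]$, sidesteps the tangency phenomena permitted by Lemma~\ref{two-lines} in Busemann surfaces and keeps the argument purely convex-geometric; once this is in place, the distance contradiction is a short manipulation of the quadrangle inequality.
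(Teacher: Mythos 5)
Your proof is correct. It shares the paper's opening move---assume the reversed order $y,s(y'),s(z'),z$, place $y'$ inside $\Delta(x,y,s(z'))$ via $y'\in[x,s(y')]$, and use the fact that $[y',z]$ must meet $[x,s(z')]$---but from there the two arguments diverge. The paper splits according to whether $y'$ lies in $\Delta(x,y,z')$ or in $\Delta(z',y,s(z'))$ and in each case invokes the monotonicity of perimeters (Lemma~\ref{perimeter}) applied to triangles with a common base, obtaining $d(x,y')+d(y',y)\le d(x,z')+d(z',y)=2\delta$ (resp.\ the symmetric inequality over the base $[x,z]$), which contradicts $d(y,y')>\delta$. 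You instead split on whether the crossing point $w$ lies in $[x,z']$ or in $[z',s(z')]$ and in both cases apply only the quadrangle inequality (Lemma~\ref{quadrangle}), concluding $d(z,z')\le\delta$ and contradicting criticality of the triplet. Your route is more uniform (one case distinction, one lemma, one contradiction) and in a sense more elementary: Lemma~\ref{quadrangle} is a pure triangle-inequality statement valid in any geodesic space, whereas Lemma~\ref{perimeter} relies on the convexity and planar structure of Busemann surfaces. The paper's version, conversely, never needs to name the crossing point and reads the contradiction off directly from triangle containment. One small remark: your preliminary step ruling out $s(y')=s(z')$ is harmless but unnecessary, since if the two shadows coincide the claimed order holds trivially.
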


\begin{proof} Suppose by way of contradiction that $y, s(y'), s(z'), z$ occur in
this order on $[y,z]$. Then $y' \in [x,s(y')] \subset
\Delta(x,y,s(z'))$. If $y' \in \Delta(x,y,z')$, then by
Lemma~\ref{perimeter} (perimeters of triangles with basis $[x,y]$), we
have
$$2 \delta < d(x,y') + d(y',y) \leq d(x,z') + d(z',y) = 2 \delta,$$ a
contradiction. On the other hand, if $y' \in \Delta(z',y,s(z'))$,
then $[y',z]$ intersects $[x,s(z')]$ and $[z',s(z')]$. Consequently,
$z' \in \Delta(x,y',z)$ and this case is symmetric to the first
case. Since $\Delta(x,y,z')$ and $\Delta(z',y,s(z'))$
cover $\Delta(x,y,s(z'))$, this finishes the proof of
Claim~\ref{claim:order-shadows}.
\end{proof}

\begin{claim}\label{claim:shadow-convexity}
  If $p,q \in \Delta(x,y,z), v \in \St_x(p,q),$ and
  $v' \in [y,z] \cap (x,v)$, where $(x,v)$ is a line passing via $x$
  and $v$ and separating $p$ and $q$, then there exist shadows $s(p)$
  and $s(q)$ of $p$ and $q$ in $[y,z]$ such that $v' \in
  [s(p),s(q)]$.
\end{claim}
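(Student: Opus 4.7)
The plan is to use the separating line $\ell = (x,v)$ to partition $\Delta(x,y,z)$ so that $p$ and $q$ end up in different sub-triangles, and then to choose geodesic extensions $r_p, r_q$ whose first exits from these sub-triangles land on $[y,z]$ on opposite sides of $v'$.

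First, I would observe that the segment $[x,v'] \subseteq \ell$ is contained in $\Delta(x,y,z)$, so by the triangle subdivision property (property (1) in the list above) it partitions $\Delta(x,y,z)$ into the two sub-triangles $\Delta(x,y,v')$ and $\Delta(x,v',z)$. Because $\ell$ separates $p$ from $q$ and $[x,v'] \subseteq \ell$, we may assume without loss of generality that $p \in \Delta(x,y,v')$ and $q \in \Delta(x,v',z)$.

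Using the geodesic extension property (Lemma~\ref{extension}), extend $[x,p]$ to a ray $r_p$ from $x$ through $p$; I claim that this extension can be chosen so that the first point $w$ at which $r_p$ leaves the compact sub-triangle $\Delta(x,y,v')$ lies on the edge $[y,v']$. The point $w$ must lie on one of the three edges $[x,y]$, $[y,v']$, $[x,v']$. If $w \in [y,v']$, set $s(p) := w$ and we are done. Otherwise, $w \in ([x,y] \cup [x,v']) \setminus \{x\}$, and by unique geodesics (Lemma~\ref{unique_geodesics}), the sub-segment of $r_p$ from $x$ to $w$ must coincide with the corresponding sub-segment of $[x,y]$ or of $[x,v']$; since $p \in \Delta(x,y,v')$ forbids $p$ from lying on $r_p$ beyond $w$, the point $p$ lies on this initial sub-segment and hence belongs to $[x,y]$ or to $[x,v']$. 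In the former case we re-choose the extension of $[x,p]$ to run along $[x,y]$, whose first exit from $\Delta(x,y,v')$ is at $y$, and set $s(p) := y$; in the latter case we re-choose the extension along $[x,v']$ and set $s(p) := v'$. In every case $s(p) \in [y,v']$, and a symmetric argument for $q$ yields $s(q) \in [v',z]$.

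Since the five points $y, s(p), v', s(q), z$ now appear in this order along $[y,z]$, we conclude that $v' \in [s(p),s(q)]$, establishing the claim. The main obstacle is the possible non-uniqueness of geodesic extensions in a Busemann surface: the ray $r_p$ might initially run along the separating line $\ell$ (or along the edge $[x,y]$) and then branch into the interior of $\Delta(x,y,v')$, so one cannot simply argue that $r_p$ stays in the ``correct'' open halfplane. The plan sidesteps this by using uniqueness of geodesic segments to pin down the position of $p$ in each degenerate case, and then by explicitly re-selecting an extension of $[x,p]$ along the relevant edge to produce a valid shadow.
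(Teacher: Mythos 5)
Your proof is correct in substance but follows a genuinely different route from the paper's. The paper starts from \emph{arbitrary} shadows $s(p),s(q)$ and corrects them: if $v'\notin[s(p),s(q)]$, then $x$, $s(p)$, $s(q)$ all lie in one closed halfplane of the separating line, so the triangle $\Delta(x,s(p),s(q))$ (which contains $p$ and $q$) does too; since the line separates $p$ from $q$, one of them must lie on the line itself, hence on $[x,v']$, and then $v'$ can be re-selected as a shadow of that point. You instead subdivide $\Delta(x,y,z)$ by the chord $[x,v']$ and build the shadows constructively as the exit points of rays from $x$ through $p$ and $q$ out of the two sub-triangles; your explicit treatment of the case where the ray initially runs along an edge (forcing $p\in[x,y]$ or $p\in[x,v']$ by uniqueness of geodesics, then re-choosing the extension along that edge) is a nice touch that the paper's argument does not need but that your construction genuinely requires. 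The one step you assert without justification is the WLOG that $p\in\Delta(x,y,v')$ and $q\in\Delta(x,v',z)$: this needs the observation that $\ell\cap\Delta(x,y,z)$ is a boundary arc of each sub-triangle (so each sub-triangle minus $\ell$ is connected and lies in a single open halfplane), that the two sub-triangles contain points on opposite sides of $\ell$ near an interior point of $[x,v']$ (so they sit in \emph{different} closed halfplanes), and a word on the residual case where $p$ or $q$ lies on $\ell$ itself --- there the point lies on $[x,v']$, so $v'$ is directly a shadow of it and the conclusion is immediate. These are the same tangency/degeneracy subtleties the paper's own proof also glosses over, so I would not count them as a gap; net, your argument is a bit longer but more constructive and self-contained, while the paper's is shorter because it only has to repair a failure rather than build the shadows from scratch.
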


\begin{proof} Pick any shadows $s(p)$ and $s(q)$ of $p$ and $q$ in $[y,z]$. Suppose without loss of
generality that the points $y,s(p),s(q),z$ occur in
this order on $[y,z]$.  Assume that $v'\notin [s(p),s(q)]$, otherwise we are done.
Suppose without loss of generality that
$v'\in [y,s(p)]$.  Since $x,s(p)$, and $s(q)$ all belong to a common closed halfplane
defined by $(x,v')=(x,v)$, the whole triangle $\Delta(x,s(p),s(q))$ also belong to
this halfplane. Since $p,q\in \Delta(x,s(p),s(q))$ and the line
$(x,v)$ separates $p$ and $q$, we conclude that $p\in (x,v)$. This implies
that $p\in [x,v']$ and consequently, $v'$ is a shadow of $p$ in $[y,z]$. Thus
selecting $v'$ as a shadow $s(p)$ of $p$ we are done.
\end{proof}

\begin{claim} \label{claim:seven-triangles}
The seven triangles $$\Delta(x,y,z'), \Delta(x,y',z),
\Delta(x',y,z), \Delta(x,y',z'), \Delta(x',y,z'),
\Delta(x',y',z),\Delta(x',y',z')$$ partition the triangle
$\Delta(x,y,z)$.
\end{claim}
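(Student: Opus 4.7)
My plan is to build the partition by iteratively applying the triangle-subdivision property (item (3) from the list just before Lemma~\ref{sphere1}), feeding in the three primed vertices one at a time, and then resolving a quadrangle ambiguity at the end.

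First, I would note that $z'\in \Delta(x',y',z') \subseteq \Delta(x^*,y^*,z^*) \subset \Delta(x,y,z)$ by Claim~\ref{claim:containment-triangle}, and that $z'$ lies in the interior of $\Delta(x,y,z)$ since by Claim~\ref{claim:critical-triangle}(b) it sits on the spheres $\partial B_\delta(x)$ and $\partial B_\delta(y)$, while all three sides of $\Delta(x,y,z)$ contain only points at distance $0$ or $\ge 2\delta$ from one of $x,y$. Property~(3) then yields $\Delta(x,y,z) = \Delta(x,y,z') \cup \Delta(x,z,z') \cup \Delta(y,z,z')$. Next I would show $y' \in \Delta(x,z,z')$ and, by the symmetric argument, $x' \in \Delta(y,z,z')$; for the former, Claim~\ref{claim:order-shadows} (shadow ordering $y,s(z'),s(y'),z$ on $[y,z]$) forces $y'$ to lie on the $z$-side of any line extending $[x,z']$, which combined with $y'\in \Delta(x,y,z)$ places $y'\in \Delta(x,z,z')$. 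Applying property~(3) to $\Delta(x,z,z')$ with interior point $y'$ and to $\Delta(y,z,z')$ with interior point $x'$, I obtain a seven-piece decomposition of $\Delta(x,y,z)$ into $\Delta(x,y,z'),\ \Delta(x,y',z),\ \Delta(x,y',z'),\ \Delta(y',z,z'),\ \Delta(x',y,z),\ \Delta(x',y,z'),\ \Delta(x',z,z')$.

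This already matches five of the seven target triangles; the discrepancy is that $\Delta(y',z,z')$ and $\Delta(x',z,z')$ appear in place of $\Delta(x',y',z)$ and $\Delta(x',y',z')$. The final step is to show that the region $Q := \Delta(y',z,z') \cup \Delta(x',z,z')$ is in fact a convex quadrangle $\mathrm{conv}(x',y',z,z')$ whose two diagonals $[z,z']$ and $[x',y']$ cross, and that the other triangulation of $Q$ is precisely $\Delta(x',y',z) \cup \Delta(x',y',z')$. I would verify convex position of $\{x',y',z,z'\}$ and show that $[z,z']$ meets $[x',y']$ by applying Pasch's axiom (Lemma~\ref{Pasch}) to $\Delta(x',y',z')$: the geodesic $[z,z']$ enters $\Delta(x',y',z')$ from outside and terminates at the vertex $z'$, so it must cross the opposite side $[x',y']$. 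Then property~(4) (quadrangle subdivision) supplies both triangulations, giving the claimed equality.

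The main obstacle is the last step — together with the position checks in the middle step — because in a Busemann surface the intuitive Euclidean picture is not automatic and one must lean on Claims~\ref{claim:critical-triangle}--\ref{claim:shadow-convexity}, on the Pasch and Peano axioms (Lemmas~\ref{Pasch} and~\ref{Peano}), and on property~(4) to rigorously guarantee convex position of $\{x',y',z,z'\}$, the crossing of the two diagonals, and hence the interchange of the two triangulations of $Q$.
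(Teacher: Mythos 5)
Your strategy (stellar subdivision at $z'$, then at $y'$ and $x'$, then a diagonal flip in the quadrangle $\conv(x',y',z,z')$) is genuinely different from the paper's, which instead cuts off the three corner triangles $\Delta(x,y,z')$, $\Delta(x,y',z)$, $\Delta(x',y,z)$, identifies the closure of the complement as a hexagon $x,z',y,x',z,y'$, and triangulates that hexagon by showing $[x',y'],[y',z'],[z',x']$ are its diagonals. However, your version has two genuine gaps, precisely at the places you flag as ``the main obstacle,'' and neither is filled by the lemmas you cite. First, the shadow ordering of Claim~\ref{claim:order-shadows} only places $y'$ on the $z$-side of the line $(x,z')$, i.e.\ in $\Delta(x,s(z'),z)$; but $\Delta(x,s(z'),z)=\Delta(x,z,z')\cup\Delta(z,z',s(z'))$ and the second piece lies in $\Delta(y,z,z')$, so the ordering alone does not decide on which side of the edge $[z,z']$ the point $y'$ falls, and hence does not give $y'\in\Delta(x,z,z')$. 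Ruling out $y'\in\Delta(y,z,z')$ needs an additional argument (the perimeter/ball argument that easily excludes $y'\in\Delta(x,y,z')$ does not transfer, because $\Delta(y,z,z')$ has only one side of known length $\delta$). Second, the diagonal flip: Pasch's axiom as stated in Lemma~\ref{Pasch} does not say that a geodesic from an exterior point to a vertex must cross the opposite side, and that assertion is false in general (take an exterior point close to the vertex $z'$ but outside $\Delta(x',y',z')$). To make $[z,z']\cap[x',y']\neq\varnothing$ and the convex position of $\{x',y',z,z'\}$ work, you need the ball argument the paper uses at the analogous spot: $\Delta(z,x',y')\subseteq B_{\delta}(z)$ by convexity of balls while $d(z,z')>\delta$, so $z'\notin\Delta(z,x',y')$, and symmetric statements for the other possible degeneracies.

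A smaller but real issue: $z'$ need not lie in the interior of $\Delta(x,y,z)$. When $d(x,y)=2\delta$ the set $\Delta(x,y,z)\cap B_{\delta}(x)\cap B_{\delta}(y)$ can reduce to the midpoint of $[x,y]$, so $z'\in[x,y]$ and $\Delta(x,y,z')$ degenerates; your justification (``all three sides contain only points at distance $0$ or $\ge 2\delta$ from one of $x,y$'') is incorrect. This is repairable by allowing degenerate pieces in the partition, but it should be addressed. In summary, the skeleton of your decomposition is sound and would yield the claim, but the two positioning statements it rests on are exactly the nontrivial content of the claim, and as written they are asserted rather than proved; the paper's hexagon route supplies those missing arguments via the characterization $\Delta(y,z,x')=\Delta(y,z,s_y(x'))\cap\Delta(y,z,s_z(x'))$ together with the shadow ordering and the convexity-of-balls computation above.
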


\begin{proof}
  First we show that
  $\Delta(y,z,x') = \Delta(y,z,s_y(x')) \cap \Delta(y,z,s_z(x'))$,
  where $s_y(x')$ and $s_z(x')$ are shadows of $x'$ in $[x,z]$ with
  respect to $y$ and in $[x,y]$ with respect to $z$.  Indeed, since
  $x'\in [y, s_y(x')]\cap [z, s_z(x')]$, by convexity of triangles we
  have
  $\Delta(y,z,x') \subseteq\Delta(y,z,s_y(x')) \cap \Delta(y,z,s_z(x'))$.
  To prove the converse inclusion, let
  $w \in \Delta(y,z,s_y(x')) \cap \Delta(y,z,s_z(x'))$ and suppose
  that $w\notin \Delta(y,z,x')$. Then
  $w \in \Delta(y,z,s_z(x')) \setminus \Delta(y,z,x')
  = \Delta(y,x',s_z(x')) \setminus [y,x']$. Since
  any shadow of $w$ in $[x,z]$ with respect to $y$ belongs to
  $[x,s_y(x')]\setminus \{ s_y(x')\}$, this contradicts
  $w \in \Delta(y,z,s_y(x'))$. In the same way, we can prove analogous
  statements for $\Delta(x,y,z')$ and $\Delta(x,z,y')$. From this and
  Claim~\ref{claim:order-shadows} we deduce that the triangles
  $\Delta(y,z,x'),\Delta(x,y,z'),$ and $\Delta(x,z,y')$ pairwise
  intersect only in the segments $[x,z'] \cap [x,y']$,
  $[y,x'] \cap [y,z']$ and $[z,x'] \cap [z,y']$.

  Let $P$ be the closure of $\Delta(x,y,z) \setminus
  (\Delta(y,z,x')\cup \Delta(x,y,z')\cup \Delta(x,z,y'))$.  Then
  $P$ is a hexagon with vertices $x, y', z, x', y, z'$ and sides
  $[x,y'],[y',z],[z,x'],[x',y],[y,z'],$ and $[z',x]$. We assert that
  $[x',y'], [x',z'],$ and $[y',z']$ are diagonals of $P$ (i.e., belong
  to $P$). If $[x',y']$ is not included in $P$, then $P$ contains a
  vertex in $\Delta(z,x',y')$ different from $z,x',y'$. Clearly,
  this vertex can only be $z'$. But $\Delta(z,x',y') \subseteq
  B_\delta(z)$ and $d(z,z') > \delta$, a contradiction. The three
  diagonals do not cross each other because they pairwise have a
  common extremity. Hence $[x',y'], [x',z'],[y',z']$ triangulate $P$,
  concluding the proof of the claim.
\end{proof}


\begin{claim}\label{claim:ball-cover}
$S \subseteq B_{\delta}(x') \cup B_{\delta}(y') \cup B_{\delta}(z').$
\end{claim}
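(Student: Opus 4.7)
The plan is to argue by contradiction: suppose there exists a point $v \in S$ covered by none of $B_\delta(x')$, $B_\delta(y')$, $B_\delta(z')$, and rule out every possible location of $v$.

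The first step is to show $v \notin \Delta(x,y,z)$. By Claim~\ref{claim:seven-triangles}, such a point would lie in one of the seven sub-triangles. Using Claims~\ref{claim:critical-triangle} and~\ref{claim:containment-triangle} together with the convexity of balls (Lemma~\ref{ball-convexity}), I check that each sub-triangle is contained in one of the three balls. For a ``corner'' sub-triangle like $\Delta(x,y,z')$, the equalities $d(z',x)=d(z',y)=\delta$ force all three vertices into $B_\delta(z')$, so the whole triangle lies there; $\Delta(x,y',z)$ and $\Delta(x',y,z)$ are analogous. A ``mixed'' sub-triangle like $\Delta(x,y',z')$ has $d(z',x)=\delta$ and $d(z',y')\le \delta$ (by Claim~\ref{claim:containment-triangle}), hence it also sits in $B_\delta(z')$. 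Finally, $\Delta(x',y',z')$ fits inside any of the three balls by Claim~\ref{claim:containment-triangle}. Hence every point of $\Delta(x,y,z)$ is covered, forcing $v$ outside.

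Since $v \notin \Delta(x,y,z)$, Lemma~\ref{separa} yields a line $\ell$ extending one side of $\Delta(x,y,z)$ and separating $v$ from the triangle; without loss of generality assume $\ell$ extends $[y,z]$, so $v$ lies on the opposite side of $\ell$ from $x$. I then split according to whether one of the geodesics $[v,x]$, $[v,y]$, $[v,z]$ meets the critical triangle $\Delta(x',y',z')$. If $[v,x]$ does, and exits $\Delta(x',y',z')$ through a side, say $[y',z']$, then $v \in \Sh_x(y',z')$; using convexity of shades (Lemma~\ref{shade}) together with the shadow-ordering results (Claims~\ref{claim:order-shadows} and~\ref{claim:shadow-convexity}) and the fact that $d(x,y')=d(x,z')=\delta$ with $d(x,v)\le 2\delta$, the plan is to push $v$ into $B_\delta(y') \cup B_\delta(z')$ via an argument paralleling the proof of Claim~\ref{claim:containment-triangle}, contradicting the choice of $v$. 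The remaining intersecting configurations (other sides, or segments from $y$ or $z$) are treated symmetrically.

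If none of $[v,x], [v,y], [v,z]$ meets $\Delta(x',y',z')$, then $v$ is ``beyond'' the line $\ell$ and shielded from the critical triangle. Here I would form the new triplet $\{v,y,z\}$ (the role of $x$ being now played by $v$, which lies on the opposite side of $\ell$), take its critical triangle $\Delta(v'',y'',z'')$ inside $\Delta(v,y,z)$, and argue via Lemma~\ref{perimeter} that $\pi(v'',y'',z'') > \pi(x',y',z')$, contradicting the maximality in the choice of $(x,y,z)$. The key geometric input is that because the segments $[v,x],[v,y],[v,z]$ avoid $\Delta(x',y',z')$, the sphere intersections $\partial B_\delta(v)\cap \partial B_\delta(y)$ and $\partial B_\delta(v)\cap \partial B_\delta(z)$ land strictly farther into $\Delta(v,y,z)$ than $y'$ and $z'$ lay inside $\Delta(x,y,z)$.

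The main obstacle is the last case: constructing the new critical triangle and establishing the strict perimeter inequality. This requires carefully locating the new sphere-intersection points inside $\Delta(v,y,z)$ and using the Pasch axiom (Lemma~\ref{Pasch}), cone convexity (Lemma~\ref{cones}), and the separation property of $\ell$ to compare the new critical triangle with $\Delta(x',y',z')$ so that monotonicity of perimeter forces a strict increase.
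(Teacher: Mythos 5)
Your overall architecture (contradiction, rule out $v\in\Delta(x,y,z)$ via the seven-triangle partition, then split the exterior according to whether a segment from a vertex to $v$ meets the critical triangle, and finish with a perimeter-maximality contradiction) matches the paper's, and your treatment of the interior case is correct. But the final case — the heart of the argument — is set up with the wrong triplet. When no segment $[v,x],[v,y],[v,z]$ meets $\Delta(x',y',z')$ and a line $(y,z)$ extending $[y,z]$ separates $v$ from $\Delta(x,y,z)$, you propose to compare $\pi(x',y',z')$ with the perimeter of the critical triangle of the triplet $\{v,y,z\}$. That critical triangle lies inside $\Delta(v,y,z)$, which is contained in the closed halfplane of $(y,z)$ containing $v$; the points $x',y',z'$ lie in the opposite open halfplane (they sit in the medial triangle $\Delta(x^*,y^*,z^*)$ on the $x$-side), so they are not contained in $\Delta(v,y,z)$ and Lemma~\ref{perimeter} gives you nothing. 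The paper instead keeps $x$ and replaces one of $y,z$ by $v$: taking $v'\in[x,v]\cap(y,z)$, one shows via the shadows that $x',y',z'\in\Delta(x,y,v)$ (or $\Delta(x,z,v)$), and that these three points avoid $B^{\circ}_{\delta}(x)\cup B^{\circ}_{\delta}(y)\cup B^{\circ}_{\delta}(v)$, hence lie in the new critical triangle $\Delta(x'',y'',v'')$; only then does monotonicity of perimeters yield the strict inequality contradicting the maximal choice of $x,y,z$. Your heuristic that the new sphere intersections ``land strictly farther into'' the new triangle does not produce the containment that Lemma~\ref{perimeter} requires.

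A second gap: you treat the configurations where $[x,v]$ exits $\Delta(x',y',z')$ through a side other than $[y',z']$ as ``symmetric'' to the case $v\in\Sh_x(y',z')$. They are not. The paper's Case 2 ($v\in\Sh_x(y',z')$) runs through the points $v'\in[x,v]\cap\partial B_{\delta}(x)$ and $v''\in[x,v]\cap\partial B_{\delta}(y)$ and an application of Lemma~\ref{quadrangle} to $[y,x']$ and $[v,v'']$; its Case 3 ($v\in\Sh_x(x',y')\cap\Sh_x(x',z')$ but $v\notin\Sh_x(y',z')$) is a genuinely different and longer argument, requiring the shadow-ordering claims to place $x'$ inside $\Delta(y,v,v')$ in two separate steps before Lemma~\ref{perimeter} can be applied. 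As written, your proposal supplies neither argument in checkable form, so even the intersecting cases remain a plan rather than a proof.
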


\begin{figure}
  \begin{center} \small
  \begin{tabular}{cc}
      \begin{tikzpicture}[x=0.8cm,y=0.8cm]
\def \radius{3.5}
\def \radiusBall{\radius*0.92}
\node (c) at (0,0) {};
\draw (c) node [stpoint] (y) at +(-30:\radius) {};
\draw (c) node [stpoint] (x) at +(-30-120:\radius) {};
\draw (c) node [stpoint] (z) at +(-30-120-120:\radius) {};
\draw (x) -- (y); 
\draw (y) -- (z);
\draw (x) -- (z);
\draw[name path = ball_x,densely dotted] (x) +(-10:\radiusBall) arc (-10:70:\radiusBall);
\draw[name path = ball_y,densely dotted] (y) +(110:\radiusBall) arc (110:190:\radiusBall);
\draw[name path = ball_z,densely dotted] (z) +(230:\radiusBall) arc (230:310:\radiusBall);
\draw[name intersections = {of = ball_x and ball_y}] (intersection-1) node (z') [stpoint] {};
\draw[name intersections = {of = ball_y and ball_z}] (intersection-1) node (x') [stpoint] {};
\draw[name intersections = {of = ball_x and ball_z}] (intersection-1) node (y') [stpoint] {};
\draw (x') -- (y');
\draw (y') -- (z'); 
\draw (x') -- (z');
\draw (z) -- (y');
\draw (z) -- (x');
\draw (x) -- (z');
\draw (x) -- (y');
\draw (y) -- (z');
\draw (y) -- (x');
\path (x') +(-0.3,0.3) node {$x'$};
\path (y') +(0.3,0.3) node {$y'$};
\path (z') node[above=0.2, right] {$z'$};
\path (x) node[below left] {$x$};
\path (y) node[below right] {$y$};
\path (z) node[above] {$z$};
\end{tikzpicture}
     &
      \begin{tikzpicture}[x=0.7cm,y=0.7cm,label distance=-3pt]
        \fill[fill=PowderBlue]
          (6.867,6.533) .. controls (6.519,5.800) and (5.860,4.938) ..
          (5.038,4.069) .. controls (5.561,3.150) and (6.176,2.308) ..
          (6.826,1.605) .. controls (7.892,1.735) and (8.788,1.810) .. (9.333,1.833);
        \node[stpoint,label=right:$z$] (z) at (8.000,0.517) {};
        \node[stpoint,label=above:$y$] (y) at (4.183,5.867) {};
        \node[stpoint,label=left:$x$] (x) at (0.350,0.300) {};
        \node[stpoint,label=right:$x'$] (x') at (5.250,2.350) {};
        \node[stpoint,label=below:$y'$] (y') at (4.072,1.190) {};
        \node[stpoint,label=left:$z'$] (z') at (3.238,2.374) {};
        \node[stpoint,label=right:$v$] (v) at (6.717,4.433) {};
        \node[stsmallpoint,label=right:$v''$] (v'') at (4.085,2.143) {};
        \node[stsmallpoint,label=left:$v'$] (v') at (3.801,1.969) {};

        \draw (x) .. controls (2.983,0.850) and (5.583,0.733) .. (z);
        \draw (y) .. controls (5.033,3.667) and (6.517,1.667) .. (z);
        \draw (x) .. controls (1.800,1.583) and (3.833,4.067) .. (y);
        \draw (z') .. controls (4.017,2.267) and (4.550,2.183) .. (x');
        \draw (y') .. controls (3.933,1.667) and (3.733,1.983) .. (z');
        \draw (x') .. controls (4.417,1.867) and (4.117,1.550) .. (y');

        \draw[draw=RoyalBlue] (x) .. controls (3.650,1.583) and (5.967,3.250) .. (v);
        \draw[draw=RoyalBlue] (x) .. controls (3.400,1.283) and (7.783,1.767) .. (9.333,1.833);
        \draw[draw=RoyalBlue] (x) .. controls (2.867,1.767) and (5.950,4.600) .. (6.867,6.533);

        \draw[thick,draw=OrangeRed]
          (x) .. controls (1.670,0.813) and (2.833,1.388) .. (v')
          node[pos=0.4,below,text=Red] {$\delta$};
        \draw[thick,draw=OrangeRed]
          (v'') .. controls (5.385,2.956) and (6.383,3.800) .. (v)
          node[pos=0.7,above,text=OrangeRed] {$\leq \delta$};
        \draw[thick,draw=OrangeRed]
          (y) .. controls (4.283,4.117) and (4.250,2.883) .. (v'')
          node[pos=0.5,left,text=OrangeRed] {$\delta$};
        \draw[thick,draw=SeaGreen]
          (x') .. controls (5.617,2.950) and (6.217,3.583) .. (v)
          node[pos=0.3,right,text=SeaGreen] {$\leq \delta$};
        \draw[thick,draw=SeaGreen]
          (y) .. controls (4.650,4.017) and (4.983,3.017) .. (x')
          node[pos=0.3,right,text=SeaGreen] {$\delta$};
      \end{tikzpicture}
    \\
      Case 1.
    &
      Case 2.
    \\
    \begin{tikzpicture}[x=0.7cm,y=0.7cm,label distance=-3pt]
      \fill[PowderBlue]
        (8.283,9.683) .. controls (8.063,8.969) and (7.204,7.696) ..
        (6.099,6.305) .. controls (6.396,5.603) and (6.693,4.905) ..
        (6.993,4.237) .. controls (8.149,4.842) and (9.166,5.313) .. (9.817,5.483);
      \node[stpoint,label=left:$x$] (x) at (0.583,0.600) {};
      \draw[draw=RoyalBlue] (x) .. controls (3.167,2.083) and (7.967,5.000) .. (9.817,5.483);
      \draw[draw=RoyalBlue] (x) .. controls (3.367,1.783) and (8.900,2.433) .. (9.667,2.367);
      \draw[draw=RoyalBlue] (x) .. controls (2.950,2.333) and (7.717,7.850) .. (8.283,9.683);
      \node[stpoint,label=right:$s(v')$] (sv') at (6.594,5.144) {};
      \node[stpoint,label=left:$v'$] (v') at (5.011,3.839) {};
      \node[stpoint,label=right:$v$] (v) at (8.367,6.817) {};
      \node[stpoint,label=left:$z'$] (z') at (4.417,2.821) {};
      \node[stpoint,label=below:$y'$] (y') at (5.881,1.952) {};
      \node[stpoint,label=left:$x'$] (x') at (5.568,5.651) {};
      \node[stpoint,label=right:$s(z')$] (sz') at (6.993,4.237) {};
      \node[stpoint,label=right:$s(y')$] (sy') at (7.953,2.245) {};
      \node[stpoint,label=right:$s(x')$] (sx') at (6.099,6.305) {};
      \node[stpoint,label=above:$y$] (y) at (5.150,8.550) {};
      \node[stpoint,label=below:$z$] (z) at (8.617,1.100) {};
      \draw[thick,dotted,draw=OrangeRed]
        (y) .. controls (5.933,7.683) and (7.233,6.950) .. (v);
      \draw[thick,dotted,draw=OrangeRed]
        (x') .. controls (6.550,5.933) and (7.617,6.350) .. (v);
      \draw[thick,draw=OrangeRed]
        (y) .. controls (5.467,7.333) and (5.633,6.150) .. (x')
        node[pos=0.4,right,text=Red] {$= \delta$};
      \draw[thick,draw=OrangeRed]
        (y) .. controls (5.367,5.700) and (5.117,4.450) .. (v')
        node[pos=0.3,left,text=Red] {$\leq \delta$};
      \draw[draw=RoyalBlue] (x) .. controls (3.333,2.350) and (7.733,5.967) .. (v);
      \draw (z') .. controls (4.950,2.600) and (5.533,2.250) .. (y');
      \draw (x') .. controls (5.317,4.433) and (4.867,3.333) .. (z');
      \draw (x') .. controls (5.467,4.167) and (5.533,2.850) .. (y');
      \draw (y) .. controls (6.283,5.917) and (7.433,2.933) .. (z);
      \draw (x) .. controls (2.983,1.283) and (6.867,1.450) .. (z);
      \draw (x) .. controls (2.450,2.400) and (5.000,6.567) .. (y);
      \draw[thick,draw=OrangeRed]
        (v') .. controls (6.648,5.148) and (8.032,6.368) .. (v)
        node[pos=0.7,below,Red] {$\leq \delta$};
    \end{tikzpicture}
    &
      \begin{tikzpicture}[x=0.7cm,y=0.7cm,label distance=-3pt]
        \fill[fill=PowderBlue]
          (2.233,0.883) .. controls (2.871,1.313)  and (3.567,2.245) ..
          (4.204,3.326) .. controls (5.204,3.349) and (6.241,3.345) ..
          (7.168,3.317) .. controls (7.544,2.307) and (7.960,1.449) .. (8.300,0.950);
        \fill[fill=PowderBlue]
          (8.500,7.567) .. controls (8.215,7.227) and (7.697,6.764) ..
          (7.012,6.253) .. controls (7.454,5.622) and (7.932,5.005) ..
          (8.389,4.476) .. controls (9.071,4.566) and (9.582,4.619) .. (9.850,4.633);
        \draw (5.079,4.971) .. controls (5.400,4.617) and (5.600,4.383) .. (5.654,4.054);
        \draw (6.547,5.295) .. controls (6.117,5.083) and (5.417,4.917) .. (5.079,4.971);
        \draw (5.654,4.054) .. controls (5.883,4.483) and (6.200,4.950) .. (6.547,5.295);
        \node[stpoint,label=left:$y$] (y) at (1.267,3.150) {};
        \node[stpoint,label=above:$x$] (x) at (6.150,7.583) {};
        \draw[draw=RoyalBlue] (x) .. controls (6.333,5.150) and (7.517,2.100) .. (8.300,0.950);
        \draw[draw=RoyalBlue] (x) .. controls (5.517,5.600) and (3.717,1.883) .. (2.233,0.883);
        \draw[draw=RoyalBlue] (y) .. controls (5.050,4.067) and (8.917,4.583) .. (9.850,4.633);
        \draw[draw=RoyalBlue] (y) .. controls (4.600,4.350) and (7.633,6.533) .. (8.500,7.567);
        \node[stpoint,label=right:$v$] (v) at (9.350,1.933) {};
        \node[stpoint,label=below:$q$] (q) at (7.168,3.317) {};
        \node[stpoint,label=below:$p$] (p) at (4.204,3.326) {};
        \node[stpoint,label=right:$z$] (z) at (9.783,3.117) {};
        \draw[thick,draw=OrangeRed] (y) .. controls (3.783,3.033) and (7.883,2.350) .. (v);
        \draw[thick,draw=OrangeRed] (x) .. controls (6.950,5.100) and (8.017,3.033) .. (v);
        \node[stpoint,label=below:$v'$] (v') at (8.147,3.275) {};
        \draw (x) .. controls (7.183,5.833) and (8.833,3.783) .. (z);
        \draw (y) .. controls (3.733,3.433) and (8.283,3.383) .. (z);
        \draw[thick,draw=OrangeRed] (x) .. controls (4.950,5.500) and (2.750,3.783) .. (y);
        \draw (5.617,1.967) node {$\Sh_x$};
        \draw (8.650,5.867) node {$\Sh_y$};
        \draw (5.783,4.750) node {$\Delta$};
      \end{tikzpicture}
    \\
      Case 3. & Case 4.
    \\
  \end{tabular}
  \end{center}
  \caption{Illustration of the proof of Claim~\ref{claim:ball-cover}.}
  \label{fig:prop2-cases}
\end{figure}
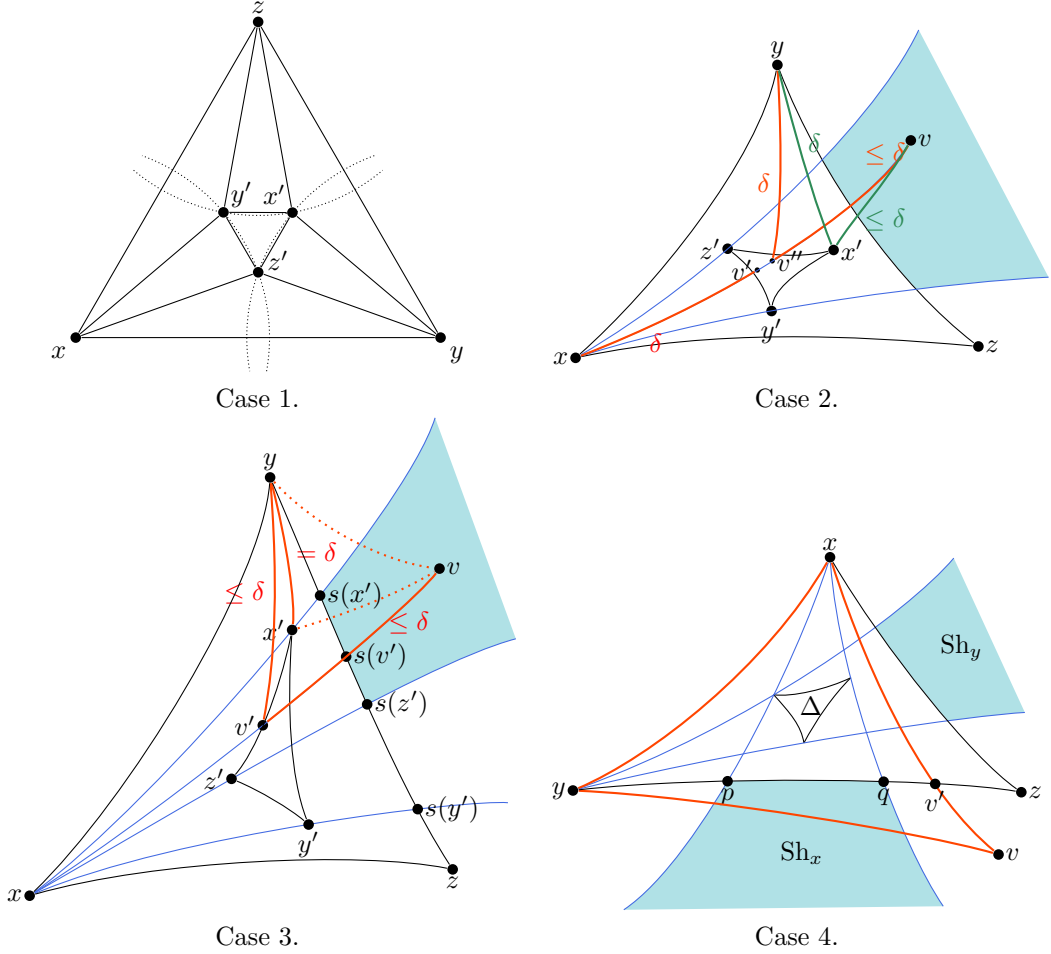

\begin{proof} 
Pick any point $v \in S$. We distinguish four cases,
depending of the location of $v$.

\medskip\noindent {\sf Case 1:} $v\in \Delta(x,y,z)$.

Then $v$ is located in one of the seven triangles defined in
Claim~\ref{claim:seven-triangles}. First suppose that $v\in
\Delta(x',y',z')$. Since by Claim~\ref{claim:containment-triangle}
each side of $\Delta(x',y',z')$ is of length at most ${\delta}$, by
convexity of balls, $\Delta(x',y',z')$ belong to each of the balls
$B_{\delta}(x'),B_{\delta}(y'),$ and $B_{\delta}(z')$, whence
$d(x',v),d(y',v),d(z',v)\le \delta$.

Now suppose that $v\in \Delta(x,y',z')\cup \Delta(x',y,z')\cup
\Delta(x',y',z),$ say $v \in \Delta(x,y',z')$. Analogously to the
previous case, since the sides of the triangle $\Delta(x,y',z')$ are
at most $\delta$, we conclude that $d(y',v),d(z',v) \le
\delta$. Finally, suppose that $v \in \Delta(x,y,z') \cup \Delta(x',y,z)
\cup \Delta(x,y',z),$ say $v \in \Delta(x,y,z')$. Then $x,y \in
B_{\delta}(z')$, whence $v \in \Delta(x,y,z') \subseteq
B_{\delta}(z')$, yielding  $d(z',v) \le \delta$.  This concludes the proof
of Case 1.

\medskip
Further, we will assume that $v \notin \Delta(x,y,z)$.

\medskip\noindent {\sf Case 2:}
$v \in \St_x(y',z') \cup \St_y(x',z') \cup \St_z(x',y')$.

Suppose without loss of generality that $v$ belongs to the shade
$\St_x(y',z')$. If $x' \in [x,v]$, then
$d(x',v) = d(x,v) - d(x,x') \leq \delta$ and we are done since the
diameter is at most $2\delta$, hence we assume from now that
$x' \notin [x,v]$. We have $[x,v] \cap [y',z'] \ne \varnothing$. Then
by Lemma~\ref{lemma:triangle-shade} $[x,v]$ intersects one of the
sides $[z',x']$ and $[y',x']$ of $\Delta(x',y',z')$, say $[z',x']$.
But then $[x,v]$ intersects
$\partial B_{\delta}(x)\cap \Delta(x',y',z')$ in a point $v'$ and
$\partial B_{\delta}(y)\cap \Delta(x',y',z')$ in a point $v''$, where
$v'\in [x,v'']$. Since $d(v,x)\le 2\delta$ and $d(x,v')=\delta$, we
conclude that $d(v,v'')\le d(v,v')\le 2\delta-\delta=\delta$.

Next, we assert that $[y,x'] \cap [v,v''] \ne \varnothing$.  Let $s(x')$
be a shadow of $x'$ on $[y,z]$; we may assume that $[x,v] \cap
[y,s(x')] \neq \varnothing$. Then considering $\Delta(y,s(x'),x')$,
the geodesic $[x,v]$ intersects another of its side, either $[y,x']$ or
$[x',s(x')]$. In the latter case, it follows that $x' \in [x,v]$ and
we excluded that case. Hence we can assume the former case. Then as
$v''$ is not in the interior of $\Delta(y,x',z)$ by
Claim~\ref{claim:seven-triangles}. $[v'',v] \cap [y,x'] \neq
\varnothing$, as asserted.

Hence, we can suppose that $[y,x']\cap [v,v'']\ne \varnothing$. By Lemma
~\ref{quadrangle}, $d(y,v'')+d(v,x')\le d(y,x')+d(v,v'')$. Since
$d(y,v'')=d(y,x')=\delta$ and $d(v,v'')\le \delta$, we obtain that
$d(v,x')\le \delta$, concluding the proof of Case 2.

\medskip\noindent {\sf Case 3:}
$v\in \St_x(x',y',z') \cup \St_y(x',y',z') \cup \St_z(x',y',z')$.

Suppose without loss of generality that $v \in \St_x(x',y',z')$. In
view of Case 2, we can assume that $v \notin \St_x(y',z')$. By
Lemma~\ref{lemma:triangle-shade}
$v \in \St_x(x',y') \cap \St_x(x',z')$. By definition of
$\St_x(x',z')$, there is a line $(x,v)$ passing via $x$ and $v$ and
separating $x'$ from $z'$. Let $v' \in [x',z'] \cap (x,v)$. Let
$s(v')$ be a shadow of $v'$ in $[y,z]$ such that
$s(v') \in [x,v] \cap [y,z]$ (it exists because
$v' \in [x,v]$). Notice that $s(v') \notin \St_x(y',z')$. Indeed,
otherwise there exists a line $(x,s(v'))$ extending $[x,s(v')]$ and
separating the points $y'$ and $z'$.  But then $[x,s(v')]$ separates
$y'$ and $z'$ in $\Delta(x,y,z)$.  Therefore any line extending
$[x,s(v')]$, in particular the line $(x,v)$, also separates the points
$y'$ and $z'$. This contradicts the assumption
$v \notin \St_x(y',z')$. Hence $s(v') \notin \St_x(y',z')$.

Consider the shadows $s(x'),s(y'),$ and $s(z')$ of $x',y',$ and $z'$ in $[y,z]$
such that $s(v')\in [s(x'),s(z')]$ (such shadows
$s(x')$ and $s(z')$ exist by Claim~\ref{claim:shadow-convexity}).
Since $\St_x(y',z')$ is convex (Lemma~\ref{shade}) and $s(v') \notin
\St_x(y',z')$, we conclude that $s(v')$ does not belong to
$[s(z'),s(y')]$.  By Claim~\ref{claim:order-shadows}, either $s(v')$
belongs to $[y,s(z')]$ or $s(v')$ belongs to $[s(y'),z]$, say the
first. Consequently, further we will assume that $s(v') \in [y,s(z')]$
and $s(v') \ne s(z')$.  We have:
\begin{itemize}
\item[(i)] $d(y,v') \leq \delta$, because $v' \in [x',z']$, $d(y,x') =
d(y,z') = \delta$ and $B_{\delta}(y)$ is convex (Lemma~\ref{ball-convexity}),
\item[(ii)] $d(v,v') = d(v,x) - d(v',x) \leq 2\delta - \delta = \delta$,
because $v' \in [v,x]$ and $d(v',x) > \delta$ by minimality of
$\pi(x',y',z') \geq \pi(x',y',v')$.
\end{itemize}

Assume now that $x' \in \Delta(y,v,v')$. Then applying
Lemma~\ref{perimeter} to the triangles $\Delta(y,v,x')$ and
$\Delta(y,v,v')$ having $[y,v]$ as a side, we obtain $d(y,x') +
d(x',v) \leq d(y,v') + d(v',v)$. Since $d(y,x')=\delta$ and $d(y,v'),
d(v',v) \le \delta$, we derive that $d(x',v) \leq \delta$.

It remains to prove that $x' \in \Delta(y,v,v')$. We prove this in
two steps.  First, we show that $x' \in \Delta(x,y,v)$. Since
$s(v')\in [y,s(z')]$ and $s(v')\ne s(z')$, the point $v'$ belongs to
$\Delta(y,x,s(z'))\setminus [x,s(z')]$. Since $v'\in [x',z']$, we
conclude that $x'$ also belongs to $\Delta(y,x,s(z'))\setminus
[x,s(z')]$. Moreover, since $s(v')\in [s(x'),s(z')]$, the point
$s(x')$ is located between $y$ and $s(v')$. Since $x'\in [s(x'),x],$
$x'$ belongs to the triangle $\Delta(x,y,s(v'))$ and therefore to the
triangle $\Delta(x,y,v)$.

Second, we prove by way of contradiction that $x' \notin
\Delta(x,y,v')$. Otherwise, if $x'\in \Delta(x,y,v')$, let $z''$
be a point in the intersection of $[x,y]$ and a geodesic line
extending $[x',v']\subseteq [x',z']$.  Then $v' \in [z',z''] \subset
\Delta(x,y,z')$.  Applying Lemma~\ref{perimeter} to the triangles
$\Delta(x,y,z')$ and $\Delta(x,y,x')$ having $[x,y]$ as a side, we
get $2 \delta < d(y,x') + d(x',x) \leq d(y,z') + d(z',x) = 2\delta$, a
contradiction. This shows that indeed $x' \in \Delta(y,v,v')$ and
concludes the proof of Case 3.

\medskip\noindent{\sf Case 4:}
$v \notin \St_x(x',y',z') \cup \St_y(x',y',z') \cup \St_z(x',y',z')$.

Suppose without loss of generality that $v$ is separated from
$\Delta(x',y',z')$ by a line $(y,z)$ extending $[y,z]$ (such a line
exists by Lemma~\ref{separa}).  Suppose also by way of contradiction
that $v \notin B_{\delta}(x') \cup B_{\delta}(y') \cup B_{\delta}(z')$.
Since the shade $\St_x(x',y',z')$ is convex by Lemma
~\ref{shade}, the intersection of $\St_x(x',y',z')$ with
$(y,z)$ (and with $[y,z]$) is a geodesic segment $[p,q]$.  Let $v' \in
[x,v] \cap (y,z)$.  We assert that $v' \notin [p,q]$.  Indeed, if $v' \in
[p,q]$, then $v' \in \St_x(x',y',z')$, thus the intersection
$[x,v'] \cap \Delta(x',y',z')$ is nonempty. Since $[x,v'] \subseteq
[x,v]$, we conclude that $[x,v] \cap \Delta(x',y',z') \ne \varnothing$,
contrary to our assumption that $v \notin
\St_x(x',y',z')$. Consequently, $v' \notin [p,q]$.  Then one
can easily see that either $[p,q] \subseteq [y,v']$ or $[p,q] \subseteq
[v',z]$ holds, say the first. In this case, since
$s(x'),s(y'),s(z') \in [p,q],$ and $x' \in [x,s(x')]$, $y' \in [x,s(y')]$,
$z'\in [x,s(z')]$, we deduce that $x',y',z' \in \Delta(x,y,v)$. This
shows that either $\Delta(x',y',z') \subseteq \Delta(x,y,v)$ or
$\Delta(x',y',z') \subseteq \Delta(x,z,v)$ holds, say the first.

Let $\Delta(x'',y'',v'')$ be the critical triangle of the triplet
$x,y,v$. We assert that $x',y',z'\in \Delta(x'',y'',v'')$. For this
we will first prove that
\begin{displaymath}
\Delta(x,y,v) \setminus (B^{\circ}_{\delta}(x) \cup B^{\circ}_{\delta}(y) \cup B^{\circ}_{\delta}(v))
  \subseteq \Delta(x'',y'',v'').
\end{displaymath}
Indeed, since $d(y,x'') = d(y,v'') = \delta$ and the balls are convex,
$\Delta(y,v'',x'') \subseteq B_{\delta}(y)$. Moreover,
$\Delta(y,v'',x'') \setminus [v'',z''] \subseteq
B^{\circ}_{\delta}(y)$.  Indeed, any point $p \in
\Delta(y,v'',x'') \setminus [v'',z'']$ belongs to a geodesic segment
$[y,q]$ with $q \in [v'',z'']$. Since $q \in B^{\circ}_{\delta}(y)$ and
$p \ne q$, necessarily $d(y,p) < \delta$. Analogously, we obtain that
$\Delta(y'',x'',v) \setminus [y'',x''] \subseteq
B_{\delta}^{\circ}(v)$ and $\Delta(x,v'',y'') \setminus
[v'',y''] \subseteq B_{\delta}^{\circ}(x)$. On the other hand, each of
the triangles $\Delta(x,y,v''), \Delta(x,y'',v),$ and
$\Delta(x'',y,v)$ is covered by two of the three open balls
$B^{\circ}_{\delta}(x), B^{\circ}_{\delta}(y),$ and
$B^{\circ}_{\delta}(v)$. For example, $\Delta(x,y,v'')$ is covered
by $B^{\circ}_{\delta}(x)$ and $B^{\circ}_{\delta}(y)$. Indeed, by
monotonicity of perimeters (Lemma~\ref{perimeter}), for any point
$p \in \Delta(x,y,v'')$, we have $\min \{d(p,x), d(p,y)\} \le
\delta$. Moreover, by the same result, if $d(x,p) = \delta$, then
$d(y,p) < \delta$. This establishes that $\Delta(x,y,v'') \subseteq
B^{\circ}_{\delta}(x) \cup B^{\circ}_{\delta}(y)$. Now, the required
inclusion follows from Claim~\ref{claim:seven-triangles}.

Since $z'$ has distance $\delta$ to $x$ and $y$ and $z'$ has distance
$> \delta$ to $z$ and $v$, from previous inclusion we obtain
$z' \in \Delta(x'',y'',v'')$. Analogously, since $x'$ has distance
$\delta$ to $y$ and $z$ and $x'$ has distance $> \delta$ to $x$ and
$v$, we conclude that $x'\in \Delta(x'',y'',v'')$ (the proof for $y'$
is analogous). Hence $x',y',z'\in \Delta(x'',y'',z'')$. From
Lemma~\ref{perimeter} we conclude that
$\pi(x',y',z') < \pi(x'',y'',v'')$, contrary to the choice of the
triplet $x,y,z$ as a triplet having a critical triangle
$\Delta(x',y',z')$ of maximal perimeter. This concludes the proof of
Claim~\ref{claim:ball-cover} and of
Proposition~\ref{prop:three-balls}.
\end{proof}
\end{proof}

\subsection{Proof of Proposition~\ref{prop:twenty-three-balls}} \label{subsection:twenty-three-balls}

We start by restating Proposition \ref{prop:twenty-three-balls}:

\medskip\noindent
{\bf Proposition 3}. {\it
 Let $S$ be a compact subset of a Busemann surface $({\mathcal S},d)$
and let $u,v\in S$ be a diametral pair of $S$. Then $B_{2{\delta}}(v)\cap S$
can be covered by \constant{} balls
of radius ${\delta}$.
}

\begin{proof}
Let $S$ be a compact subset of a Busemann surface $({\mathcal S},d)$.
Let $u,v$ be a diametral pair of $S$, i.e., $u,v\in S$ and
$d(u,v)=\mbox{diam}(S)$. Let $\ell:=(u,v)$ be a line extending $[u,v]$ and
let $S'$ and $S''$ be the intersections of $S$ with the closed
halfplanes $\Pi'_{\ell}$ and $\Pi''_{\ell}$ defined by $\ell$. We will
show how to cover each of the sets $S'_0:=S'\cap B_{2{\delta}}(v)$ and
$S''_0:=S''\cap B_{2{\delta}}(v)$ with a fixed number of balls of
radius ${\delta}$.  We will establish this for $S'_0$, the same method
works for $S''_0$; at the end we will optimize over the two solutions
since some balls from different solutions have the same centers and
thus coincide.

If diam$(S)\le 2{\delta}$, we simply apply Proposition
~\ref{prop:three-balls}. Therefore, further we will assume that
diam$(S)>2{\delta}$. By Lemma~\ref{diameter}, $u,v$ is also a
diametral pair of conv$(S)$ and of conv$(S')$. Let $x$ be a point of
$[u,v]$ at distance $2{\delta}$ from $u$. Let $w$ be a point of
conv$(S')\cap \partial B_{2\delta}(v)$ maximizing the distance to $u$,
i.e., maximizing the perimeter $\pi(u,v,w)$.
Such a point $w$ exists because the set conv$(S')\cap \partial
B_{2\delta}(v)$ is compact and nonempty (the point $x$ belongs to this
intersection).

Let $x'$ be a point of $[u,w]$ at distance $\left(1 -
  \frac{2{\delta}}{d(u,v)}\right) d(u,w)$ from $w$. Notice that since
$d(u,w)\le d(u,v)$, we have $d(x',w)\le 2{\delta}$.  Notice also that
if we set $t:=1-\frac{2{\delta}}{d(u,v)}$, then $0<t<1$ and $x$ is the
point of $[u,v]$ such that $d(u,x)=t\cdot d(u,v)$ and $x'$ is the
point of $[u,w]$ such that $d(u,x')=t\cdot d(u,w)$. By Proposition A(iv)
$d(x,x')\le t\cdot d(v,w)<d(v,w)\le 2{\delta}$. On
the other hand, $d(u,x)-d(u,x')=t\cdot(d(u,v)-d(u,w))\ge 0$. Since
$d(x,v)=2{\delta}$, we conclude that $d(v,x')\ge 2{\delta}$ and
equality $d(v,x')=2{\delta}$ holds if and only if $x=x'$ (because in
case of equality, $x$ and $x'$ belong to the geodesic $[u,v]$ and thus
they must coincide).  Let $A$ be the quadrilateral of
$\Delta(u,v,w)$ bounded by the four geodesics $[x,x'],[x',w],[w,v],$
and $[v,x]$.

\begin{claimprop3} \label{claim:first}
$\Delta(u,v,w) \cap S'_0 = A \cap S'_0$.
\end{claimprop3}

\begin{proof} Indeed, suppose by way of contradiction that there exists a point
$z\in \Delta(u,v,w)\cap S'_0$ not belonging to $A$. Let $z'$ be a
point obtained as the intersection of $[x,x']$ with the extension of
the geodesic $[u,z]$ through $z$. Then $z\in [u,z']$ and $z'\ne z$,
yielding $d(u,z)<d(u,z')$. Since $d(u,z')\le \max\{
d(u,x),d(u,x')\}=d(u,x)$ by the convexity of balls, we deduce that
$d(u,z)<d(u,x)$. Since $d(v,z)\le 2{\delta}$ and $d(v,x)=2{\delta}$,
we conclude that $d(u,z)+d(z,v)<d(u,x)+d(x,v)$, contrary to the choice
of $x$ from $[u,v]$. This finishes the proof of Claim~\ref{claim:first}.
\end{proof}
\medskip

Let $B$ be the region of the halfplane $\Pi'$ consisting of
all points $z$ such that $[u,z]\cap [v,w]\ne \varnothing$. Finally,
let $C$ be the region of $\Pi'$ consisting of all points $z$ such that
$[z,v] \cap [u,w] \ne \varnothing$. Notice that $B \cup C$ consists of
precisely those points $z$ of $\Pi'$ such that $\Delta(u,v,w)$ and
$\Delta(u,v,z)$ are not comparable.

\begin{claimprop3} \label{claim:divide-regions}
$S'_0 \subseteq A \cup B \cup C$.
\end{claimprop3}

\begin{proof}   Using the remark preceding the statement, suppose by way of
  contradiction that $S'_0 \cap \Pi'$ contains a point $z$ such that
  $\Delta(u,v,w)$ is properly included in $\Delta(u,v,z)$. If
  $d(v,z) = 2 \delta$, then $\pi(u,v,w) < \pi(u,v,z)$ by Lemma
  ~\ref{perimeter}, and we will obtain a contradiction with the choice
  of $w$. Thus $d(v,z) < 2\delta$. Since $u \notin B_{2\delta}(v)$,
  the geodesic $[u,z]$ intersects $\partial B_{2\delta}(v)$ in a point
  $w'$. Let $w''$ be a common point of $[u,z]$ and a geodesic
  extension $(v,w)$. Then $w \in [w'',v]$. Since $d(v,w) = 2\delta$,
  we have $d(v,w'') > 2\delta$. Since $w'$ and $w''$ are located on
  $[u,z]$, $d(v,z) \le 2\delta$, and $d(v,w') = 2 \delta$, the
  convexity of $B_{2\delta}(v)$ implies that $w'$ is located on
  $[u,z]$ between $w''$ and $z$. This means that $\Delta(u,v,w)$ is
  properly contained in $\Delta(u,v,w')$. By Lemma~\ref{perimeter},
  $\pi(u,v,w') > \pi(u,v,w)$. Now, since
  $w' \in [z,u], d(v,w') = 2\delta,$ and $z \in S'_0$, we conclude
  that $w' \in \mbox{conv}(S') \cap \partial B_{2\delta}(v)$,
  contradicting the choice of $w$. This finishes the proof of
  Claim~\ref{claim:divide-regions}.
\end{proof}

Now, we will analyze how to cover the points of $S'_0$ in each of the
regions $A,B,C$.

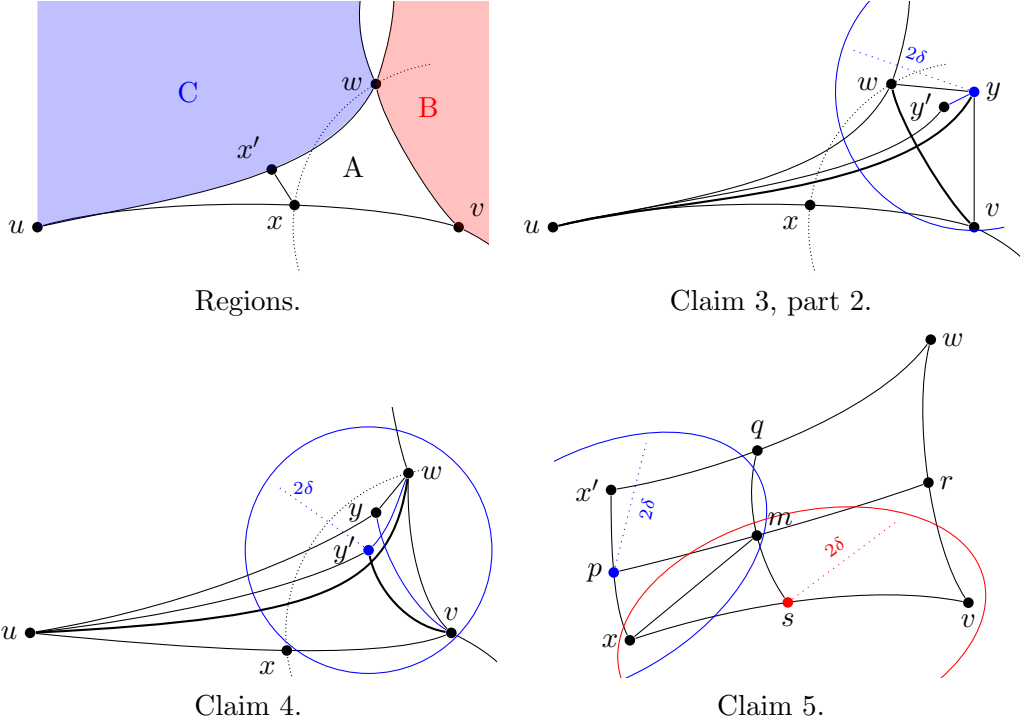
\begin{figure}\begin{tabular}{cc}
\begin{tikzpicture}[x=0.4cm,y=0.4cm]
\path[clip,use as bounding box] (-13.5,-12) rectangle  (2.5,-3);

\node [stpoint] (u) at (-12.5,-10.5) {};
\node [stpoint] (v) at (1.5,-10.5) {};
\draw[name path = sphere,densely dotted] (v) +(100:5.5) arc (100:240:5.5);
\path[name path = larger_sphere] (v) +(100:6.5) arc (100:240:6.5);
\draw (v) node [stpoint] (w) at +(120:5.5) {};

\draw[name path = geodesic_uv] (u) .. controls (-9,-9.5) and (-2,-9.5) .. (v);
\draw[name path = geodesic_uw] (u) .. controls (-8,-9.5) and (-3,-9) .. (w);

\draw[name intersections = {of = geodesic_uv and sphere}] (intersection-1) node (x) [stpoint] {};
\draw[name intersections = {of = geodesic_uw and larger_sphere}] (intersection-1) node (x') [stpoint] {};

\draw (x) -- (x');
\draw (v) .. controls (0.5,-9.5) and (-1,-7) .. (w);
\draw (w) .. controls (-0.5,-3.5) and (-0.5,-2) .. (-1,-1);
\draw (w) .. controls (-2,-4) and (-2,-2.5) .. (-1,-0.5);
\draw (v) .. controls (3.5,-11.5) and (4,-12.5) .. (4,-13.5);

\fill[blue, nearly transparent] (u.center) .. controls (-8,-9.5) and (-3,-9) .. (w.center)
      .. controls (-2,-4) and (-2,-2.5) .. (-1,-0.5) -- (-12.5,-1) -- cycle;

\fill[red,nearly transparent] (v.center)
  .. controls (3.5,-11.5) and (4,-12.5) .. (4,-13.5) -- (4,-0.5)
  -- (-1,-1) .. controls (-0.5,-2) and (-0.5,-3.5) .. (w.center)
  .. controls (-1,-7) and (0.5,-9.5) .. (v.center);

\path (u) node[left=1pt] {$u$};
\path (v) node[above right=0.5pt] {$v$};
\path (w) node[left=1pt] {$w$};
\path (x) node[below left=0.5pt] {$x$};
\path (x') node[above left = 0.5pt] {$x'$};

\node[text=blue] (textC) at (-7.5,-6) {C};
\node[text=red] (textB) at (0.5,-6.5) {B};
\node[text=black] (textA) at (-2,-8.5) {A};
\end{tikzpicture}

& \begin{tikzpicture}[x=0.4cm,y=0.4cm]
\path[clip,use as bounding box]  (3,-3) rectangle (-13.5,-12);

\node [stpoint] (u) at (-12.5,-10.5) {};
\node [stpoint] (v) at (1.5,-10.5) {};
\draw[name path = sphere,densely dotted] (v) +(100:5.5) arc (100:240:5.5);
\path[name path = larger_sphere] (v) +(100:6.5) arc (100:240:6.5);
\draw (v) node [stpoint] (w) at +(120:5.5) {};

\draw[name path = geodesic_uv] (u) .. controls (-9,-9.5) and (-2,-9.5) .. (v);
\draw[name path = geodesic_uw] (u) .. controls (-8,-9.5) and (-3,-9) .. (w);

\draw[name intersections = {of = geodesic_uv and sphere}] (intersection-1) node (x) [stpoint] {};
\draw[thick] (v) .. controls (0.5,-9.5) and (-1,-7) .. (w);

\draw (w) .. controls (-0.5,-3.5) and (-0.5,-2) .. (-1,-1);
\draw (v) .. controls (3.5,-11.5) and (4,-12.5) .. (4,-13.5);

\path (u) node[left=1pt] {$u$};
\path (v) node[above right=0.5pt] {$v$};
\path (w) node[left=1pt] {$w$};
\path (x) node[below left=0.5pt] {$x$};

\node [stpoint,fill=blue] (y) at (1.5,-6) {};
\draw[thick] (u) .. controls (-8.5,-9.5) and (-0.5,-9.5) .. (y);

\draw  (w) -- (y);\draw  (y) -- (v);
\node[stpoint] (y') at (0.5,-6.5) {};
\draw[thin] (u) .. controls (-8.5,-9.5) and (-1.5,-9) .. (y');

\path (y) node[right=0.5pt] {$y$};
\path (y') node[left=0.5pt] {$y'$};

\draw[blue,name path = sphere_y] (2.5,-10.5) arc (-77.4712:-240:4.6098);
\path[name path = help_line_radius] (-3.5,-4.5) -- (-2.5,-4.5);
\draw[name intersections = {of = sphere_y and help_line_radius}] (intersection-1) node (r) {};
\draw[dotted,blue] (y) -- (r) node[above,pos=0.5,sloped] {\tiny $2 \delta$};
\draw[blue] (y) -- (y');
\end{tikzpicture}
  \\
Regions. & Claim~\ref{claim:B-covered}, part 2. \\

\begin{tikzpicture}[x=0.4cm,y=0.4cm]
\path[clip,use as bounding box]  (3,-3) rectangle (-13.5,-12);

\node [stpoint] (u) at (-12.5,-10.5) {};
\node [stpoint] (v) at (1.5,-10.5) {};
\draw[name path = sphere,densely dotted] (v) +(100:5.5) arc (100:240:5.5);
\path[name path = larger_sphere] (v) +(100:6.5) arc (100:240:6.5);
\draw (v) node [stpoint] (w) at +(105:5.5) {};

\draw[name path = geodesic_uv] (u) .. controls (-7.5,-11) and (-1,-11.5) .. (v);
\draw[name path = geodesic_uw,thick] (u) .. controls (-2.5,-10) and (-0.5,-9) .. (w);
\draw[name intersections = {of = geodesic_uv and sphere}] (intersection-1) node (x) [stpoint] {};

\draw (v) .. controls (0.5,-9.5) and (0,-7.5) .. (w);
\draw (w) .. controls (-0.5,-3.5) and (-0.5,-2) .. (-1,-1);
\draw (v) .. controls (3.5,-11.5) and (4,-12.5) .. (4,-13.5);

\path (u) node[left=1pt] {$u$};
\path (v) node[above=1pt] {$v$};
\path (w) node[right=1pt] {$w$};
\path (x) node[below left=0.5pt] {$x$};

\node [stpoint] (y) at (-1,-6.5) {};
\draw (u) .. controls (-6.5,-9.5) and (-2.5,-7.5) .. (y);

\node[stpoint,fill=blue] (y') at (-1.25,-7.75) {};

\draw (u) .. controls (-8,-10) and (-3.5,-9) .. (y');
\path (y) node[left=1pt] {$y$};
\path (y') node[left=1pt] {$y'$};

\draw[name path = sphere_y,blue] (y') circle[radius=4.1];
\path[name path = help_line] (-5,-5) -- (-4.25,-5.75);
\draw[name intersections = {of = sphere_y and help_line}] (intersection-1) node (r) {};
\draw[dotted,blue] (y') -- (r) node[pos=0.7,above] {\tiny $2\delta$};

\draw  (y) edge (w);
\draw[blue] (y') .. controls (-0.5,-7) and (-0.25,-6) .. (w);
\draw[thick] (y') .. controls (-1,-9) and (0,-10.25) .. (v);

\draw[blue] (y) .. controls (-0.75,-8) and (0.25,-9.75) .. (v);
\end{tikzpicture}

&\begin{tikzpicture}[x=0.5cm,y=0.5cm]
\path[clip,use as bounding box]  (5,4) rectangle (-6.5,-5.5);

\node [stpoint] (x) at (-4.5,-4.5) {};
\node [stpoint] (v) at (4.5,-3.5) {};
\node [stpoint] (x') at (-5,-0.5) {};
\node [stpoint] (w) at (3.5,3.5) {};

\draw[name path = geodesic_xx] (x) .. controls (-5,-3.5) and (-5,-1.5) .. (x');
\draw[name path = geodesic_xv] (x) .. controls (-1.5,-3.5) and (2,-3) .. (v);
\draw[name path = geodesic_xw] (x') .. controls (-2,0) and (2,1.5) .. (w);
\draw[name path = geodesic_vw] (v) .. controls (3.5,-2) and (3,1.5) .. (w);

\path[name path = vertical_help] (-1.5,2.5) -- (0,-5);
\path[name path = horizontal_help] (-6,-3) -- (4.5,0);

\path[name intersections = {of = geodesic_xx and horizontal_help}]
  (intersection-1) node[stpoint,fill=blue] (p) {};
\path[name intersections = {of = geodesic_vw and horizontal_help}]
  (intersection-1) node[stpoint] (r) {};
\path[name intersections = {of = geodesic_xv and vertical_help}]
  (intersection-1) node[stpoint,fill=red] (s) {};
\path[name intersections = {of = geodesic_xw and vertical_help}]
  (intersection-1) node[stpoint] (q) {};

\draw[name path = geodesic_sq] (s) .. controls (-1,-2.5) and (-1.5,-1) .. (q);
\draw[name path = geodesic_pr] (p) .. controls (-2,-2) and (1.5,-1) .. (r);
\path[name intersections = {of = geodesic_sq and geodesic_pr}] (intersection-1) node[stpoint] (m) {};

\draw (x) -- (m);

\draw[name path = blue_sphere,rotate=30,blue]  (-6,0.5) ellipse (5 and 3);
\draw[name path = red_sphere,rotate=15,red]  (-1,-4) ellipse (5 and 3);
\path[name path = blue_help] (-4,1.5) -- (-4,0.5);
\path[name path = red_help] (3,-1) -- (2.5,-1.5);
\path[name intersections = {of = blue_sphere and blue_help}] (intersection-1) node (blue) {};
\path[name intersections = {of = red_sphere and red_help}] (intersection-1) node (red) {};

\draw[dotted,blue] (p) -- (blue) node[below,pos=0.5,sloped] {\tiny $2\delta$};
\draw[dotted,red] (s) -- (red) node[above,pos=0.5,sloped] {\tiny $2\delta$};

\path (x) node[left=0.5pt] {$x$};
\path (v) node[below=0.5pt] {$v$};
\path (w) node[right=0.5pt] {$w$};
\path (x') node[left=0.5pt] {$x'$};
\path (p) node[left=0.5pt] {$p$};
\path (q) node[above=0.5pt] {$q$};
\path (r) node[right=0.5pt] {$r$};
\path (s) node[below=0.5pt] {$s$};
\path (m) node[above right=0.3pt] {$m$};

\end{tikzpicture}
 \\
Claim~\ref{claim:C-covered}. & Claim~\ref{claim:A-covered}.\\
\end{tabular}
\caption{Illustration of the proof of Proposition~\ref{prop:twenty-three-balls}.}
\end{figure}

\begin{claimprop3} \label{claim:B-covered}
$\mathrm{diam}(B \cap S'_0)\le 2{\delta}$.
\end{claimprop3}

\begin{proof}   Pick any two points $y,y'\in B\cap S'_0 \le 2{\delta}$. If the
  triangles $\Delta(u,v,y)$ and $\Delta(u,v,y')$ are incomparable,
  i.e., $y\notin \Delta(u,v,y')$ and $y'\notin \Delta(u,v,y)$,
  then $[y,v]\cap [y',u]\ne \varnothing$ or $[y',v]\cap
  [y,u]\ne\varnothing$, say the first (this dichotomy follows from the
  fact that $\mathcal S$ is homeomorphic to ${\mathbb R}^2$). By
  Lemma~\ref{quadrangle}, $d(y,y')+d(u,v)\le d(y,v)+d(u,y')$. Since
  $d(y,v)\le 2{\delta}$ and $d(u,y')\le d(u,v)$ (by the choice of
  $v$), we conclude that $d(y,y')\le d(y,v)\le 2{\delta}$.

  Now, suppose that $y'\in \Delta(u,v,y)$. Since $[y,u]$ intersects
  $[v,w]$ and $d(u,y) \le d(u,v)$ by the choice of $v$, by
  Lemma~\ref{quadrangle} we have $d(y,w) \le d(v,w) = 2{\delta}$. Also
  $d(v,y) \le 2{\delta}$ because $y,y'\in S'_0$. Since $v, w \in
  B_{2{\delta}}(y)$, by the convexity of the ball $B_{2{\delta}}(y)$
  we conclude that $y' \in B_{2{\delta}}(y)$. Hence $d(y,y') \le
  2{\delta}$.  Consequently, $\mathrm{diam}(B\cap S'_0)\le 2{\delta}$.
\end{proof}

\begin{claimprop3} \label{claim:C-covered}
$\mathrm{diam}(C\cap S'_0)\le 2{\delta}$.
\end{claimprop3}

\begin{proof}   Pick any two points $y,y'\in C\cap S'_0$. Again, if the triangles
  $\Delta(u,v,y)$ and $\Delta(u,v,y')$ are incomparable, then we
  proceed as in the proof of Claim~\ref{claim:first}. Now suppose that
  $y'\in \Delta(u,v,y)$. Since $[y',v]$ intersects $[u,w]$ and
  $d(v,y') \le 2{\delta}$, $d(u,w)\le d(u,v)$, from
  Lemma~\ref{quadrangle} we deduce that $d(y',w) \le 2{\delta}$. Since
  $y'\in \Delta(u,v,y)\setminus \Delta(u,v,w)$, we conclude that
  $[y,v]\cap [y',w]\ne \varnothing$.  Again, by Lemma~\ref{quadrangle}
  $d(y,y')+d(v,w)\le d(y,v)+d(y',w)$. Since $d(v,w)=2\delta,$
  $d(y,v),d(y',w)\le 2{\delta}$, we immediately conclude that
  $d(y,y')\le 2\delta$.
\end{proof}

\begin{claimprop3} \label{claim:A-covered}
The set $A$ and consequently the set $S'_0 \cap A$ can be covered by
$4$ balls of radius ${\delta}$.
\end{claimprop3}

\begin{proof}   Recall that $A$ is a convex quadrilateral having all four sides
  $[x,x'],[x',w],[w,v],$ and $[v,x]$ of size at most $2{\delta}$. Let
  $p,q,r,$ and $s$ be the midpoints of $[x,x'],[x',w],[w,v],$ and
  $[v,x]$, respectively. By Proposition A(iii),   $d(p,r)\le
  2{\delta}$ and $d(q,s)\le 2{\delta}$. Let $m$ be the midpoint of
  $[q,s]$. Again, by Proposition A(iii),   $d(p,m)\le {\delta}$ and
  $d(m,r)\le {\delta}$. Since $d(m,q),d(m,s)\le {\delta}$, the
  geodesics $[m,p], [m,q],[m,r]$, and $[m,s]$ partition $A$ into four
  convex quadrilaterals with all sides at most ${\delta}$.

  We assert that $A$ is covered by the four ${\delta}$-balls centered
  at the points $p, q, r$ and $s$. Indeed, pick any point $z$ of
  $A$. Without loss of generality, we show that the quadrilateral with
  vertices $x,p,m,$ and $s$ is covered by $B_{\delta}(p)$ and
  $B_{\delta}(q)$. The geodesic $[x,m]$ splits this quadrilateral into
  two triangles $\Delta(x,p,m)$ and $\Delta(x,s,m)$. By convexity
  of balls, we have $\Delta(x,p,m) \subseteq B_{\delta}(p)$ and
  $\Delta(x,m,s) \subseteq B_{\delta}(s)$.
\end{proof}
\medskip

Summarizing, we conclude that $S'_0$ can be covered by $3+3+4 = 10$
balls of radius ${\delta}$. Analogously, the set $S''_0$ can be
covered by $10$ balls of radius ${\delta}$.  However, notice that the
ball $B_{\delta}(s)$ is counted in both coverings, thus $S\cap
B_{2{\delta}}(v)$ can be covered by $\constant$  balls of radius
${\delta}$. This finishes the proof of Proposition
~\ref{prop:twenty-three-balls}.
\end{proof}

\section{Open questions}

We conclude the paper with  three open questions.

\begin{question} \label{polygon}
Describe a polynomial time algorithm (in the number of sides and the
size of the packing) that, given a simple polygon $\mathcal P$ with
$n$ sides, constructs a covering and a packing of $\mathcal P$
satisfying the conditions of Corollary~\ref{cor:polygon}. Equivalently,
find a polynomial in $n$ algorithm (and maybe in the description of
$S$) to implement each step of the algorithm resulting from
Propositions 1-3: finding a covering of a closed subset $S$ of
$\mathcal P$ of diameter $\le 2\delta$ with at most 3 balls
(Proposition~\ref{prop:three-balls}) and the construction of the
regions $A,B,$ and $C$ in the proof of Proposition
~\ref{prop:twenty-three-balls}.
\end{question}

\begin{question} \label{polygon_holes}
Is it true that there exists a universal constant $c$ such that
$\rho(S)\le c\nu(S)$ for any compact (finite) subset of points of an
arbitrary polygon (with holes) endowed with the geodesic metric? Does
such a constant $c$ exist if diam$(S)\le 2\delta$, i.e., do polygons
with holes satisfy the weak-doubling property? The same questions can
be raised for polygons with holes on Busemann surfaces.
\end{question}

\begin{question} \label{busemann}
Is it true that the results of this note can be extended to all
2-dimensional Busemann spaces and, more generally, to all
$n$-dimensional Busemann spaces (in the latter case, the constant $c$
will depend of $n$)? The case of CAT(0) cube complexes (and, in particular, of
CAT(0) square complexes) is already interesting and nontrivial.
\end{question}

\bigskip\noindent{\bf Acknowledgments:} The authors would like to
thank the referees of this paper for careful reading of the previous 
versions and many useful remarks.

\bibliographystyle{plain}
\bibliography{covering}

\begin{thebibliography}{10}

\bibitem{AgMu}
Pankaj~K. {Agarwal} and Nabil~H. {Mustafa}.
\newblock {Independent set of intersection graphs of convex objects in 2D.}
\newblock {\em {Comput. Geom.}}, 34(2):83--95, 2006.

\bibitem{Al1}
N.~{Alon}.
\newblock {Piercing $d$-intervals.}
\newblock {\em {Discrete Comput. Geom.}}, 19(3):333--334, 1998.

\bibitem{Al}
Noga {Alon}.
\newblock {Covering a hypergraph of subgraphs.}
\newblock {\em {Discrete Math.}}, 257(2-3):249--254, 2002.

\bibitem{BaEdWo}
I.~{B\'ar\'any}, J.~{Edmonds}, and L.A. {Wolsey}.
\newblock {Packing and covering a tree by subtrees.}
\newblock {\em {Combinatorica}}, 6:221--233, 1986.

\bibitem{Be}
Claude {Berge}.
\newblock {\em {Hypergraphs. Combinatorics of finite sets. Transl. from the
  French.}}
\newblock Amsterdam etc.: North-Holland, 1989.

\bibitem{BoCh}
Glencora {Borradaile} and Erin~Wolf {Chambers}.
\newblock {Covering nearly surface-embedded graphs with a fixed number of
  balls.}
\newblock {\em {Discrete Comput. Geom.}}, 51(4):979--996, 2014.

\bibitem{Bou}
Nicolas Bousquet.
\newblock {\em {Hitting sets : VC-dimension and Multicut}}.
\newblock Theses, {Universit{\'e} Montpellier II - Sciences et Techniques du
  Languedoc}, December 2013.

\bibitem{BouTho}
Nicolas {Bousquet} and St\'ephan {Thomass\'e}.
\newblock {VC-dimension and Erd\H{o}s-P\'osa property.}
\newblock {\em {Discrete Math.}}, 338(12):2302--2317, 2015.

\bibitem{BrHa}
Martin~R. {Bridson} and Andr\'e {Haefliger}.
\newblock {\em {Metric spaces of non-positive curvature.}}
\newblock Berlin: Springer, 1999.

\bibitem{BrGo}
H.~{Br\"onnimann} and M.T. {Goodrich}.
\newblock {Almost optimal set covers in finite VC-dimension.}
\newblock {\em {Discrete Comput. Geom.}}, 14(4):463--479, 1995.

\bibitem{Ca}
Peter~J. {Cameron}.
\newblock {Problems from CGCS Luminy, May 2007.}
\newblock {\em {Eur. J. Comb.}}, 31(2):644--648, 2010.

\bibitem{ChChNa}
J\'er\'emie {Chalopin}, Victor {Chepoi}, and Guyslain {Naves}.
\newblock {Isometric embedding of Busemann surfaces into $L_1$.}
\newblock {\em {Discrete Comput. Geom.}}, 53(1):16--37, 2014.

\bibitem{ChHP}
Timothy~M. {Chan} and Sariel {Har-Peled}.
\newblock {Approximation algorithms for maximum independent set of
  pseudo-disks.}
\newblock {\em {Discrete Comput. Geom.}}, 48(2):373--392, 2012.

\bibitem{ChEs}
Victor {Chepoi} and Bertrand {Estellon}.
\newblock {Packing and covering $\delta $-hyperbolic spaces by balls.}
\newblock In {\em {Approximation, randomization, and combinatorial
  optimization. Algorithms and techniques. 10th international workshop, APPROX
  2007, and 11th international workshop, RANDOM 2007, Princeton, NJ, USA,
  August 20--22, 2007. Proceedings.}}, pages 59--73. Berlin: Springer, 2007.

\bibitem{ChEsVa}
Victor {Chepoi}, Bertrand {Estellon}, and Yann {Vaxes}.
\newblock {Covering planar graphs with a fixed number of balls.}
\newblock {\em {Discrete Comput. Geom.}}, 37(2):237--244, 2007.

\bibitem{ChFe}
Victor {Chepoi} and Stefan {Felsner}.
\newblock {Approximating hitting sets of axis-parallel rectangles intersecting
  a monotone curve.}
\newblock {\em {Comput. Geom.}}, 46(9):1036--1041, 2013.

\bibitem{Cl}
K.L. {Clarkson}.
\newblock {Nearest neighbor queries in metric spaces.}
\newblock {\em {Discrete Comput. Geom.}}, 22(1):63--93, 1999.

\bibitem{Co}
G\'erard {Cornu\'ejols}.
\newblock {\em {Combinatorial optimization. Packing and covering.}}
\newblock Philadelphia, PA: SIAM, Society for Industrial and Applied
  Mathematics, 2001.

\bibitem{CoFePe-LaSo}
Jos\'e {Correa}, Laurent {Feuilloley}, Pablo {P\'erez-Lantero}, and Jos\'e~A.
  {Soto}.
\newblock {Independent and hitting sets of rectangles intersecting a diagonal
  line: algorithms and complexity.}
\newblock {\em {Discrete Comput. Geom.}}, 53(2):344--365, 2015.

\bibitem{Du}
R.M. {Dudley}.
\newblock {\em {Uniform central limit theorems.}}
\newblock Cambridge: Cambridge University Press, 1999.

\bibitem{GuLe}
A.~{Gy\'arf\'as} and J.~{Lehel}.
\newblock {Covering and coloring problems for relatives of intervals.}
\newblock {\em {Discrete Math.}}, 55:167--180, 1985.

\bibitem{HaDe}
Hugo {Hadwiger} and H.~{Debrunner}.
\newblock {Kombinatorische Geometrie in der Ebene.}
\newblock {(Monographies de L'Enseignement math\'ematique. No. 2.) Gen\`eve:
  Institut de Math\'ematique, Universit\'e, 122 S. (1960).}, 1960.

\bibitem{Iv}
Sergei {Ivanov}.
\newblock {On Helly's theorem in geodesic spaces.}
\newblock {\em {Electron. Res. Announc. Math. Sci.}}, 21:109--112, 2014.

\bibitem{Bo}
K\'aroly jun. {B\"or\"oczky}.
\newblock {\em {Finite packing and covering.}}
\newblock Cambridge: Cambridge University Press, 2004.

\bibitem{Ka}
R.N. {Karasev}.
\newblock {Transversals for families of translates of a two-dimensional convex
  compact set.}
\newblock {\em {Discrete Comput. Geom.}}, 24(2-3):345--353, 2000.

\bibitem{KoTi}
A.N. {Kolmogorov} and V.M. {Tikhomirov}.
\newblock {$\varepsilon$-entropy and $\varepsilon$-capacity of sets in function
  spaces.}
\newblock {\em {Transl., Ser. 2, Am. Math. Soc.}}, 17:227--364, 1959.

\bibitem{KuMi}
Sanjeev~R Kulkarni.
\newblock On metric entropy, vapnik-chervonenkis dimension, and learnability
  for a class of distributions.
\newblock Technical report, DTIC Document, 1989.

\bibitem{Lo}
G.G. {Lorentz}.
\newblock {Metric entropy and approximation.}
\newblock {\em {Bull. Am. Math. Soc.}}, 72:903--937, 1966.

\bibitem{MuRa}
Nabil~H. {Mustafa} and Saurabh {Ray}.
\newblock {Improved results on geometric hitting set problems.}
\newblock {\em {Discrete Comput. Geom.}}, 44(4):883--895, 2010.

\bibitem{PaAg}
J\'anos {Pach} and Pankaj~K. {Agarwal}.
\newblock {\em {Combinatorial geometry.}}
\newblock New York, NY: John Wiley \& Sons, 1995.

\bibitem{Pa}
Athanase {Papadopoulos}.
\newblock {\em {Metric spaces, convexity and nonpositive curvature.}}
\newblock Z\"urich: European Mathematical Society Publishing House, 2005.

\bibitem{PoShRo}
R.~{Pollack}, M.~{Sharir}, and G.~{Rote}.
\newblock {Computing the geodesic center of a simple polygon.}
\newblock {\em {Discrete Comput. Geom.}}, 4(6):611--626, 1989.

\bibitem{Sch}
Alexander {Schrijver}.
\newblock {\em {Combinatorial optimization. Polyhedra and efficiency (volume
  B).}}
\newblock Berlin: Springer, 2003.

\bibitem{Va}
Vijay~V. {Vazirani}.
\newblock {\em {Approximation algorithms.}}
\newblock Berlin: Springer, 1999.

\bibitem{Vi}
Ivo Vigan.
\newblock Packing and covering a polygon with geodesic disks.
\newblock {\em arXiv preprint arXiv:1311.6033}, 2013.

\bibitem{We}
G.~{Wegner}.
\newblock {\"Uber eine kombinatorisch-geometrische Frage von Hadwiger und
  Debrunner.}
\newblock {\em {Isr. J. Math.}}, 3:187--198, 1965.

\end{thebibliography}

\end{document}